\newtheoremstyle{plainNoItalics}{}{}{\normalfont}{}{\bfseries}{.}{ }{}
\theoremstyle{plain}
\newtheorem{thm}{Theorem}[section]
\theoremstyle{plainNoItalics}
\newtheorem{lem}[thm]{Lemma}
\newtheorem{defn}[thm]{Definition}
\newtheorem{rem}[thm]{Remark}
\newtheorem{prop}[thm]{Proposition}
\newcommand{\beq}{\begin{equation}}
\newcommand{\eeq}{\end{equation}}
\newcommand{\beqa}{\begin{eqnarray}}
\newcommand{\eeqa}{\end{eqnarray}}
\newcommand{\bit}{\begin{itemize}}
\newcommand{\eit}{\end{itemize}}
\newcommand{\bedef}{\begin{defn}}
\newcommand{\edefn}{\end{defn}}
\newcommand{\bpro}{\begin{prop}}
\newcommand{\epro}{\end{prop}}
\newcommand{\df}{\partial}
\newcommand{\Dx}{\Delta x}
\newcommand{\Dt}{\Delta t}
\newcommand{\mP}{\mathcal P}
\newcommand{\eps}{\varepsilon}
\newcommand{\mD}{{\mathcal D}}
\newcommand{\Ox}{{\Omega_x}}
\newcommand{\Ov}{{\Omega_v}}
\newcommand{\xL}{{x_{i-\frac{1}{2}}}}
\newcommand{\xR}{{x_{i+\frac{1}{2}}}}
\newcommand{\iL}{{i-\frac{1}{2}}}
\newcommand{\iR}{{i+\frac{1}{2}}}
\newcommand{\testR}{{\phi}}   % test function for rho
\newcommand{\testQ}{{\varphi}}   % test function for q
\newcommand{\testG}{{\psi}}   % test function for g
\newcommand{\testU}{{\eta}}  % test function for u, which is the L^2 projection of q in the polynomial space
\newcommand{\bI}{{\bf{I}}}
\newcommand{\mK}{{\mathcal K}}
\newcommand{\bh}{{b_{h,v}}}
\newcommand{\lgl}{\langle}
\newcommand{\rgl}{\rangle}
\newcommand{\aaa}{\widehat{C}_{inv}||v^2||_\infty}
\newcommand{\bbb}{C_{inv}||v||_\infty}
\newcommand{\AP}{ {\eps,\Dt,h} }
\newcommand{\LIM}{ {\Dt,h} }
\newcommand{\tauzero}{ \widehat{\tau}_{\eps,h,0}  }
\newcommand{\tauone}{ \widehat{\tau}_{\eps,h,1}  }
\newcommand{\tautwo}{ \widehat{\tau}_{\eps,h,2}  }
\newcommand{\lamzero}{ \lambda_{0}}
\newcommand{\lamone}{ \lambda_{1} }
\newcommand{\lamtwo}{ \lambda_{2} }
\newcommand{\dtstabzero}{ \Dt_{\textrm{stab}, 0} }
\newcommand{\dtstabk}{ \Dt_{\textrm{stab}} }
\begin{document}
\title{Stability-enhanced AP IMEX1-LDG method: energy-based stability and rigorous AP property}
\author{Zhichao Peng\thanks{Department of Mathematical Sciences, Rensselaer Polytechnic Institute, Troy, NY 12180, U.S.A. {\tt pengz2@rpi.edu}}
   \and
   Yingda Cheng\thanks{Department of Mathematics, Department of  Computational Mathematics, Science and Engineering, Michigan State University,
               East Lansing, MI 48824 U.S.A.
          {\tt ycheng@msu.edu}. Research is supported by NSF grants  DMS-1453661 and DMS-1720023.}
   \and
Jing-Mei Qiu\thanks{Department of Mathematical Sciences, University of Delaware, Newark, DE, 19716, U.S.A.
 {\tt jingqiu@udel.edu}. 
 Research is supported by NSF grant DMS-1818924 and AFOSR grant 
 FA9550-18-1-0257.} 
   \and
 Fengyan Li\thanks{Department of Mathematical Sciences, Rensselaer Polytechnic Institute, Troy, NY 12180, U.S.A.
 {\tt lif@rpi.edu}. Research is supported by NSF grants DMS-1719942 and DMS-1913072.}}
\maketitle
\abstract{In our recent work \cite{peng2018stability}, a family of high order asymptotic preserving (AP) methods, termed as IMEX-LDG methods, are designed to solve some linear kinetic transport equations,  including the one-group transport equation in slab geometry and the telegraph equation,  in a diffusive scaling. As the Knudsen number $\varepsilon$ goes to zero, the limiting schemes are implicit discretizations to the limiting diffusive equation.  Both  Fourier analysis and numerical experiments imply the methods are unconditionally stable in the diffusive regime when  $\varepsilon\ll1$. 
In this paper, we develop an energy approach to establish the numerical stability of 
the IMEX1-LDG method, the sub-family of the methods that is first order accurate  in time and arbitrary order in space, for the model with general material properties.
Our analysis is the  first to simultaneously confirm  unconditional stability when $\varepsilon\ll 1$ and the uniform stability property with respect to $\varepsilon$.
 To capture the unconditional stability,  a novel discrete energy is introduced by better exploring the contribution of the scattering term in different regimes.  
A general form of the weight function, introduced to obtain the unconditional stability for $\varepsilon\ll 1$, is also for the first time considered in such stability analysis.
Based on the uniform stability,  a rigorous asymptotic analysis  is then carried out to show the AP property.  
}

\section{Introduction}
\label{sec0:intro}

In this paper, we continue our efforts in devising and advancing mathematical understanding of asymptotic preserving (AP) methods to solve time-dependent multi-scale kinetic transport equations within  the discontinuous Galerkin (DG) framework  \cite{AP1, jang2014analysis, peng2018stability}. Particularly,  we focus on establishing energy-type numerical stability and the  AP property for some methods  proposed  in  \cite{peng2018stability} for the following model equation,
 \begin{equation}
	\label{eq:mmd:LJ0}
	{\mP}^\eps: \qquad 
	\eps f_t + v \partial_x f = \frac{\sigma_s}{\eps}\left(\langle f\rangle - f\right)-\eps \sigma_a f,
\end{equation}
with periodic boundary conditions.  The function $f=f(x,v,t)$ is  the probability distribution  
function of the particles, with the space variable $x\in \Omega_x\subset {\mathbb R}$, velocity variable $v\in \Omega_v\subset {\mathbb R}$, and time  $t\geq 0$. 
 $\sigma_s(x)>0 $ and $\sigma_a(x)\geq 0$ are the scattering and absorption coefficients, respectively.
${\mathcal L}(f)=\lgl f\rgl-f$ defines a  scattering operator, where  $\langle f\rangle: = \int_{\Omega_v} f d \nu$  and  $\nu$ is a measure of the velocity space satisfying $\int_\Ov 1 d\nu=1$.   The parameter $\eps>0$ is the dimensionless Knudsen number, defined as the ratio of the mean free path of the particles over the characteristic length of the system.  The model \eqref {eq:mmd:LJ0} is in a diffusive scaling, and as $\eps\rightarrow 0$, it approaches its  diffusive limit
\begin{equation}
	\label{eq;mmd:LJ0_lim}
	{\mP}^0: \qquad 
\partial_t\rho=\langle v^2\rangle \partial_x (\partial_x \rho/\sigma_s)-\sigma_a \rho.
\end{equation}
Here $\rho= \lgl f \rgl$ is the  macroscopic density. Though seemingly simple, the equation in \eqref{eq:mmd:LJ0} provides
a prototype model to study many realistic problems such as in neutron transport or radiative transfer theory both numerically and mathematically.

To  simulate multi-scale models like that in   \eqref{eq:mmd:LJ0} effectively and reliably for a broad range of value for $\eps$, AP methods are widely recognized by the scientific community (see e.g. review papers  \cite{jin2010asymptotic, degond2011asymptotic}).  These methods  are designed for the  governing model with $\eps>0$. 
Additionally when $\eps\rightarrow0$, the methods become consistent  and stable discretizations for the limiting model as in \eqref{eq;mmd:LJ0_lim} even on under-resolved meshes with $\Dx, \Dt\gg \eps$.  Hence, AP methods provide  a natural transition of different regimes in multi-scale simulations.  AP methods usually involve some level of implicit treatment to deal with the stiffness of the model when $\eps\ll 1$.
%,  while balancing  to have reasonable computational cost. 
It is known that stability alone does not guarantee the scheme to capture the correct asymptotic limit \cite{caflisch1997uniformly, naldi1998numerical}.

 In our recent work \cite{peng2018stability}, a family of high order  AP methods,  termed as IMEX-LDG methods, are designed for \eqref{eq:mmd:LJ0}.   The methods are based on the reformulation of the equation, and involve local DG (LDG) discretization in space \cite{cockburn1998local}, globally stiffly accurate implicit-explicit (IMEX) Runge-Kutta (RK) methods in time \cite{boscarino2013imex}, and a judicially  chosen IMEX strategy. The reformulation has  two steps: micro-macro decomposition \cite{liu2004boltzmann,Lemou_No_BC}, and addition/subtraction of a $\omega$-weighted diffusive term \cite{boscarino2013imex}. The latter is introduced to obtain fully implicit limiting schemes as $\eps\rightarrow 0$, to achieve  unconditional stability of the methods in the diffusive regime with $\eps\ll 1$,  hence to  circumvent the otherwise stringent parabolic type time step condition in this regime, namely, $\Dt=O(\Dx^2)$, of  many AP schemes whose limiting schemes are explicit
 \cite{jin1998diffusive, klar1998asymptotic, Lemou_No_BC,  AP1}. Using globally stiffly accurate IMEX RK methods in time, 
 and LDG methods in space with suitable numerical fluxes,
  the IMEX-LDG methods project the numerical solutions to the local equilibrium at both inner stages and full RK steps in the limit of  $\eps\rightarrow 0$, and this is important for the AP property and seemingly also for  accuracy (see appendix of \cite{peng2018stability}).  
 In \cite{peng2018stability},  unconditional stability in the diffusive regime is observed numerically, and is confirmed by a Fourier-type stability analysis applied to the  two-velocity telegraph equation with $\Ov=\{-1, +1\}$, and constant material properties $\sigma_s=1$, $\sigma_a=0$. 
 
In this work we restrict our attention to the IMEX1-LDG method, the sub-family of the methods in  \cite{peng2018stability} that  is  first order accurate  in time and arbitrary order in space, and examine it systematically for the model with the general 
 material properties, namely with the spatially varying scattering  and absorption coefficients  $\sigma_s(x)$ and $\sigma_a(x)$. Our main objectives are two-fold.  
The first is to establish unconditional stability in the diffusive regime with $\eps\ll1$ {\em as well as}  uniform stability with respect to $\eps$.  By following an energy approach as in \cite{liu2010analysis, jang2014analysis}, one can get uniform stability yet fails to capture the unconditional stability for $\eps\ll 1$. Note the methods examined in \cite{liu2010analysis, jang2014analysis} in the limit of $\eps\rightarrow 0$ are explicit. We instead propose and work with a new  notion of $\mu$-stability, and get the stability we want by better exploring the contribution of the scattering operator. 
The stability results up to this point depend on  a parameter $\mu$. An intricate algebraic-based optimization  with respect to the admissible $\mu$ is  subsequently followed, to further maximize the unconditional stability region, while  also maximizing the allowable time step size in the regime when the method is  conditionally stable.  
 As our second objective, a rigorous asymptotic analysis is proved to show the AP property based on the uniform stability. 
 To our best knowledge, our analysis is the first to capture unconditional stability when $\eps\ll 1$ along with uniform stability property for the model \eqref{eq:mmd:LJ0} with general material properties. A general form of  the weight function $\omega$ is also for the first time considered in such stability analysis. 
In this work, we keep the velocity variable continuous, and our analysis can be easily adapted  when the velocity variable is further discretized such as by discrete ordinates or $P_N$ methods \cite{pomraning1973equations}.  Our analysis can also be extended to AP methods with the same IMEX strategy yet with other spatial discretizations, as long as they satisfy some key properties, such as the adjoint property in \eqref{eq:lem1:1} (also see Lemma 3.5 
in \cite{peng2018stability})
and the stabilization as in \eqref{eq:upwind} due to the upwind treatment.   Though not presented here,   {\em a priori} error estimates can follow similarly as in \cite{jang2014analysis}, and they are uniform in $\eps$ for  smooth enough solutions with uniform bounds in $\eps$ under the relevant Sobolev norms. What seems to be more challenging and left to our  future endeavor
is to obtain  the stability analysis for IMEX-LDG methods with higher order temporal accuracy.

Finally we want to briefly review some related  literature especially in establishing numerical stability 
of AP methods for kinetic transport models in a diffusive scaling.  One commonly used approach is Fourier type analysis. 
For the telegraph equation with $\Ov=\{-1, +1\}$,  an analytical time step condition is given in \cite{Lemou_No_BC} via Fourier analysis to ensure uniform  $L^2$-stability of a first order finite difference AP method, while in \cite{peng2018stability}, necessary conditions on $\eps, \Dx, \Dt$  are obtained numerically for the $p$-th order IMEX-LDG AP scheme ($p=1, 2, 3$) to ensure an $L^2$ energy non-increasing in time. The results seem to be uniform in $\eps$, with  unconditional stability captured for $\eps\ll 1$.    
Klar and Unterreiter in \cite{klar2002uniform} considered a formally first-order in time and second-order in space  AP scheme for the one-group transport equation with $\Ov=[-1, 1]$ and established uniform stability by first establishing the result in Fourier space and then transforming  it back to the physical space.  Their analysis assumes the $H^1$ smoothness of the initial data.  It is known that Fourier-type  analysis requires uniform meshes and the models being linear and constant-coefficient. Energy-based stability analysis on the other hand does not pose these restrictions, yet they are not always easy to get. In \cite{liu2010analysis}, Liu and Mieussens revisited the first order AP method in \cite{Lemou_No_BC} for a more general kinetic transport model and proved uniform stability following an energy approach. A similar analysis
is carried out in \cite{jang2014analysis} for the first order in time DG-IMEX1 method in \cite{AP1}. Based on the uniform stability analysis, error estimates and rigorous asymptotic analysis are also established in \cite{jang2014analysis}.  
In both \cite{peng2018stability} and here in this work, we want to capture the unconditional stability in the diffusive regime in addition to the uniform stability. Few other  theoretical works, among many, for AP methods include uniform consistency \cite{caflisch1997uniformly, klar1998asymptotic}, uniform convergence \cite{golse1999convergence,filbet2013analysis} based on the commuting diagram of AP schemes (see Fig 1.1 in \cite{golse1999convergence}), and a recent work on uniform accuracy with   IMEX multi-step methods \cite{hu2019uniform}.

The remaining of the paper is organized as follows.  In Section \ref{sec1:scheme}, we review and extend  the IMEX1-LDG method in \cite{peng2018stability}  to our model \eqref{eq:mmd:LJ0} with general material properties.  
Section \ref{sec3:stab} presents  main results on numerical stability.   Here several theorems, including Theorem \ref{thm:stab_nmu} and Theorem \ref{thm:mu}, are stated to obtain uniform stability, while capturing the unconditional stability in the diffusive regime.  
An optimization step is carried out in Theorem \ref{thm:stab} to find the best value of the parameter $\mu$ in the notion of $\mu$-stability in order to optimize  the stability results. Once uniform stability is available, the AP property of the method is stated in Theorem \ref{thm:AP} in Section \ref{sec:ap}. The proofs of all major theorems are presented in Sections \ref{sec:stab:proofs}-\ref{sec:ap:proofs} for better readability.
%, followed by concluding remarks in Section \ref{sec:summary}.

\section{The IMEX1-LDG scheme }
\label{sec1:scheme}

In this section, we will review the IMEX1-LDG method proposed in \cite{peng2018stability} and extend it more systematically to the model \eqref{eq:mmd:LJ0} with general material properties $\sigma_s(x)$ and $\sigma_a(x)$, both  being in $L^\infty(\Omega_x)$ and satisfying $\sigma_M\geq \sigma_s(x)\geq\sigma_m >0, \sigma_a(x)\geq 0, \forall x\in\Ox$. The boundary conditions in space are periodic, 
and the velocity variable $v$ will not be discretized.

Two examples of the model \eqref{eq:mmd:LJ0} will be examined.  One is the one-group transport equation in slab geometry. Here $\Omega_v=[-1,1]$ and the measure $\nu$ is defined as
$
\int_{\Omega_v} f d\nu=\frac{1}{2}\int_{\Omega_v} f(x,v,t) dv,
$
with $dv$ being the standard Lebesgue measure. The other is the telegraph equation  with $\Omega_v=\{-1,1\}$, and $\nu$ is  a discrete measure, given as
$
\int_{\Omega_v}f d\nu=\frac{1}{2}\left(f(x,v=1,t)+f(x,v=-1,t)\right).
$
 There is little difference in  the formulation and analysis of the IMEX1-LDG method for both examples.

\subsection{Reformulation}
\label{sec1.1:reformat}

The IMEX1-LDG method is defined based on a reformulation of \eqref{eq:mmd:LJ0}, which is obtained in several steps. As the first step, we rewrite the model into its  micro-macro decomposition  \cite{liu2004boltzmann, Lemou_No_BC}. Let $L^2(\Omega_v,\nu)$ be the square integrable space in $v$, with the inner product $\langle f , g  \rangle :=  \langle fg \rangle$.  Let $\Pi$ be the $L^2$ projection onto  $\textrm{Null}({\mathcal L})=\text{Span} \{1\}$, $\mathbf{I}$  be the identify operator, and  $\rho:=  \langle f \rangle=\Pi f$  be the macroscopic density. Then $f$ can be decomposed orthogonally into $f = \rho+\varepsilon g$, with $\rho$ and $g$ satisfying 
\begin{subequations}
\label{eq:mmd:LJ}
\begin{align}
&\partial_t\rho+\partial_x \langle vg \rangle  =-\sigma_a \rho,
\label{eq:mmd:LJ:1} \\
	&\partial_t g  +\frac{1}{\varepsilon}  (\mathbf{I}-\Pi)(v\partial_xg) +\frac{1}{\varepsilon^2 }v\partial_x\rho = -\frac{\sigma_s}{\varepsilon^2 }g-\sigma_a g. \label{eq:mmd:LJ:2}
\end{align}
\end{subequations}
This is the micro-macro decomposition.
As $\eps\rightarrow 0$, the equations \eqref{eq:mmd:LJ} formally become
\begin{equation}
\label{eq:limit:diff:1stOrder}
\partial_t\rho+\partial_x \langle vg \rangle  =-\sigma_a\rho,\quad  \sigma_s g=- v\partial_x\rho,
\end{equation}
which is a first order form of the limiting diffusion equation,
\begin{equation}
\partial_t\rho=\langle v^2\rangle\partial_x\left(\partial_x\rho/\sigma_s \right)-\sigma_a\rho,
\label{eq:limit:diff}
\end{equation}
equipped with the compatible  initial condition.
The relation $\sigma_s g=-v\df_x\rho$ in \eqref{eq:limit:diff:1stOrder} will be referred to as   the {\em local equilibrium}.  For the telegraph equation, the diffusion constant is $\lgl v^2\rgl=1$, while for the one-group transport equation in slab geometry,  $\langle v^2\rangle=1/3$.

As the second step,  
a weighted diffusion term, $\omega \langle v^2\rangle  \partial_x ( \partial_x\rho/\sigma_s)$, is added  to both sides of \eqref{eq:mmd:LJ:1}, leading to 
\begin{subequations}
\label{eq:mmd:DIFF}
	\begin{align}
	&\partial_t\rho+\partial_x \langle vg \rangle+\omega \langle v^2\rangle \partial_x\left( \partial_x\rho/\sigma_s\right)  =\omega \langle v^2\rangle \partial_x\left(\partial_x\rho/\sigma_s\right)-\sigma_a\rho,\\
	&\partial_t g  +\frac{1}{\varepsilon}  (\mathbf{I}-\Pi)(v\partial_xg) +\frac{1}{\varepsilon^2 }v\partial_x\rho = -\frac{\sigma_s}{\varepsilon^2 }g-\sigma_a g.
	\end{align}
\end{subequations}
Here the weight function $\omega$ is non-negative and bounded. It is {\em independent} of $x$ and can depend on $\eps$, satisfying
\begin{equation}
\omega\rightarrow1, \quad \text{as} \quad \eps\rightarrow 0.
\label{w:prop:0}
\end{equation}
Additional properties desired for $\omega$ in general and considered  specifically in this work will be discussed in next subsection. 
The idea of reformulating a kinetic transport model in the diffusive scaling based on adding and subtracting a diffusive term was previously used in  \cite{boscarino2013imex} and \cite{dimarco2014implicit} to remove some parabolic stiffness in designing  AP schemes. One advancement we made in \cite{peng2018stability} and here is to improve the mathematical understanding of the  desired property and the role of the weight function  $\omega$, and such advancement can guide one to choose $\omega$  in practice.

With  the auxiliary variables $q=\partial_x \rho$ and $u=q/\sigma_s$, the  system \eqref{eq:mmd:DIFF} can also be written in its first order form
\begin{subequations}
\label{eq:mmd:Q}
	\begin{align}
	& q=\partial_x \rho, \qquad 	u=q/\sigma_s,  \label{eq:mmd:Q:0}\\
	& \partial_t\rho+\partial_x \langle v(g+\omega vu) \rgl=\omega \langle v^2\rangle \partial_x u -\sigma_a\rho,
 \label{eq:mmd:Q:1} \\
	&\partial_t g  +\frac{1}{\varepsilon}  (\mathbf{I}-\Pi)(v\partial_xg) +\frac{1}{\varepsilon^2 }v\partial_x\rho = -\frac{\sigma_s}{\varepsilon^2 }g-\sigma_a g,\label{eq:mmd:Q:2} 
	\end{align}
\end{subequations}
and correspondingly  its limiting system as  $\eps\rightarrow 0$ now is 
\beq
\label{eq:AP1:3.1.1}
\partial_t\rho=\langle v^2\rangle \partial_x u-\sigma_a \rho,\quad q=\partial_x \rho=\sigma_s u, \quad g=-vq/\sigma_s=-vu.
\eeq
The property \eqref{w:prop:0} has been  used. The introduction of $u$ is to deal with the spatially varying scattering coefficient $\sigma_s$. 
Note that the term $v\partial_x\rho$ in \eqref{eq:mmd:Q:2} can be replaced by $vq$.

\subsection{The IMEX1-LDG scheme}
\label{sec:num_sch}

To present the scheme,  we start with some notation.
For the computational domain $\Omega_x=[x_L, x_R]$ in space, a mesh,
 $x_L=x_{\frac{1}{2}}<x_{\frac{3}{2}}<\dots<x_{N+\frac{1}{2}}=x_R$, is introduced. Let
$I_i=[\xL, \xR]$ be an element, with $x_i$ as its center and $h_i$ as its length. Set $h=\max_i h_i$. ($\Dx$ in the introduction is just $h$ here.) For any nonnegative integer $k$, we define a finite dimensional discrete space
\begin{equation}
U_h^k=\left\{u\in L^2(\Omega_x): u|_{I_i}\in P^k(I_i), \forall i\right\},
\label{eq:DiscreteSpace:1mesh}
\end{equation}
where the local space $P^k(I)$ consists of polynomials of degree at most $k$ on $I$.  
We also introduce 
\begin{equation}
G_h^k=\left\{u(\cdot, v) \in U_h^k: \;\; \int_{\Omega_v}\int_{\Omega_x} |u(x,v)|^2dxdv<\infty\right\}.
\label{eq:DSpace:g}
\end{equation}
For a function $\phi\in U_h^k$, we write  $\phi(x^\pm)=\lim_{\Delta x\rightarrow 0^\pm} \phi(x+\Delta x)$,
and $\phi^\pm_\iR=\phi(x^\pm_\iR)$. The jump and average of $\phi$ at $x_\iR$ are defined as 
$[\phi]_\iR={\phi_\iR^+-\phi_\iR^-}$ and $\{\phi\}_\iR=\frac12(\phi_\iR^++\phi_\iR^-)$, respectively.

The IMEX1-LDG scheme in \cite{peng2018stability} involves a LDG  discretization in space and a first order globally stiffly accurate IMEX RK scheme in time. And an  IMEX strategy is adopted so that all the terms,  which are formally dominating in the regime $\eps\ll 1$, are treated implicitly. 
The IMEX1-LDG scheme for the model with a general $\sigma_s$ is based on the system \eqref{eq:mmd:Q}, and it is defined as below.
 Given  $\rho_h^{n}, \;q_h^{n},\;u_h^{n} \in U_h^k$, $g_h^{n}\in G_h^k$  
 that approximate the solution $\rho$, $q=\partial_x\rho$, $u$,
 and $g$ at $t^n$, we look for 
 $\rho_h^{n+1}, \;q_h^{n+1},\;u_h^{n+1} \in U_h^k$, $g_h^{n+1}\in G_h^k$
 at $t^{n+1}=t^n+\Dt$, such that $\forall\;   \testQ, \testU, \testR \in U_h^k$ and $ \testG\in G_h^k$,
\begin{subequations}
\label{eq:FDG:1T}
\begin{align}
\label{eq:FDG:1T:a}
&(q_h^{n+1},\testQ)+d_h(\rho_h^{n+1},\testQ)=0,\\
\label{eq:FDG:1T:c}
&(\sigma_s u_h^{n+1},\testU)=(q_h^{n+1},\testU),\\
\label{eq:FDG:1T:b}
&\big(\frac{\rho_h^{n+1}-\rho_h^n}{\Dt}, \testR\big)+l_h(\lgl v( g_h^n+\omega vu_h^n)\rgl,\testR)=\omega \langle v^2\rangle l_h(u_h^{n+1},\testR)
-\left(\sigma_a \rho_h^{n+1},\testR\right),\\
\label{eq:FDG:1T:d}
&\big(\frac{g_h^{n+1}-g_h^n}{\Dt},\testG\big)+\frac{1}{\eps} b_{h,v}(g_h^n,\testG)-\frac{v}{\eps^2} d_h(\rho_h^{n+1},\testG)
=-\frac{1}{\eps^2}(\sigma_s g_h^{n+1},\testG)
-\left(\sigma_a g_h^{n+1},\testG\right).
\end{align}
\end{subequations}
Here $(\cdot, \cdot)$ is the standard inner product for $L^2(\Omega_x)$.  
The bilinear forms $d_h, l_h$, and $b_{h, v}$ are all related to discrete spatial derivatives, and defined as
\begin{subequations}
\label{eq:bilinear:def}
\begin{align}
\label{eq:dh}
d_h(\rho_h, \testQ)&=\sum_i\int_{I_i} \rho_h \df_x\testQ dx + \sum_i \breve{\rho}_{h,\iL}[\testQ]_\iL , \\
\label{eq:lh}
l_h(u_h,\testR)&=-\sum_i \int_{I_i} u_h \partial_x\testR dx-\sum_i \hat u_{h,\iL} [\testR]_\iL,
\\
\label{eq:bh}
b_{h,v}(g_h,\testG)&=((\bI-\Pi)\mD_h(g_h; v), \testG) =(\mD_h(g_h; v) - \langle\mD_h(g_h; v)\rangle, \testG).
\end{align}
\end{subequations}
For a given $v\in \Omega_v$, the function $\mD_h(g_h; v)\in U_h^k$ in \eqref{eq:bh} is an upwind DG discretization of  the  transport term $v \df_x g$. It is determined by 
\begin{align}
(\mD_h(g_h; v), \testG)
=-\sum_i\left(\int_{I_i} vg_h \df_x\testG dx\right) - \sum_i \widetilde{(vg_h)}_\iL[\testG]_\iL, \quad \forall \psi\in U_h^k,
\label{eq:mD}
\end{align}
where $\widetilde{vg}$ is the upwind  flux,
\beq
\label{eq:vg:upwind:L-1}
\widetilde{vg}:=
\left\{
\begin{array}{ll}
v g^-,&\mbox{if}\; v>0\\
v g^+,&\mbox{if}\; v<0
\end{array}
\right.
=v\{g\}-\frac{|v|}{2}[g].
\eeq

The terms $\breve{\rho}$ and $\hat{u}$ in \eqref{eq:dh}-\eqref{eq:lh}  are one of the following alternating flux pair,
\beq 
\label{eq:flux}
\textrm{right-left: } \;\;\;\; \breve{\rho} = {\rho}^+,\;\;\hat{u} = u^-;\qquad
\textrm{left-right: } \;\;\;\;
\breve{\rho}={\rho}^-,\;\;\hat{u} = {u}^+. 
\eeq
The choice of the numerical fluxes $\breve{\rho}$ and $\hat{u}$ is important for the numerical solution to stay close to the local equilibrium when $\eps\ll 1$, and it contributes to the AP property of the scheme. Similar as in standard LDG methods, the auxiliary unknowns  $q_h$  and $u_h$  can be locally represented hence eliminated in terms of $\rho_h$. 

At $t=0$, the initialization is done via the $L^2$ projection $\pi_h$ onto $U_h^k$, namely, 
\beq
\label{eq:init0}
\rho_h^0(\cdot)=\pi_h \rho(\cdot, 0), \quad g_h^0(\cdot,v)=\pi_h g(\cdot,v,0),
\quad u_h^0(\cdot,v)=\pi_h (\sigma_s^{-1} \partial_x\rho).
\eeq

To complete the formulation of the scheme, one needs to specify the weight function $\omega$.  In our previous work \cite{peng2018stability}, Fourier-type stability analysis suggests 
that  $\omega$ should be chosen in the form of $\omega=\omega(\frac{\eps}{h},\frac{\eps^2}{\Dt})$, to preserve the intrinsic scale of the underlying model. 
In this paper, we only consider  $\omega=\omega(\eps/(\sigma_m h))$,  which is independent of $\eps^2/\Dt$. Some specific examples include $\omega=\exp\big({-\eps}/({\sigma_m h}) \big)$ and $\omega\equiv 1$. One can also use a piecewise constant choice $\omega={\bf 1}_{\{ \eps/(\sigma_m h) \leq \alpha\}}$, with some fixed positive constant  $\alpha$, see Remark \ref{rem:stab:w} for a specific choice of  $\alpha$ recommended by our stability analysis. (Here  ${\bf 1}_D$ is an indicator function with respect to a set $D$.) Note that all these choices are non-negative and independent of $x$, satisfying \eqref{w:prop:0}.  

The next lemma states the relation of bilinear forms $d_h$ and $l_h$, and this can be verified directly.
\begin{lem}
\label{lem:1}
With either alternating flux pair in \eqref{eq:flux},  the bilinear forms $b_h$ and $l_h$ are related, 
\begin{align}
l_h(\testQ,\testR)= d_h(\testR,\testQ),  \qquad \forall \varphi, \;  \phi\in U_h^k.
\label{eq:lem1:1}
\end{align}
\end{lem}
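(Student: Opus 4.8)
The identity in \eqref{eq:lem1:1} is purely algebraic, so the plan is to expand both sides from the definitions \eqref{eq:dh} and \eqref{eq:lh} and match terms. Starting from the right-hand side,
\[
d_h(\testR,\testQ)=\sum_i\int_{I_i}\testR\,\df_x\testQ\,dx+\sum_i \breve{\testR}_{h,\iL}[\testQ]_\iL,
\]
while the left-hand side is
\[
l_h(\testQ,\testR)=-\sum_i\int_{I_i}\testQ\,\df_x\testR\,dx-\sum_i\widehat{\testQ}_{h,\iL}[\testR]_\iL.
\]
First I would handle the volume terms: on each element $I_i$ integration by parts gives $\int_{I_i}\testR\,\df_x\testQ\,dx=-\int_{I_i}\testQ\,\df_x\testR\,dx+\big[\testQ\testR\big]_{\xL}^{\xR}$, so after summing over $i$ the two volume integrals agree up to the collected boundary contributions $\sum_i\big(\testQ\testR\big|_{\iR^-}-\testQ\testR\big|_{\iL^+}\big)$. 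Reindexing this telescoping sum and using periodicity, it can be rewritten as a sum over cell interfaces $x_\iL$ of a combination of the one-sided traces of $\testQ$ and $\testR$.

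The remaining task is to verify that this leftover interface sum, together with the numerical-flux boundary term $\sum_i\breve{\testR}_{h,\iL}[\testQ]_\iL$ from $d_h(\testR,\testQ)$, exactly cancels the numerical-flux term $-\sum_i\widehat{\testQ}_{h,\iL}[\testR]_\iL$ from $l_h(\testQ,\testR)$. This is where the specific structure of the alternating flux pair \eqref{eq:flux} enters: for the right-left choice $\breve{\testR}=\testR^+$, $\widehat{\testQ}=\testQ^-$, and for the left-right choice the roles swap. In each case one writes the interface contribution at a fixed node $x_\iL$ using the identities $\testQ^+\testR^+-\testQ^-\testR^-=\testR^+[\testQ]+\testQ^-[\testR]$ (for the right-left pair) or the mirror identity with $+$ and $-$ swapped (for the left-right pair), which is precisely the algebraic fact that makes the cancellation work. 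After this substitution every interface term matches and the equality follows.

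I do not expect any genuine obstacle here — the statement is flagged in the paper as verifiable "directly." The only point requiring a little care is bookkeeping: getting the reindexing of the telescoping boundary sum right, keeping track of which trace ($+$ or $-$) sits at $x_\iL^+$ versus $x_\iL^-$, and confirming that the periodic boundary conditions make the wrap-around term vanish. One writes it once for the right-left flux pair and notes the left-right case is symmetric under interchanging the $\pm$ superscripts throughout.
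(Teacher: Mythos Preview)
Your proposal is correct and is precisely the direct verification the paper alludes to (the paper gives no separate proof beyond the remark that the identity ``can be verified directly''). The integration-by-parts step, the periodic reindexing, and the interface identity $\testQ^+\testR^+-\testQ^-\testR^-=\testR^+[\testQ]+\testQ^-[\testR]$ (and its mirror) are exactly what is needed, so there is nothing to add.
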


The unique solvability of the solution to the IMEX1-LDG method is given in next  proposition, together with some properties in \eqref{eq:prop:1} that can be easily  verified.
 The key to prove the first part of the proposition is the unique solvability of the problem examined in Lemma \ref{lem:sol}. 
\begin{prop}\label{prop:sol}
The IMEX1-LDG method is uniquely solvable for any $\eps\geq 0$. In addition, the solution satisfies
\begin{equation}
\langle g_h^{n} \rangle=0, \; \forall n\geq 0,\qquad 
(\sigma_s u_h^m, \testU)=-l_h(\testU, \rho_h^m), \; \forall \testU\in U_h^k, \;\; \forall m\geq 1.
\label{eq:prop:1}
\end{equation}
\end{prop}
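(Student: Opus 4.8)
\textbf{Proof proposal for Proposition~\ref{prop:sol}.}
The plan is to treat the two assertions separately, dealing first with the algebraic identities in \eqref{eq:prop:1}, which are essentially bookkeeping, and then with the unique solvability, which is the substantive part. For $\langle g_h^n\rangle=0$: by the definition of $b_{h,v}$ in \eqref{eq:bh}, the quantity $\mD_h(g_h;v)-\langle\mD_h(g_h;v)\rangle$ lies in the range of $\mathbf I-\Pi$, so $\langle b_{h,v}(g_h^n,\cdot)\rangle$-type contributions vanish when we test \eqref{eq:FDG:1T:d} against a $v$-independent test function; more precisely, I would take $\testG\equiv 1$ (a legitimate element of $G_h^k$ on each $I_i$, restricted to constants) in \eqref{eq:FDG:1T:d}, apply $\langle\cdot\rangle$, and use that $\langle(\mathbf I-\Pi)(\cdot)\rangle=0$ and $\langle v\rangle=0$ (so the $d_h(\rho_h^{n+1},\cdot)$ term with the factor $v$ integrates to zero against a constant) together with $\langle v^2\rangle$ being a constant. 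This leaves $\langle g_h^{n+1}\rangle=\langle g_h^n\rangle$ up to the $\sigma_s,\sigma_a$ terms; after verifying the scattering/absorption terms also produce only $\langle g_h^{n+1}\rangle$ on the right, one gets a linear recursion forcing $\langle g_h^{n+1}\rangle=\langle g_h^n\rangle$, and the initialization \eqref{eq:init0} gives $\langle g_h^0\rangle=\langle\pi_h g(\cdot,v,0)\rangle=\pi_h\langle g(\cdot,\cdot,0)\rangle=0$ since $g=\frac1\eps(f-\rho)$ has zero $v$-average. Induction then closes this part. For the second identity, I would simply combine \eqref{eq:FDG:1T:c}, which reads $(\sigma_s u_h^{n+1},\testU)=(q_h^{n+1},\testU)$, with \eqref{eq:FDG:1T:a}, which gives $(q_h^{n+1},\testU)=-d_h(\rho_h^{n+1},\testU)$, and then invoke Lemma~\ref{lem:1} to rewrite $d_h(\rho_h^{n+1},\testU)=l_h(\testU,\rho_h^{n+1})$; the claim holds for all $m\ge1$ because \eqref{eq:FDG:1T:a} and \eqref{eq:FDG:1T:c} hold at every time level $m\ge1$.

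For unique solvability, since the scheme \eqref{eq:FDG:1T} is a finite-dimensional square linear system for $(\rho_h^{n+1},q_h^{n+1},u_h^{n+1},g_h^{n+1})$, it suffices to show the homogeneous system (all data at time $t^n$ and all source terms set to zero) has only the trivial solution. First I would use \eqref{eq:FDG:1T:a} and \eqref{eq:FDG:1T:c} to locally eliminate $q_h^{n+1}$ and $u_h^{n+1}$ in terms of $\rho_h^{n+1}$, exactly as in standard LDG: $q_h^{n+1}$ is determined by $(q_h^{n+1},\testQ)=-d_h(\rho_h^{n+1},\testQ)$ and $u_h^{n+1}$ by $(\sigma_s u_h^{n+1},\testU)=(q_h^{n+1},\testU)$, both well-posed since the mass matrices (weighted by $1$ and by $\sigma_s\ge\sigma_m>0$ respectively) are invertible. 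This reduces the problem to the coupled pair \eqref{eq:FDG:1T:b}--\eqref{eq:FDG:1T:d} in $(\rho_h^{n+1},g_h^{n+1})$. The key algebraic maneuver is an energy test: set $\testR=\rho_h^{n+1}$ in \eqref{eq:FDG:1T:b} and $\testG=\eps^2 g_h^{n+1}$ in \eqref{eq:FDG:1T:d}, then add. The cross terms should telescope: the term $-\frac{v}{\eps^2}d_h(\rho_h^{n+1},g_h^{n+1})$ tested appropriately pairs against $l_h(\langle v(\cdots)\rangle,\rho_h^{n+1})$ via Lemma~\ref{lem:1} — but in the homogeneous problem with zero time-$n$ data, the explicit terms $\langle v(g_h^n+\omega v u_h^n)\rangle$ vanish, so \eqref{eq:FDG:1T:b} reduces to $\frac1{\Dt}(\rho_h^{n+1},\testR)=\omega\langle v^2\rangle l_h(u_h^{n+1},\testR)-(\sigma_a\rho_h^{n+1},\testR)$. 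Using the already-derived local representation $(\sigma_s u_h^{n+1},\testU)=-l_h(\testU,\rho_h^{n+1})$ and Lemma~\ref{lem:1}, the term $\omega\langle v^2\rangle l_h(u_h^{n+1},\rho_h^{n+1})=\omega\langle v^2\rangle d_h(\rho_h^{n+1},u_h^{n+1})=-\omega\langle v^2\rangle(\sigma_s u_h^{n+1},u_h^{n+1})\le 0$, so testing \eqref{eq:FDG:1T:b} by $\rho_h^{n+1}$ alone gives $\frac1{\Dt}\|\rho_h^{n+1}\|^2+(\sigma_a\rho_h^{n+1},\rho_h^{n+1})+\omega\langle v^2\rangle(\sigma_s u_h^{n+1},u_h^{n+1})=0$, each term nonnegative, forcing $\rho_h^{n+1}=0$ (and $u_h^{n+1}=0$ if $\omega\langle v^2\rangle>0$). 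With $\rho_h^{n+1}=0$, equation \eqref{eq:FDG:1T:d} becomes $\frac1{\Dt}(g_h^{n+1},\testG)+\frac1\eps b_{h,v}(g_h^{n},\testG)=-\frac1{\eps^2}(\sigma_s g_h^{n+1},\testG)-(\sigma_a g_h^{n+1},\testG)$ with $g_h^n=0$, and testing by $g_h^{n+1}$ gives $\frac1{\Dt}\|g_h^{n+1}\|^2_{L^2(\Omega_x\times\Omega_v)}+\frac1{\eps^2}\|\sqrt{\sigma_s}g_h^{n+1}\|^2+\|\sqrt{\sigma_a}g_h^{n+1}\|^2=0$, so $g_h^{n+1}=0$, and then $q_h^{n+1}=0$ by \eqref{eq:FDG:1T:a}. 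For the degenerate case $\eps=0$, I would argue by referring to Lemma~\ref{lem:sol} as the excerpt indicates: the $\eps=0$ limiting system is the one handled there, and its unique solvability transfers.

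The main obstacle I anticipate is the bookkeeping of the mixed explicit/implicit structure in the energy identity: one must be careful that in the \emph{inhomogeneous} problem the right-hand side and time-$n$ terms are harmless for solvability (they only matter for stability, treated later), whereas for solvability it genuinely suffices to kill the homogeneous system — so the delicate telescoping of $d_h$ against $l_h$ via Lemma~\ref{lem:1} is only needed to see the correct sign structure, not to achieve cancellation of surviving cross terms. A secondary subtlety is the case $\omega\langle v^2\rangle=0$ (e.g.\ $\omega\equiv 0$ is not allowed here by \eqref{w:prop:0}, but $\omega$ could be small): even then $\frac1{\Dt}\|\rho_h^{n+1}\|^2+(\sigma_a\rho_h^{n+1},\rho_h^{n+1})=0$ already forces $\rho_h^{n+1}=0$ because $\Dt>0$, so no issue arises; I would remark on this to be safe. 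Finally, the reduction step eliminating $q_h,u_h$ must be stated carefully as a \emph{local} (element-by-element) solve so that the resulting reduced system is genuinely finite-dimensional and square, making the ``injectivity implies bijectivity'' argument valid.
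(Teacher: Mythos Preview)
Your proposal is correct and follows essentially the same route as the paper. The paper is very terse here: it says the identities in \eqref{eq:prop:1} ``can be easily verified'' and that unique solvability reduces to Lemma~\ref{lem:sol}, whose proof is exactly the energy test you describe (test by $\rho_h^{n+1}$, use the adjoint relation $l_h(u,\rho)=d_h(\rho,u)$ from Lemma~\ref{lem:1}, and read off nonnegativity). Your write-up simply unpacks these steps for the full scheme rather than abstracting into a lemma. One minor imprecision: the recursion for $\langle g_h^{n+1}\rangle$ is not literally $\langle g_h^{n+1}\rangle=\langle g_h^n\rangle$ but rather $(I+\tfrac{\Dt}{\eps^2}\pi_h\sigma_s+\Dt\,\pi_h\sigma_a)\langle g_h^{n+1}\rangle=\langle g_h^n\rangle$ in $U_h^k$; what you need (and what you use) is only that this operator is injective, so zero input yields zero output and induction closes.
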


\begin{lem}\label{lem:sol}
Given $S\in L^2(\Omega_x)$ and $\gamma_j\geq 0, j=1, 2$. Consider the following problem: look for $\rho_h, q_h, u_h\in U_h^k$, such that $ \forall \testQ, \testU, \testR\in U_h^k$,
\begin{equation}
(q_h, \testQ)+d_h(\rho_h, \testQ)=0,  \quad (\sigma_s u_h, \testU)=(q_h, \testU), \quad (\rho_h, \testR)-\gamma_1 l_h(u_h, \testR)=-\gamma_2(\sigma_a \rho_h, \testR)+(S, \testR).
\label{eq:lem10:1}
\end{equation}
Then $\rho_h, q_h, u_h$ are uniquely solvable. 
\end{lem}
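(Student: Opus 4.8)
The plan is to reduce \eqref{eq:lem10:1} to a square linear system in $\rho_h$ alone and then establish uniqueness (hence existence, by finite dimensionality) by a standard energy argument. First I would use the fact, familiar from LDG theory, that the auxiliary variables can be eliminated locally: given $\rho_h\in U_h^k$, the first equation of \eqref{eq:lem10:1} determines $q_h\in U_h^k$ uniquely via the invertible mass matrix $(\cdot,\cdot)$, and the second equation then determines $u_h\in U_h^k$ uniquely since $\sigma_s\geq\sigma_m>0$ makes $\testU\mapsto(\sigma_s u_h,\testU)$ coercive on $U_h^k$. Thus $q_h=q_h[\rho_h]$ and $u_h=u_h[\rho_h]$ depend linearly on $\rho_h$, and the whole system is equivalent to a single linear equation for $\rho_h$ in the finite-dimensional space $U_h^k$. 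Because the system is square, it suffices to show that the only solution with $S=0$ is $\rho_h=q_h=u_h=0$.

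Next, assuming $S=0$, I would test the three equations with a well-chosen set of test functions to produce an energy identity. Concretely, take $\testR=\rho_h$ in the third equation, take $\testU$ so as to pull in $u_h$, and take $\testQ=u_h$ (or $\testQ=q_h$) in the first equation; the crucial structural input is the adjoint relation $l_h(\testQ,\testR)=d_h(\testR,\testQ)$ from Lemma \ref{lem:1}. Using this, the cross terms $d_h(\rho_h,\cdot)$ and $l_h(u_h,\cdot)$ should telescope: setting $\testQ=u_h$ in the first equation gives $(q_h,u_h)=-d_h(\rho_h,u_h)=-l_h(u_h,\rho_h)$, while the second gives $(q_h,u_h)=(\sigma_s u_h,u_h)$; combining with $\gamma_1 l_h(u_h,\rho_h)$ from the third equation tested against $\rho_h$, the $d_h/l_h$ terms cancel and one is left with
\begin{equation}
\|\rho_h\|^2 + \gamma_1(\sigma_s u_h, u_h) + \gamma_2(\sigma_a\rho_h,\rho_h) = 0.
\end{equation}
Since $\gamma_1,\gamma_2\geq 0$, $\sigma_s\geq\sigma_m>0$, and $\sigma_a\geq 0$, every term on the left is nonnegative, forcing $\rho_h=0$ and $\gamma_1(\sigma_s u_h,u_h)=0$. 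Then $q_h=0$ from the first equation (with $\rho_h=0$, $d_h(0,\testQ)=0$ gives $(q_h,\testQ)=0$ for all $\testQ$), and $u_h=0$ from the second equation (with $q_h=0$), so the homogeneous system has only the trivial solution.

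The main obstacle, though a mild one, is bookkeeping the boundary/flux terms correctly so that the cancellation between $d_h(\rho_h,u_h)$ and $l_h(u_h,\rho_h)$ is exact: one must make sure the alternating flux pair in \eqref{eq:flux} is used consistently in both $d_h$ and $l_h$, since it is precisely this consistency that makes Lemma \ref{lem:1} hold and drives the energy cancellation. A secondary point is that I am combining three equations tested with three different functions, so I would be careful to test each equation exactly once and track signs; but no estimate beyond positivity of $\|\cdot\|^2$, $(\sigma_s\cdot,\cdot)$, and $(\sigma_a\cdot,\cdot)$ is needed, and no CFL-type condition enters, which is why the result holds for all $\eps\geq 0$ and all admissible $\gamma_1,\gamma_2$. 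Finally, existence follows immediately from uniqueness because \eqref{eq:lem10:1} is a square linear system over the finite-dimensional space $(U_h^k)^3$.
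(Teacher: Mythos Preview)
Your proposal is correct and follows essentially the same approach as the paper: test the three equations with $\testQ=\testU=u_h$ and $\testR=\rho_h$, invoke the adjoint relation $l_h(u_h,\rho_h)=d_h(\rho_h,u_h)$ from Lemma~\ref{lem:1} to collapse the cross terms, obtain the identity $\|\rho_h\|^2+\gamma_1(\sigma_s u_h,u_h)+\gamma_2(\sigma_a\rho_h,\rho_h)=0$, and conclude by positivity and finite dimensionality. The paper's proof is slightly terser but structurally identical.
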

\begin{proof}
We first consider the homogeneous case with $S=0$. Take  $\testQ=\testU=u_h, \testR=\rho_h$, use the relation of $d_h$ and $l_h$, we get
$$(\rho_h, \rho_h)+\gamma_1(\sigma_s u_h, u_h)+\gamma_2(\sigma_a \rho_h, \rho_h)=0. $$
With $\gamma_1, \gamma_2, \sigma_s, \sigma_a$ being non-negative, one has $\rho_h=0$, and the equations in \eqref{eq:lem10:1} further ensure $q_h=u_h=0$. 
This, in combination with the linearity of the problem  as well as that both the solution and the test function are from the same finite dimensional space $U_h^k$, implies the unique solvability of the problem with the general  source term $S$.
\end{proof}

Following the formal asymptotic analysis as in \cite{peng2018stability}, we can show the IMEX1-LDG method is AP, namely as $\eps\rightarrow 0$, its limiting scheme is a consistent and stable discretization of the limiting system \eqref{eq:AP1:3.1.1}, when the initial data is well-prepared. This will be stated in Section \ref{sec:ap} and proved in Section \ref{sec:ap:proofs} once the uniform stability is available. When the initial data is not well-prepared, our scheme can adopt a similar initial fix  \cite{peng2018stability} when $n=0$ to stay AP. There is no change to numerical stability, while the AP property can be established rigorously and the details are not presented in this paper.

%################################
\subsection{Norms, inverse inequalities, and more notation}
\label{sec:prelim}

We introduce some standard norms $||\phi||=||\phi||_{L^2(\Ox)}$, $|||\phi|||=(\langle ||\phi||^2\rangle)^{1/2}$,  and weighted norms
$||\phi||_s= ||\sqrt{\sigma_s}\phi||$, $|||\phi|||_s =|||\sqrt{\sigma_s}\phi|||.$
For a bounded function $\psi(v)$ of $v$,  without confusion we will write $||\psi||_\infty=||\psi||_{L^\infty(\Ov)}$.  Even though for our specific examples with $\Ov=[-1, 1]$ or $\{-1, 1\}$, we have $||v||_\infty=||v^2||_\infty=1$, we still keep $||v||_\infty$ and $||v^2||_\infty$ in most results, to possibly inform about the case with a more general bounded velocity space $\Ov$.

In our analysis, the following inverse inequalities will be frequently used, and they are fairly standard in finite element analysis: there exist constants $C_{inv}=C_{inv}(k)$ and $\widehat{C}_{inv}=\widehat{C}_{inv}(k)$, such that for any $\phi \in P^k([a,b])$,
\begin{subequations}
\label{eq:inv}
\begin{align}
&|\phi(y)|^2 (b-a) \leq C_{inv}\int_a^b |\phi(x)|^2 dx, \qquad\mbox{with}\; y=a \;\mbox{or}\; b \label{eq:inv:1},\\
&(b-a)^2 \int_a^b |\phi'(x)|^2 dx \leq \widehat{C}_{inv}\int_a^b |\phi (x)|^2 dx.
\label{eq:inv:2}
\end{align}
\end{subequations}
Particularly, $C_{inv}(k)|_{k=0}=1$. Next lemma states a property of the inverse constants $\widehat{C}_{inv},  C_{inv}$.

\begin{lem} With $\Omega_v=[-1,1]$ or $\Omega_v=\{-1,1\}$, and with  $\widehat{C}_{inv}$, $C_{inv}$ from \eqref{eq:inv},  we define 
\beq
\label{def:K}
\mK=\mK(k)=\frac{8(\bbb)^2}{\aaa}=\frac{8(C_{inv})^2}{\widehat{C}_{inv}}.
\eeq 
 Then at least for $k=1, 2, \cdots, 9$, we have $\mK>1.$
 \label{lem:mK}
\end{lem}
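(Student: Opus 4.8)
The plan is to pin down the (sharp) values of the two inverse constants on the reference interval and then reduce $\mK(k)>1$ to a finite, explicit linear‑algebra check. Since all three of \eqref{eq:inv:1}, \eqref{eq:inv:2} are affine‑invariant, it suffices to take $[a,b]=[-1,1]$. For the trace constant in \eqref{eq:inv:1} I would use the reproducing kernel $K(x,y)=\sum_{j=0}^{k}\tfrac{2j+1}{2}L_j(x)L_j(y)$ of $P^k$ with respect to the $L^2(-1,1)$ inner product, where $L_j$ denotes the Legendre polynomial with $L_j(1)=1$. By the reproducing property the functional $\phi\mapsto\phi(1)$ on $P^k$ is represented by $K(1,\cdot)=\sum_{j=0}^{k}\tfrac{2j+1}{2}L_j$, whose squared $L^2(-1,1)$ norm is $\sum_{j=0}^{k}\tfrac{2j+1}{2}=\tfrac12(k+1)^2$ (using $\int_{-1}^1 L_j^2=\tfrac{2}{2j+1}$ and $\sum_{j=0}^k(2j+1)=(k+1)^2$). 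Cauchy--Schwarz then gives $|\phi(1)|^2\le\tfrac12(k+1)^2\,||\phi||_{L^2(-1,1)}^2$, with equality for $\phi=K(1,\cdot)\in P^k$; the same holds at $x=-1$. Scaling back to $[a,b]$ yields the sharp value $C_{inv}(k)=(k+1)^2$, consistent with $C_{inv}(0)=1$.

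For \eqref{eq:inv:2}, affine‑invariance shows the sharp constant is $\widehat C_{inv}(k)=4\mu_k$ where $\mu_k:=\max\{\,||p'||_{L^2(-1,1)}^2/||p||_{L^2(-1,1)}^2:\ p\in P^k,\ p\neq0\,\}$, i.e.\ the largest generalized eigenvalue of the pencil $(S,M)$ with $S_{ij}=\int_{-1}^1 L_i'L_j'$ and $M_{ij}=\int_{-1}^1 L_iL_j=\tfrac{2}{2i+1}\delta_{ij}$. In the Legendre basis $M$ is diagonal and $S$ is sparse, via $L_j'=\sum_{i}(2i+1)L_i$ summed over $i=j-1,j-3,\dots$, so for each fixed $k$ this is a completely explicit $(k+1)\times(k+1)$ eigenvalue problem; for instance $\mu_1=3$, $\mu_2=15$, $\mu_3=\tfrac{45+\sqrt{1605}}{2}\approx 42.5$, and $\mu_4,\dots,\mu_9$ are obtained in the same way.

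With these values, $\mK(k)=\dfrac{8(k+1)^4}{4\mu_k}=\dfrac{2(k+1)^4}{\mu_k}$, so $\mK(k)>1$ is \emph{equivalent} to $\mu_k<2(k+1)^4$. I would then tabulate $\mu_k$ for $k=1,\dots,9$ from the eigenvalue computation above and note that in each case $\mu_k$ is only a small fraction of $(k+1)^4$ (numerically $\mu_k\approx 0.18\,(k+1)^4$, so $\mK(k)$ is of order $10$), which is well above $1$ and proves the claim.

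The one genuinely delicate ingredient is $\widehat C_{inv}$: unlike $C_{inv}(k)=(k+1)^2$ it has no clean closed form, so the argument is by design a finite verification over $k=1,\dots,9$ rather than a $k$‑uniform statement. A uniform version would require an $L^2$‑Markov inequality of the type $\mu_k\le\tfrac12(k+1)^4$ for all $k$, whose sharp analysis is more involved; this is presumably why the lemma is stated only for moderate $k$. (If one wishes to bypass sharp constants entirely, the same conclusion follows from any explicit $hp$‑FEM bounds $C_{inv}(k)\ge c_1k^2$ and $\widehat C_{inv}(k)\le c_2k^4$ with $8c_1^2>c_2$, again checked for $k\le 9$.)
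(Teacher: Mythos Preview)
Your argument is correct and follows the same overall strategy as the paper: fix explicit values of $C_{inv}$ and $\widehat C_{inv}$, then verify $\mK(k)>1$ for each $k$ in the stated range. You also arrive at the same sharp trace constant $C_{inv}=(k+1)^2$.

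The difference lies in how $\widehat C_{inv}$ is handled. The paper does not compute the sharp $L^2$--Markov constant; it simply invokes the closed-form (non-sharp) bound $\widehat C_{inv}=12k^4$ from \cite{Matt_Li}, so that $\mK(k)=\tfrac{2(k+1)^4}{3k^4}$, and the check becomes a one-line evaluation (with $\mK(9)\approx 1.016$ just barely above $1$, which is exactly why the lemma stops at $k=9$). Your route computes the sharp $\widehat C_{inv}=4\mu_k$ via the generalized eigenvalue problem in the Legendre basis; this is more work but yields a much larger $\mK(k)$ (of order $2\pi^2$ asymptotically), and in fact would establish $\mK(k)>1$ for \emph{all} $k\ge 1$, not just $k\le 9$. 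So the paper's approach buys simplicity at the cost of the artificial cutoff, while yours buys generality at the cost of an eigenvalue computation that, for the purposes of this lemma, is more than what is needed.
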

\begin{proof}
Based on Lemmas 1-2 in \cite{Matt_Li} and a linear scaling, one can take $C_{inv}=(k+1)^2$ and $ \widehat{C}_{inv}=12k^4$, which can be used to verify $\mK>1$ directly for $k=1, 2, \cdots 9$.
\end{proof}
 Sharper values of $C_{inv}(k)$ and $\widehat{C}_{inv}(k)$ can be numerically obtained for {\em each} $k$ by solving an eigenvalue problem (see  Section 4.1 in \cite{Matt_Li}), hence  one can check numerically whether $\mK>1$ holds or not for larger $k$. Given the temporal accuracy of the IMEX1-LDG method is first order, it is more than enough for us to consider $k\leq 9$ in our analysis.

  For convenient  reference, we summarize  in Table \ref{tab:notation} the definitions of some notation arising from analysis, including $\lambda_\star, \widehat{\lambda}_\star$ and $\mu_\star$, which all depend on inverse constants 
 hence on $k$.  They also depend on the  weight function $\omega$ and the velocity space $\Omega_v$. 
 The same table also includes the definitions of 
 $\mK$ in \eqref{def:K},  a function $\mu_S(\lambda)$ and its inverse $\lambda_S(\mu)$, as well as two more functions $\lambda_j(\mu), j=1, 2$. The place where each notation appears for the first time is also included.

\begin{table}[htb]
	\caption{Some notation  (with the possible $\omega$-dependence suppressed) and the place of the first appearance}
		\begin{center}
		\begin{tabular}{lc}
					\hline	
		notation & the first appearance \\
			\hline		
			$\mK=\frac{8(\bbb)^2}{\aaa} $  & \eqref{def:K} \\\hline							
$\lambda_\star=\frac{2(1-1/(2\omega) )C_{inv}||v||_\infty}{\widehat{C}_{inv}||v^2||_\infty+8(C_{inv}||v||_\infty)^2}$ & \eqref{eq:thm:stab3:o}\\\hline
			$\mu_\star=\frac{1+\frac{1}{2\omega}\mK }{1+\mK}=\frac{\widehat{C}_{inv}||v^2||_\infty+4(C_{inv}||v||_\infty)^2/\omega}{\widehat{C}_{inv}||v^2||_\infty+8(C_{inv}||v||_\infty)^2}$ & \eqref{eq:thm:stab5:o:1}\\\hline	
			$\mu_S(\lambda)=\frac{1}{2\omega}+\frac{1}{2}\lambda\frac{\aaa}{\bbb}$
			&\eqref{eq:thm:stab5:o:3} \\ \hline
$\lambda_S(\mu)=\mu_S^{-1}(\mu)=2(\mu-\frac{1}{2\omega})\frac{\bbb}{\aaa}$ & Lemma \ref{lem:muS:mono}\\\hline	
$\widehat{\lambda}_\star=\lambda_S(1)=2(1-\frac{1}{2\omega})\frac{\bbb}{\aaa}$ & \eqref{eq:thm:stab5:o:3}\\\hline	
$\lamone(\mu)=\sqrt{\frac{(1-\mu)(\mu-\frac{1}{2\omega}) }{2\widehat{C}_{inv} ||v^2||_\infty} }, \quad \lamtwo(\mu)=\frac{1-\mu}{4C_{inv}||v||_\infty} $ & \eqref{eqn:lam12}\\\hline
		\end{tabular}
		\end{center}
		\label{tab:notation}
\end{table}

%%%%%%%%%%%%%%%%%%%
% Stability Analysis
%%%%%%%%%%%%%%%%%%%
\section{Numerical stability}
\label{sec3:stab}

In this section, we will establish numerical stability for the IMEX1-LDG method following {\em an energy approach}. At the continuous level, one can derive an energy relation
\beq
\label{eq:stab:c}
\frac{1}{2} \frac{d}{dt}\Big(||\rho||^2+\eps^2|||g|||^2\Big)=-\int_\Ov\int_\Ox \Big(\sigma_s g^2+\sigma_a (\rho+\eps g)^2\Big) dxdv
\eeq
for the model \eqref{eq:mmd:LJ0},
implying the energy $|||f|||^2(t)=||\rho||^2(t)+\eps^2|||g|||^2(t)$ does not grow in time.  Our numerical stability is a discrete analogue. Particularly,  we want to confirm that the method is unconditionally stable in the diffusive regime when $\eps\ll 1$ and it is uniformly stable in $\eps$, 
with a general form of the weight function $\omega=\omega(\eps/(\sigma_m h))$  taken into account. 
Without loss of generality, we assume the mesh is uniform with $h=h_i, \forall i$. Our results can be extended to general meshes when $\frac{\max_i h_i}{\min_i h_i}$ is bounded uniformly during mesh refinement.  For easy  readability, we will present and discuss the main results in this section, and defer the proofs   to Sections \ref{sec:stab:proofs}-\ref{sec:stab:proofs:opt}.

The natural first attempt is to follow a similar analysis as in \cite{jang2014analysis}, and this will lead to the stability result in next theorem.

%%%%%%%%%%%%
% stability without mu
%%%%%%%%%%%%
\begin{thm}\label{thm:stab_nmu} 
The following stability result holds for the IMEX1-LDG method, defined  as \eqref{eq:FDG:1T} with \eqref{eq:bilinear:def}-\eqref{eq:flux}, 
\begin{align}
E_h^{n+1}\leq E_h^{n}, \;\;\forall n\geq 1, \;\; \textrm{with} \; \; E_h^n:=||\rho_h^n||^2+\eps^2|||g_h^{n-1}|||^2+\Dt\omega\lgl v^2\rgl ||u_h^n||_s^2,
\label{eq:energy_nmu}
\end{align}
under the time step condition,
 \beq
 \label{eq:cfl}
 \Dt\leq \Dt_{stab}=\left\{
  \begin{array}{ll}
     \frac{2h}{\alpha_2 \alpha_3}(\sigma_m h+\alpha_3\eps), &\mbox{for}\; k=0,\\
  \frac{h}{\alpha_1+\alpha_2\alpha_3} (\sigma_mh+\min(\eps, \frac{\alpha_2h}{\alpha_1})\alpha_3), &\mbox{for}\; k\geq 1.
  \end{array}
 \right.
 \eeq
Here $\alpha_i, i=1, 2, 3$ are defined in terms of the inverse constants and the velocity space, namely,
\beq
\label{eq:alpha}
\alpha_1=(||v||_\infty^2+\langle v^2\rangle) \widehat{C}_{\textrm{inv}}\;,\quad\alpha_2= 2(||v||_\infty+\langle|v|\rangle)C_{\textrm{inv}}\;,\quad
\alpha_3=2 ||v||_\infty C_{\textrm{inv}}.
\eeq
\end{thm}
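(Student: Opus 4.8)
The plan is to mimic the classical LDG energy argument (as in \cite{jang2014analysis}) but carefully track the extra weighted-diffusion contributions from the $\omega$-reformulation. First I would test the four equations of the scheme \eqref{eq:FDG:1T} with a well-chosen set of test functions: take $\testR=\rho_h^{n+1}$ in \eqref{eq:FDG:1T:b}, $\testG=\eps^2 g_h^{n+1}$ in \eqref{eq:FDG:1T:d}, and use \eqref{eq:FDG:1T:a} and \eqref{eq:FDG:1T:c} (together with Lemma \ref{lem:1} and the identity $(\sigma_s u_h^m,\testU)=-l_h(\testU,\rho_h^m)$ from \eqref{eq:prop:1}) to rewrite $d_h(\rho_h^{n+1},\cdot)$ and $l_h(u_h^{n+1},\cdot)$ in terms of $u_h^{n+1}$. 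Adding the resulting identities, the coupling terms $\pm\frac{v}{\eps^2}d_h(\rho_h^{n+1},g_h^{n+1})$ and the $l_h$-$d_h$ cross terms should cancel via the adjoint property, leaving an energy identity of the schematic form
\begin{align}
\tfrac12\big(\|\rho_h^{n+1}\|^2-\|\rho_h^{n}\|^2\big)+\tfrac{\eps^2}{2}\big(\||g_h^{n+1}\||^2-\||g_h^{n}\||^2\big)
+\Dt\,\omega\lgl v^2\rgl\big(\|u_h^{n+1}\|_s^2-\text{old}\big)
= -\,\text{dissipation} + R,
\label{eq:plan:1}
\end{align}
where the dissipation collects $\Dt\|g_h^{n+1}\|_s^2$, the absorption terms, and the LDG jump terms from the upwind flux \eqref{eq:upwind}, while $R$ is a remainder built from the explicit terms evaluated at $t^n$: namely $\Dt\, l_h(\lgl v(g_h^n+\omega v u_h^n)\rgl,\rho_h^{n+1})$ and $\tfrac{\Dt}{\eps}b_{h,v}(g_h^n,g_h^{n+1})$.

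Next I would bound the remainder $R$. The key is that each explicit term is lagged in time, so after using the adjoint/flux structure to move a discrete derivative onto $g_h^n$ or $u_h^n$, one applies the inverse inequalities \eqref{eq:inv} to convert discrete spatial derivatives into $O(1/h)$ factors, and then Young's inequality to split each cross term into a piece absorbed by the dissipation at level $n+1$ (the $\Dt\|g_h^{n+1}\|_s^2$, the jump terms, and the $\Dt\omega\lgl v^2\rgl\|u_h^{n+1}\|_s^2$ on the left) and a piece controlled by $\eps^2\||g_h^n\||^2$ and $\Dt\omega\lgl v^2\rgl\|u_h^n\|_s^2$ — which are exactly the terms appearing in $E_h^n$. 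Here the inverse constants $C_{inv},\widehat{C}_{inv}$ enter, and grouping them as in \eqref{eq:alpha} gives the coefficients $\alpha_1,\alpha_2,\alpha_3$. Choosing the Young's-inequality weights optimally and demanding that all the $(n+1)$-level leftover terms be nonnegative after subtraction yields precisely a lower bound on $\sigma_m h + \eps$ (or its $k=0$ vs $k\geq1$ variants) in terms of $\Dt$, i.e. the CFL condition \eqref{eq:cfl}. The telescoping over $n$ then gives $E_h^{n+1}\leq E_h^n$ for $n\geq1$ (the restriction $n\geq1$ coming from the fact that the energy $E_h^n$ uses $g_h^{n-1}$, so one step is needed to "align" the staggering).

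The main obstacle I expect is the bookkeeping around the weighted-diffusion term $\omega\lgl v^2\rgl l_h(u_h^{n+1},\rho_h^{n+1})$ and its interplay with the explicit term $\omega\lgl v^2\rgl l_h(\lgl v^2\rgl u_h^n\text{-part},\rho_h^{n+1})$: one must show that the implicit diffusion produces exactly the $+\Dt\omega\lgl v^2\rgl\|u_h^{n+1}\|_s^2$ contribution to $E_h^{n+1}$ (using $(\sigma_s u_h^{n+1},u_h^{n+1})=\|u_h^{n+1}\|_s^2$ and $l_h(u_h^{n+1},\rho_h^{n+1})=-(\sigma_s u_h^{n+1},u_h^{n+1})$), and then control the explicit $\omega v^2 u_h^n$ contribution against it — this is where the factor $\sigma_m$ (lower bound on $\sigma_s$) enters the time-step condition, since $\|u_h^n\|_s^2\ge\sigma_m\|u_h^n\|^2$. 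A secondary subtlety is handling the two alternating-flux choices in \eqref{eq:flux} uniformly and verifying that the upwind jump terms from $b_{h,v}$ are genuinely dissipative (nonnegative) so they can be discarded or used to absorb part of $R$; this is where \eqref{eq:upwind} and Lemma \ref{lem:1} do the work. The $k=0$ case must be treated separately because then the inverse inequality \eqref{eq:inv:1} has constant $C_{inv}=1$ and \eqref{eq:inv:2} is vacuous, so the $\alpha_1$ term drops out, matching the first branch of \eqref{eq:cfl}.
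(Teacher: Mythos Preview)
There is a genuine gap in your plan: the claimed cancellation of the coupling terms does not occur. In the scheme \eqref{eq:FDG:1T}, the transport term $\lgl v g_h^n\rgl$ in the $\rho$-equation is \emph{explicit} (at level $n$), while the term $v d_h(\rho_h^{n+1},\cdot)$ in the $g$-equation is \emph{implicit} (at level $n+1$). Testing \eqref{eq:FDG:1T:b} with $\rho_h^{n+1}$ produces $\lgl v\,d_h(\rho_h^{n+1},g_h^n)\rgl$, whereas testing \eqref{eq:FDG:1T:d} with $\eps^2 g_h^{n+1}$ produces $-\lgl v\,d_h(\rho_h^{n+1},g_h^{n+1})\rgl$; these are at different time levels and there is no ``$\pm$'' pair to cancel via the adjoint identity. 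The leftover coupling $\lgl v\,d_h(\rho_h^{n+1},g_h^n-g_h^{n+1})\rgl$ pairs $u_h^{n+1}$ with $g_h^{n}-g_h^{n+1}$, and you have no spare $\|u_h^{n+1}\|_s^2$ term (all of it is consumed in building $E_h^{n+1}$) with which to absorb it.

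The missing idea is an \emph{index shift}: you must test the $g$-equation \eqref{eq:FDG:1T:d} written for the step $n-1\to n$ (not $n\to n+1$) with $\eps^2 g_h^n$, and add it to the $\rho$-equation for $n\to n+1$ tested with $\rho_h^{n+1}$. This is precisely why the energy $E_h^n$ in the statement carries $|||g_h^{n-1}|||^2$ rather than $|||g_h^n|||^2$ --- the staggering is not a cosmetic alignment, it is what makes the two coupling terms combine into the controllable difference $\lgl v\,d_h(\rho_h^{n+1}-\rho_h^n,g_h^n)\rgl=-(\sigma_s(u_h^{n+1}-u_h^n),\lgl v g_h^n\rgl)$. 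Simultaneously, the explicit and implicit diffusion pieces combine via \eqref{eq:prop:1} into $\omega\lgl v^2\rgl(\sigma_s(u_h^{n+1}-u_h^n),u_h^{n+1})=\tfrac{\omega\lgl v^2\rgl}{2}(\|u_h^{n+1}\|_s^2-\|u_h^n\|_s^2+\|u_h^{n+1}-u_h^n\|_s^2)$, which supplies both the energy difference and an extra $\|u_h^{n+1}-u_h^n\|_s^2$ to absorb the coupling. Once this shift is made, your remaining plan (inverse inequalities, Young, the separate $k=0$ case) is correct and matches the paper, which at that point simply invokes the estimates (3.22), (3.26)--(3.28), (3.36)--(3.41) of \cite{jang2014analysis} together with $|||g_h^n|||_s^2\geq\sigma_m|||g_h^n|||^2$.
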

Note that the time step condition in \eqref{eq:cfl} is essentially the same as the one for the DG-IMEX1 method defined in \cite{jang2014analysis}.  This theorem, on one hand, gives uniform stability  with respect to $\eps$,  which is important for the AP property of the method, see Section \ref{sec:ap} and Section \ref{sec:ap:proofs}, also \cite{jang2014analysis}. On the other hand, the theorem {\em  fails} to capture the unconditional stability property of the method in the diffusive regime when  $\eps\ll 1$.  

The main reason that Theorem \ref{thm:stab_nmu} missed the unconditional stability we observed numerically and predicted by Fourier analysis in \cite{peng2018stability} is that the damping mechanism associated with the scattering operator (see the right hand side term in \eqref{eq:stab:c})  has not been fully utilized in the analysis. By better exploring the contribution of the scattering operator, new   stability results can be established and they will  capture the unconditional stability  property of the method.  This indeed is one main contribution of this work.  The new stability analysis will be based on a new discrete energy $E_{h, \mu}^n$.

\begin{defn} %({\bf $\mu$-stable})
\label{def:mu-stable}
For any given  constant $\mu\in [0, 1]$, we define a discrete energy
\begin{align}
E_{h,\mu}^n= ||\rho_h^n||^2+\eps^2|||g_h^{n-1}|||^2+\omega\Dt \lgl v^2\rgl ||u_h^n||_s^2+\Dt(1-\mu)||| g_h^{n-1}|||_s^2.
\label{eqn:energy-mu}
\end{align} 
The IMEX1-LDG method is said to be {\em $\mu$-stable} if it satisfies
 \begin{align}
 E_{h,\mu}^{n+1}\leq E_{h,\mu}^n,\qquad \forall n\geq 1.
 \label{eq:mu:stab}
  \end{align}
If the method is $\mu$-stable for some $\mu\in[0,1]$, then it is said to be {\em stable}. If the scheme being $\mu$-stable (resp. stable) is independent of the time step size $\Dt$, the method is further said to be {\em unconditionally $\mu$-stable} (resp. {\em unconditionally stable}). Note that $E_{h, 1}^n=E_n^n$.
\end{defn}

With respect to the $\mu$-stability above, a new stability result will be stated in next theorem under the assumption  $\omega>1/2$. When the weight function is $\omega\equiv 1$, this assumption always holds. In general, with the property $\omega\rightarrow 1$ as $\eps\rightarrow 0$ in \eqref{w:prop:0}, the stability result can at least capture the property of the method in the diffusive regime.  
%
%
%%%%%%%%%%%%
% mu-stability
%%%%%%%%%%%%
\begin{thm}{\bf ($\mu$-stability: $\omega>\frac{1}{2}$)}
\label{thm:mu} When 
$\omega>\frac{1}{2}$,
 the following $\mu$-stability results hold for the IMEX1-LDG method, defined as  \eqref{eq:FDG:1T} with \eqref{eq:bilinear:def}-\eqref{eq:flux}. 
 \begin{itemize}
 \item [(i)]  When $k=0$ and with any fixed  $\mu\in[\frac{1}{2\omega},1]$,  if
\beq
\label{eq:thm:stab1}
\frac{\eps}{\sigma_m h}\leq  \lamzero(\mu):=\frac{1-\mu}{2C_{inv}||v||_\infty }=\frac{1-\mu}{2||v||_\infty },
\eeq
the IMEX1-LDG method is unconditionally $\mu$-stable.
  Otherwise,
 the method is conditionally $\mu$-stable when the time step satisfies
 \beq
\label{eq:thm:stab2}
\Dt\leq\tau_{\eps, h, 0}(\mu):=\frac{2\eps^2 h}{2 C_{inv}||v||_\infty \eps-(1-\mu)\sigma_m h}=\frac{2\eps^2 h}{2 ||v||_\infty \eps-(1-\mu)\sigma_m h}.
\eeq
Here we have used $C_{inv}(k)|_{k=0}=1$. The result can be expressed  more compactly as $\Dt\leq \tauzero(\mu)$, by introducing an extended real-valued function
\begin{equation}
\tauzero(\mu)=\begin{cases}
                         \infty,    \quad  &\text{if}\;\frac{\eps}{\sigma_m h} \leq \lambda_0(\mu),\\
		        \tau_{\eps,h,0}(\mu)   \quad &\text{otherwise}.
		        \end{cases}
\label{eqn:Gtau0}
\end{equation}
And the scheme is unconditionally $\mu$-stable if and only if $\tauzero(\mu)=\infty$.

\item [(ii)]  When $k\ge 1$ and with any fixed  $\mu\in(\frac{1}{2\omega},1]$, if
 \beq
\label{eq:thm:stab3}
\frac{\eps}{\sigma_m h}\leq \min\left(\lamone(\mu), \lamtwo(\mu)\right),
\eeq
the IMEX1-LDG method is unconditionally $\mu$-stable. 
Otherwise, the method is conditionally $\mu$-stable when the time step satisfies
\beq
\label{eq:thm:stab4}
\Dt\leq\left\{
\begin{array}{ll}
\tau_{\eps,h,1}(\mu),&\textrm{if}\;
\lamone(\mu)<\frac{\eps}{\sigma_m h}\leq\lamtwo(\mu),\\
\tau_{\eps,h,2}(\mu),&\textrm{if}\;
\lamtwo(\mu)<\frac{\eps}{\sigma_m h}\leq\lamone(\mu),\\
\min(\tau_{\eps,h,1}(\mu), \tau_{\eps,h,2}(\mu)),&\textrm{if}\;\frac{\eps}{\sigma_m h}\geq \max\left(\lamone(\mu), \lamtwo(\mu)\right).
\end{array}
\right.
\eeq
Here
\begin{subequations}
\label{eq:tau12}
\begin{align}
\lamone(\mu)&:=\sqrt{\frac{(1-\mu)(\mu-\frac{1}{2\omega} ) } {2\widehat{C}_{inv}||v^2||_\infty} }, \qquad \lamtwo(\mu):=\frac{1-\mu}{4C_{inv}||v||_\infty }, \label{eqn:lam12}\\
\tau_{\eps,h,1}(\mu)&:=\frac{ 2\eps^2(\mu-\frac{1}{2\omega} )h^2 \sigma_m} {2\eps^2 \widehat{C}_{inv}||v^2||_\infty-(1-\mu)(\mu-\frac{1}{2\omega} )\sigma_m^2h^2 },
\label{eqn:tau1}\\
\tau_{\eps,h,2}(\mu)&: =\frac{2\eps^2 h}{4C_{inv}||v||_\infty\eps-(1-\mu)\sigma_mh} .\label{eqn:tau2}
\end{align}
\end{subequations}
Again the results can be expressed more compactly as $\Dt\leq \min\left(\tauone(\mu), \tautwo(\mu)\right)$, by introducing two  extended real-valued functions
\begin{equation}
\label{eq:Gtau12}
\widehat{\tau}_{\eps, \mu, i}(\mu)=\begin{cases}
                         \infty,    \quad  &\text{if}\;\frac{\eps}{\sigma_m h} \leq \lambda_i(\mu)\\
		        \tau_{\eps,h,i}(\mu),
		         \quad &\text{otherwise}
		        \end{cases},
		        \qquad i=1, 2. 
\end{equation}
And the scheme is unconditionally $\mu$-stable if and only if $\min\left(\tauone(\mu), \tautwo(\mu)\right)=\infty$. 
 \end{itemize}
 \end{thm}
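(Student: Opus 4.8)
\emph{Overall strategy and Step 1 (the energy identity).} I would prove both $\mu$--stability statements by one energy computation, designed to isolate exactly the two error terms produced by the explicit (IMEX) treatment of transport in \eqref{eq:FDG:1T:b} and \eqref{eq:FDG:1T:d}, and then to dominate those terms by the backward--Euler / upwind numerical dissipation together with the scattering terms, whose splitting is governed by $\mu$. (All inner products involving $g$--type functions below are understood to include integration over $\Omega_v$.) Fix $n\ge1$ and use \eqref{eq:FDG:1T:d} written over $[t^{n-1},t^n]$. Testing \eqref{eq:FDG:1T:b} with $\testR=2\Dt\rho_h^{n+1}$ and \eqref{eq:FDG:1T:d} with $\testG=2\Dt\eps^2 g_h^n$, and combining with \eqref{eq:FDG:1T:a}--\eqref{eq:FDG:1T:c} at $t^n,t^{n+1}$, Lemma~\ref{lem:1}, Proposition~\ref{prop:sol}, the polarization identities $2(\sigma(a-b),a)=\|a\|_\sigma^2-\|b\|_\sigma^2+\|a-b\|_\sigma^2$, and the upwind identity $\langle b_{h,v}(\phi,\phi)\rangle=\tfrac12\langle\sum_i|v|[\phi]_\iL^2\rangle\ge0$ (a form of \eqref{eq:upwind}), I expect to arrive after rearrangement at
\begin{align*}
E_{h,\mu}^{n+1}-E_{h,\mu}^{n}=&-\Dt(1+\mu)|||g_h^n|||_s^2-\Dt(1-\mu)|||g_h^{n-1}|||_s^2-2\Dt\|\sqrt{\sigma_a}\rho_h^{n+1}\|^2-2\Dt\eps^2|||\sqrt{\sigma_a}g_h^n|||^2\\
&-\|\rho_h^{n+1}-\rho_h^n\|^2-\eps^2|||g_h^n-g_h^{n-1}|||^2-\Dt\omega\lgl v^2\rgl\|u_h^{n+1}-u_h^n\|_s^2-D_{n-1}-T_1-T_2,
\end{align*}
where $D_{n-1}:=\Dt\eps\langle\sum_i|v|[g_h^{n-1}]_\iL^2\rangle\ge0$, and $T_1:=2\Dt\,l_h(\lgl vg_h^n\rgl,\rho_h^{n+1}-\rho_h^n)$, $T_2:=2\Dt\eps\langle b_{h,v}(g_h^{n-1},g_h^n-g_h^{n-1})\rangle$ are the two cross terms. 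Every term except $-T_1-T_2$ is of the right sign, so $\mu$--stability reduces to controlling $-(T_1+T_2)$.

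\emph{Step 2 ($T_1$ and the role of the weighted diffusion).} Using Lemma~\ref{lem:1}, \eqref{eq:FDG:1T:a}, \eqref{eq:FDG:1T:c} and $(q_h^m,\phi)=(\sigma_s u_h^m,\phi)$ for $\phi\in U_h^k$, one has $l_h(\lgl vg_h^n\rgl,\rho_h^{n+1}-\rho_h^n)=-(\sigma_s(u_h^{n+1}-u_h^n),\lgl vg_h^n\rgl)$; Cauchy--Schwarz in $v$ and a weighted Young's inequality then give $|T_1|\le \Dt\omega\lgl v^2\rgl\|u_h^{n+1}-u_h^n\|_s^2+\tfrac{\Dt}{\omega}|||g_h^n|||_s^2$. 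The first term is exactly the diffusion dissipation in Step~1, and the second is absorbed by $\Dt(1+\mu)|||g_h^n|||_s^2$ because $1+\mu\ge\tfrac1\omega$ under $\omega>\tfrac12$ and $\mu\ge\tfrac1{2\omega}$ — this is precisely where the hypothesis $\omega>\tfrac12$ and the threshold $\tfrac1{2\omega}$ for $\mu$ enter, and it is the step that exploits the added weighted diffusion term. After it, a residual scattering budget $\Dt(1+\mu-\tfrac1\omega)|||g_h^n|||_s^2$ on level $n$ and the full $\Dt(1-\mu)|||g_h^{n-1}|||_s^2$ on level $n-1$ remain available for $T_2$.

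\emph{Step 3 ($T_2$ and the $\eps/(\sigma_m h)$--thresholds).} The crucial observation is the identity $T_2+D_{n-1}=2\Dt\eps\langle(\mD_h(g_h^{n-1};v),g_h^n)\rangle$, which replaces the $\eps^2$--weighted increment $g_h^n-g_h^{n-1}$ by $g_h^n$ itself. By \eqref{eq:mD} this splits into an edge contribution — bounded, via the trace inverse inequality \eqref{eq:inv:1} and Cauchy--Schwarz in $v$, by a constant (proportional to $C_{inv}||v||_\infty$) times $\tfrac{\Dt\eps}{h}|||g_h^{n-1}|||\,|||g_h^n|||$ — and (for $k\ge1$) a volume contribution, bounded similarly via \eqref{eq:inv:2} by a constant (proportional to $\sqrt{\widehat{C}_{inv}||v^2||_\infty}$) times the same quantity. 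Each is then absorbed, after $|||\cdot|||^2\le\sigma_m^{-1}|||\cdot|||_s^2$ and Young's inequality, into the two $\Dt$--weighted scattering budgets of Step~2; since those budgets and the cross terms all carry the same factor $\Dt$, $\Dt$ \emph{cancels} in the resulting discriminant inequality, which therefore becomes a condition on $\eps/(\sigma_m h)$ alone. It is satisfied for all $\Dt$ precisely when $\eps/(\sigma_m h)\le\lamzero(\mu)$ ($k=0$), resp. $\le\min(\lamone(\mu),\lamtwo(\mu))$ ($k\ge1$), the volume part producing the threshold $\lambda_1$ carrying $\widehat{C}_{inv}||v^2||_\infty$ and the edge part producing $\lambda_2$ carrying $C_{inv}||v||_\infty$; when it fails one instead uses the alternative bound $T_2+D_{n-1}\le D_{n-1}+c\,\tfrac{\Dt\eps}{h}|||g_h^{n-1}|||\,|||g_h^n-g_h^{n-1}|||$, bringing in the dissipation $\eps^2|||g_h^n-g_h^{n-1}|||^2$, and the same quadratic-form analysis now yields the time-step bounds $\tau_{\eps,h,0}(\mu)$, resp. $\tau_{\eps,h,1}(\mu),\tau_{\eps,h,2}(\mu)$ and the three sub-cases of \eqref{eq:thm:stab4}. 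Discarding the nonnegative absorption terms $2\Dt\|\sqrt{\sigma_a}\rho_h^{n+1}\|^2$, $2\Dt\eps^2|||\sqrt{\sigma_a}g_h^n|||^2$ then gives $E_{h,\mu}^{n+1}\le E_{h,\mu}^n$ under the stated conditions (and $\mu-\tfrac1{2\omega}>0$ is needed strictly for the volume part, hence the half-open interval for $\mu$ when $k\ge1$).

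\emph{Expected main obstacle.} The whole difficulty sits in Step~3: one must apportion the available dissipation — the two $\Dt$--weighted scattering budgets, the $\eps^2$--weighted $|||g_h^n-g_h^{n-1}|||^2$, and $D_{n-1}$ — between the edge and volume parts of $T_2$ so that the constants come out exactly as $\lambda_i(\mu)$ and $\tau_{\eps,h,i}(\mu)$ and so that all three sub-cases in \eqref{eq:thm:stab4} emerge, and in particular so that the $\eps$--dependence does not collapse into a spurious parabolic restriction $\Dt=O(h^2)$. The identity $T_2+D_{n-1}=2\Dt\eps\langle(\mD_h(g_h^{n-1};v),g_h^n)\rangle$ — which trades an $\eps^2$--weighted dissipation for a $\Dt$--weighted one and thereby lets $\Dt$ drop out of the inequality — is exactly what unlocks the unconditional regime; extracting its optimal constants is the technical heart of the argument.
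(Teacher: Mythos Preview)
Your energy identity in Step 1 is correct, and your treatment of $T_1$ in Step 2 is fine (Young with weight $\omega$ is equivalent to the discriminant argument the paper uses). The gap is in Step 3, and it is structural.

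You decompose $\langle b_{h,v}(g_h^{n-1},g_h^n)\rangle=\langle b_{h,v}(g_h^{n-1},g_h^{n-1})\rangle+\langle b_{h,v}(g_h^{n-1},g_h^n-g_h^{n-1})\rangle$, so the upwind dissipation you extract is $D_{n-1}$, controlling $[g_h^{n-1}]^2$. But by \eqref{eq:mD} the edge part of $T_2$ carries the jump of the \emph{test} function $[g_h^n-g_h^{n-1}]$, which $D_{n-1}$ does not control. The paper makes the opposite split, $\langle b_{h,v}(g_h^{n-1},g_h^n)\rangle=\langle b_{h,v}(g_h^{n},g_h^{n})\rangle-\langle b_{h,v}(g_h^{n}-g_h^{n-1},g_h^{n})\rangle$, so the upwind dissipation lives at level $n$ and exactly matches the $[g_h^n]$ appearing in the cross term; this is what lets the edge contribution be routed to the jump term (condition \eqref{eq:ineq:2}) rather than to the full norm. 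Second, you are missing the parallelogram identity \eqref{eq:process:change_g}: writing $|||g_h^n|||_s^2=\mu|||g_h^n|||_s^2+(1-\mu)\bigl(\tfrac12|||g_h^n|||_s^2-\tfrac12|||g_h^{n-1}|||_s^2+\tfrac14|||g_h^n-g_h^{n-1}|||_s^2+\tfrac14|||g_h^n+g_h^{n-1}|||_s^2\bigr)$ is what produces the extra budget $\tfrac{(1-\mu)\sigma_m}{4}|||g_h^n-g_h^{n-1}|||^2$ that sits alongside $\tfrac{\eps^2}{2\Dt}|||g_h^n-g_h^{n-1}|||^2$. The paper then chooses the Young parameters $\theta/\sigma_m=\eta=\tfrac12\bigl(\tfrac{\eps}{2\Dt}+\tfrac{(1-\mu)\sigma_m}{4\eps}\bigr)$ so that \eqref{eq:ineq:3} is automatic and \eqref{eq:ineq:1}--\eqref{eq:ineq:2} become precisely the two decoupled conditions giving $\lambda_1,\tau_{\eps,h,1}$ (volume, routed to the $(\mu-\tfrac{1}{2\omega})$ budget via the quadratic form) and $\lambda_2,\tau_{\eps,h,2}$ (edge, routed to the upwind jump at level $n$).

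Your Route B, as written, cannot recover this. Bounding $|T_2|\le c\,\tfrac{\Dt\eps}{h}|||g_h^{n-1}|||\,|||g_h^n-g_h^{n-1}|||$ and absorbing into $\eps^2|||g_h^n-g_h^{n-1}|||^2$ together with $(1-\mu)\Dt\sigma_m|||g_h^{n-1}|||^2$ forces, after Young, $\Dt\le O(\sigma_m h^2)$ --- exactly the parabolic collapse you flag as the main obstacle. The ``identity $T_2+D_{n-1}=2\Dt\eps\langle(\mD_h(g_h^{n-1};v),g_h^n)\rangle$'' only helps in the unconditional regime; it does not repair Route B. Moreover, even in Route A your scattering budgets $(1+\mu-\tfrac1\omega)$ at level $n$ and $(1-\mu)$ at level $n-1$ do not reproduce the stated constants: $\lambda_1(\mu)^2=\tfrac{(1-\mu)(\mu-\tfrac{1}{2\omega})}{2\widehat C_{inv}\|v^2\|_\infty}$ and $\lambda_2(\mu)=\tfrac{1-\mu}{4C_{inv}\|v\|_\infty}$ arise specifically from routing volume and edge pieces to \emph{different} dissipations (the $\mu$--budget after the discriminant, and the level--$n$ jump term, respectively), not from splitting a single product bound between two scattering norms.
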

 
We can see now that with some choice of  $\mu$, this new stability result in Theorem \ref{thm:mu} captures the unconditional stability in the diffusive regime. This regime at the discrete level is characterized by \eqref{eq:thm:stab1} and \eqref{eq:thm:stab3} when $\eps/(\sigma_m h)$ is relatively small.  It is also clear that the choice of $\mu$ matters when one  interprets  the results. For instance when $k=0$, with $\mu=1/(2\omega)$, the IMEX1-LDG method is unconditionally stable in the diffusive regime, yet with $\mu=1$, we  no longer see this property according to Theorem  \ref{thm:mu}.  This motivates us to further refine the results.   Based on the definition of the (unconditional) stability in Definition \ref{def:mu-stable},  we consider an optimization problem for any given $\eps, h$, and look for  the ``best'' possible choice of $\mu$, that maximizes   the unconditionally stable region (that is, to maximize the allowable range of  $\eps/(\sigma_m h)$ in  \eqref{eq:thm:stab1} and \eqref{eq:thm:stab3}), and possibly  also maximizes the allowable time step condition in  \eqref{eq:thm:stab2} and \eqref{eq:thm:stab4}  when the method is conditionally stable. 
The optimization process leads to Theoreom \ref{thm:stab} that comes next, with the underlying logic as 
%and the logical foundation is 
$$\max\{\lambda: \lambda\leq \Theta(\mu, \lambda), \forall \mu\in [\mathcal{H}(\lambda), 1] \} =\max\{\lambda:  \lambda\leq \max_{\mu\in [\mathcal{H}(\lambda), 1] }\Theta(\mu, \lambda)\},
$$
if all maximums are assumed to exist, and $\Theta, \mathcal{H}$ are some continuous functions. The relation holds if $[\mathcal{H}(\lambda), 1]$ is replaced by $(\mathcal{H}(\lambda), 1]$. 
Note that the weight function in the stability results is in the form $\omega=\omega(\eps/(\sigma_m h))$.

\begin{thm}{\bf (Stability: $\omega>\frac{1}{2}$)}
\label{thm:stab}  When 
$\omega>\frac{1}{2}$,
 the following stability results hold for the IMEX1-LDG method, defined as  \eqref{eq:FDG:1T} with \eqref{eq:bilinear:def}-\eqref{eq:flux}. 

 \begin{itemize}
 \item [(i)]  When $k=0$,  the IMEX1-LDG method is stable when 
 \beq
 \Dt\leq \dtstabzero(\eps, h):= \max_{\mu\in[\frac{1}{2\omega}, 1]} \tauzero(\mu)
 =\tauzero\left(\frac{1}{2\omega}\right).
 \eeq
 In particular, the method is unconditionally stable if $\dtstabzero(\eps, h)=\infty$, that is, when 
\beq
\label{eq:thm:stab1:o}
\frac{\eps}{\sigma_m h}\leq  \max_{\mu\in[\frac{1}{2\omega}, 1]} \lamzero(\mu)=\lamzero\left(\frac{1}{2\omega}\right)
= \frac{1-\frac{1}{2\omega}}{2||v||_\infty }.
\eeq
  Otherwise,
 the method is conditionally stable under the time step condition 
 \begin{align}
\label{eq:thm:stab2:o}
\Dt\leq  \max_{\mu\in[\frac{1}{2\omega}, 1]} \tau_{\eps, h, 0}(\mu)= \tau_{\eps, h, 0}\left(\frac{1}{2\omega}\right)
=\frac{2\eps^2 h}{2 ||v||_\infty \eps-(1-\frac{1}{2\omega})\sigma_m h}.
\end{align}

\item [(ii)]  
When $1\leq k \leq 9$,  the IMEX1-LDG method is stable when 
 \beq
 \Dt\leq \dtstabk(\eps, h):= \max_{\mu\in(\frac{1}{2\omega}, 1]} \min\left( \tauone(\mu), \tautwo(\mu) \right).
 \eeq
 In particular, the method is unconditionally stable if $\dtstabk(\eps, h)=\infty$, that is
when 
\begin{align}
\frac{\eps}{\sigma_mh}&\leq
\max_{\mu\in (\frac{1}{2\omega}, 1]} \min\left(\lamone(\mu), \lamtwo(\mu)\right)=
\min\left(\lamone(\mu), \lamtwo(\mu)\right)|_{\mu=\mu_\star}\notag\\
&=
  \lambda_\star:=\frac{2(1-\frac{1}{2\omega} )C_{inv}||v||_\infty}{\widehat{C}_{inv}||v^2||_\infty+8(C_{inv}||v||_\infty)^2}.
\label{eq:thm:stab3:o}
\end{align}
Otherwise the method is conditionally stable under the time step condition
\begin{align}
\Dt&\leq
\max_{\mu\in (\frac{1}{2\omega}, 1]}\min\left(\tauone(\mu), \tautwo(\mu)\right)\notag\\
&= \tau_{\eps, h, 1}( \min(\mu_S(\frac{\eps}{\sigma_m h}), 1)\notag\\
&=\left\{
\begin{array}{ll}
\tau_{\eps, h, 1}(\mu_S(\frac{\eps}{\sigma_m h}))=
\frac{4\bbb\eps^2 h}{(8(\bbb)^2+\aaa)\eps-2\bbb(1-\frac{1}{2\omega})\sigma_m h}, & \text{for}\;\lambda_\star<\frac{\eps}{\sigma_m h}\leq \widehat{\lambda}_\star,\\
\tau_{\eps, h, 1}(1)=\frac{(1-\frac{1}{2\omega})\sigma_m h^2}{\aaa},
			&\text{for}\; \frac{\eps}{\sigma_m h} > \widehat{\lambda}_\star.			
\end{array}
\right.
\label{eq:thm:stab4:o}
\end{align}
Here 
\begin{subequations}
\begin{align}
\mu_\star&=\frac{1+\frac{1}{2\omega}\mK }{1+\mK}=\frac{\widehat{C}_{inv}||v^2||_\infty+4(C_{inv}||v||_\infty)^2/\omega}{\widehat{C}_{inv}||v^2||_\infty+8(C_{inv}||v||_\infty)^2},\label{eq:thm:stab5:o:1}\\
\mu_S(\lambda)&=\frac{1}{2\omega}+\frac{1}{2}\lambda\frac{\aaa}{\bbb}, \quad
%\label{eq:thm:stab5:o:2}\\
\widehat{\lambda}_\star=\mu_S^{-1}(1)=2(1-\frac{1}{2\omega})\frac{\bbb}{\aaa}.\label{eq:thm:stab5:o:3}
\end{align}
\end{subequations}
 \end{itemize}
  \end{thm}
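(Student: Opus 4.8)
\medskip
\noindent\emph{Proof plan for Theorem~\ref{thm:stab}.}
The statement amounts to an optimization over the free parameter $\mu$ of the $\mu$-stability thresholds furnished by Theorem~\ref{thm:mu}. By Definition~\ref{def:mu-stable}, the IMEX1-LDG method is \emph{stable} iff it is $\mu$-stable for \emph{some} admissible $\mu$; combined with Theorem~\ref{thm:mu} this means stability holds under $\Dt\le\max_\mu\tauzero(\mu)$ when $k=0$ and under $\Dt\le\max_\mu\min(\tauone(\mu),\tautwo(\mu))$ when $k\ge1$ (the maximum over $[\frac{1}{2\omega},1]$, resp.\ $(\frac{1}{2\omega},1]$), and that it is unconditionally stable iff this maximum is $+\infty$, i.e.\ iff $\eps/(\sigma_m h)$ does not exceed $\max_\mu\lamzero(\mu)$, resp.\ $\max_\mu\min(\lamone(\mu),\lamtwo(\mu))$. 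Since the left endpoint $\frac{1}{2\omega}=\frac{1}{2\omega(\eps/(\sigma_m h))}$ itself depends on $\eps/(\sigma_m h)$, I would first record the elementary set identity displayed just above the theorem, which interchanges the existential quantifier over $\mu$ with a pointwise maximum; then for each fixed pair $(\eps,h)$ it remains only to compute these $\mu$-maxima in closed form and substitute.

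For part~(i) ($k=0$) this is immediate: $\lamzero(\mu)=\frac{1-\mu}{2||v||_\infty}$ is strictly decreasing on $[\frac{1}{2\omega},1]$, and on the subregion where $\tauzero(\mu)=\tau_{\eps,h,0}(\mu)$ the denominator $2||v||_\infty\eps-(1-\mu)\sigma_m h$ is positive and increasing in $\mu$, so $\tau_{\eps,h,0}$ decreases there; hence $\tauzero$ is non-increasing in $\mu$ and both maxima are attained at $\mu=\frac{1}{2\omega}$, which yields the displayed formulas after substitution.

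For part~(ii) ($1\le k\le9$) I would treat the unconditional and conditional regimes separately. In the unconditional regime one maximizes $\min(\lamone(\mu),\lamtwo(\mu))$ over $(\frac{1}{2\omega},1]$: here $\lamtwo$ is strictly decreasing and vanishes at $\mu=1$, while $\lamone$ vanishes at both endpoints and is unimodal with peak at $\mu=\tfrac{1}{2}(1+\tfrac{1}{2\omega})$; solving $\lamone(\mu)=\lamtwo(\mu)$ — square both sides, cancel the common factor $(1-\mu)$, and solve the residual linear equation — gives the unique crossing $\mu_\star$ of \eqref{eq:thm:stab5:o:1}, which lies in $(\frac{1}{2\omega},1)$ since $\omega>\frac{1}{2}$ and, crucially, strictly to the \emph{left} of the peak of $\lamone$ precisely because $\mK>1$ (Lemma~\ref{lem:mK}). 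Consequently $\lamone\le\lamtwo$ with $\lamone$ increasing on $(\frac{1}{2\omega},\mu_\star]$, and $\lamtwo<\lamone$ with $\lamtwo$ decreasing on $(\mu_\star,1]$, so the maximum of the minimum is $\lamtwo(\mu_\star)$, which simplifies to $\lambda_\star$ in \eqref{eq:thm:stab3:o}. In the conditional regime ($\eps/(\sigma_m h)>\lambda_\star$), the pivotal identity is that $\tau_{\eps,h,1}(\mu)=\tau_{\eps,h,2}(\mu)$ holds exactly at $\mu=\mu_S(\eps/(\sigma_m h))$: cross-multiplying, the mixed terms $(1-\mu)(\mu-\frac{1}{2\omega})\sigma_m^2h^2$ cancel identically, leaving the linear relation $\mu-\frac{1}{2\omega}=\tfrac{1}{2}\frac{\eps}{\sigma_m h}\frac{\aaa}{\bbb}$. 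Combining this with the monotonicity facts that $\tautwo$ is non-increasing on $(\frac{1}{2\omega},1]$ and that $\tau_{\eps,h,1}$ is increasing for $\mu$ on $(\frac{1}{2\omega},\min(\mu_S,1)]$ (its $\mu$-derivative has the sign of $2\eps^2\aaa-\sigma_m^2h^2(\mu-\frac{1}{2\omega})^2$, which is positive there because $\mK>1$), one obtains that $\min(\tauone,\tautwo)$ increases up to $\mu=\min(\mu_S,1)$ and is non-increasing afterwards; hence its maximum is $\tau_{\eps,h,1}(\min(\mu_S,1))$, the two branches of \eqref{eq:thm:stab4:o} being the cases $\mu_S\le1$ versus $\mu_S>1$, i.e.\ $\eps/(\sigma_m h)\le\widehat\lambda_\star=\mu_S^{-1}(1)$ or not, and evaluating $\tau_{\eps,h,1}$ at $\mu_S$ and at $1$ gives the two displayed closed forms.

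The main obstacle is the case bookkeeping in the conditional regime for $k\ge1$: since $\tauone(\mu)$ and $\tautwo(\mu)$ each switch between their finite formula and $+\infty$ according to whether $\eps/(\sigma_m h)$ lies below the bump $\lamone(\mu)$ or below the monotone $\lamtwo(\mu)$, one must check that on $(\frac{1}{2\omega},\min(\mu_S,1)]$ the finite branch of $\tauone$ is active throughout, and symmetrically that $\tautwo$ is finite on $[\mu_S,1]$ — for otherwise a return to $+\infty$ at some $\mu$ would force $\tautwo(\mu)=+\infty$ there as well, contradicting $\eps/(\sigma_m h)>\lambda_\star$. These exclusions, together with the verification that the two graphs do not re-cross on the relevant subinterval, reduce to short inequalities that once more rely only on $\mK>1$ and $\omega>\tfrac{1}{2}$. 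Apart from these sign checks, everything is substitution into the closed-form thresholds of Theorem~\ref{thm:mu} and routine algebraic simplification.
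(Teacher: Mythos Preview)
Your proposal is correct and follows essentially the same approach as the paper: both reduce Theorem~\ref{thm:stab} to optimizing the $\mu$-stability thresholds of Theorem~\ref{thm:mu}, locating the crossing points $\mu_\star$ (where $\lamone=\lamtwo$) and $\mu_S(\eps/(\sigma_m h))$ (where $\tau_{\eps,h,1}=\tau_{\eps,h,2}$), and exploiting the monotonicity of $\lamone,\lamtwo,\tau_{\eps,h,1},\tau_{\eps,h,2}$ on the appropriate subintervals, with $\mK>1$ supplying the needed sign conditions throughout. The paper packages the ``case bookkeeping'' you flag as the main obstacle into an explicit three-set decomposition $S_1\cup S_2\cup S_3$ of $(\frac{1}{2\omega},1]$ (according to which of $\tauone,\tautwo$ is finite) and a sequence of preparatory lemmas recording the relative positions of the curves $\lambda=\lambda_S(\mu)$, $\lambda=\lamone(\mu)$, $\lambda=\lamtwo(\mu)$; your outline is the same argument compressed, and your identification of the residual checks (that $\tauone$ stays finite on $(\frac{1}{2\omega},\min(\mu_S,1)]$ and $\tautwo$ on $[\mu_S,1]$) is exactly what the paper verifies via $S_2\subset(\frac{1}{2\omega},\mu_S)$ and $S_3\subset(\mu_S,1]$.
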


\begin{rem} The results in Theorem \ref{thm:stab} also implies an alternative route to obtain this theorem. In fact,  one can establish  Theorem \ref{thm:stab} by following the proof of Theorem \ref{thm:mu} and taking $\mu=\frac{1}{2\omega}$ when $k=0$, and taking 
\begin{align}
&\mu=\mu(\eps, h; k):
=\left\{
\begin{array}{ll}
\mu_\star, & \text{for}\;\frac{\eps}{\sigma_m h}\leq \lambda_\star\\
\min\left(\mu_S(\frac{\eps}{\sigma_m h}),1\right), & \text{for}\;\frac{\eps}{\sigma_m h}>\lambda_\star,\\
\end{array}
\right.
\end{align}
in  defining the discrete energy $E_{h,\mu}^n$ in \eqref{eqn:energy-mu}, tailored for each given $\eps$, $h$ (implicitly also for a given  weight function $\omega(\eps/(\sigma_m h))$.
Note that $\mu$ is chosen according to $\eps/(\sigma_m h)$ that describes the regime the model is in with respect to the 
discretization parameter $h$. The assumption $1\leq k\leq 9$ in this theorem is to ensure $\mK>1$, see Lemma \ref{lem:mK}.
\end{rem}

Following the notion of the stability in Definition \ref{def:mu-stable} and with  $E_{h, 1}^n=E_h^n$, we can combine the results in Theorem \ref{thm:stab_nmu} and Theorem \ref{thm:stab}, and obtain our final results on numerical stability for a general weight function $\omega=\omega(\eps/(\sigma_m h))$ that satisfies the property \eqref{w:prop:0}.

\begin{thm}\label{thm:stab_final}
The  following stability results hold for the IMEX1-LDG method, defined as  \eqref{eq:FDG:1T} with \eqref{eq:bilinear:def}-\eqref{eq:flux}. 

 \begin{itemize}
 \item[(i)] 
When $k=0$, the method is unconditionally stable, if 
\begin{equation}
\omega>\frac{1}{2}\quad \text{and}\quad \frac{\eps}{\sigma_m h}\leq \frac{1-\frac{1}{2\omega}}{2||v||_\infty}.
\label{eq:thm:stab1:f}
\end{equation}
Otherwise, the method is conditionally stable under the time step condition
\begin{equation}
\Dt\leq \max\left(
\frac{2||v||_\infty \eps h+\sigma_m h^2}{ 2||v||_\infty (||v||_\infty+\lgl |v|\rgl)},\;
 \frac{ 2\eps^2 h \cdot{\bf 1}_{\{\omega>\frac{1}{2}\}}}{2 ||v||_\infty \eps-(1-\frac{1}{2\omega})\sigma_m h}
\right).
\label{eq:thm:stab2:f}
\end{equation}

\item[(ii)] When $1\leq k \leq 9$,  the method is unconditionally stable, if 
\begin{equation}
\omega>\frac{1}{2}\quad \text{and}\quad
\frac{\eps}{\sigma_m h}\leq  \lambda_\star.
\label{eq:thm:stab3:f}
\end{equation}
Otherwise, the method is conditionally stable under the time step condition
\begin{equation}
\Dt\leq \max\left(  
\frac{h}{\alpha_1+\alpha_2\alpha_3} (\sigma_m h+\min(\eps, \frac{\alpha_2h}{\alpha_1})\alpha_3),\;
 {\bf{1} }_{\{\omega>\frac{1}{2}\} }\cdot \tau_{\eps,h,1}\big(\min(\mu_S(\frac{\eps}{\sigma_m h}),1) \big)
 \right),
 \label{eq:thm:stab4:f}
\end{equation}
where $\alpha_i, i=1, 2, 3$ are given in \eqref{eq:alpha}.
\end{itemize}
\end{thm}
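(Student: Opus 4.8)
The plan is to obtain Theorem \ref{thm:stab_final} as a direct consolidation of Theorem \ref{thm:stab_nmu} and Theorem \ref{thm:stab} via the notion of stability in Definition \ref{def:mu-stable}: there the method is declared \emph{stable} (resp.\ \emph{unconditionally stable}) once it is $\mu$-stable (resp.\ unconditionally $\mu$-stable) for \emph{some} admissible $\mu$. Hence for each parameter regime it suffices to exhibit one admissible $\mu$, together with the time step condition guaranteeing that the associated energy $E_{h,\mu}^n$ is non-increasing; the union of these conditions (i.e.\ the pointwise $\max$ over the available choices of $\mu$) is the final statement. Since $E_{h,1}^n=E_h^n$, Theorem \ref{thm:stab_nmu} already asserts $1$-stability whenever $\Dt\le\Dt_{stab}$ as in \eqref{eq:cfl}, and this holds for \emph{any} admissible weight $\omega$, with no requirement $\omega>\tfrac12$. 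Under the extra hypothesis $\omega>\tfrac12$, Theorem \ref{thm:stab} supplies a second, possibly infinite, bound: $\dtstabzero(\eps,h)=\tauzero(\tfrac1{2\omega})$ for $k=0$ and $\dtstabk(\eps,h)=\tau_{\eps,h,1}(\min(\mu_S(\eps/(\sigma_m h)),1))$ for $1\le k\le 9$, obtained with the optimal choices $\mu=\tfrac1{2\omega}$ and $\mu=\mu(\eps,h;k)$ respectively (cf.\ the remark following Theorem \ref{thm:stab}), under which the method is $\mu$-stable.

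First I would check admissibility of these $\mu$-values. For $k=0$ one needs $\mu\in[\tfrac1{2\omega},1]$, which is immediate because $\omega>\tfrac12$ forces $\tfrac1{2\omega}<1$. For $1\le k\le 9$ one needs $\mu\in(\tfrac1{2\omega},1]$; invoking $\mK>1$ from Lemma \ref{lem:mK} gives $\tfrac1{2\omega}<\mu_\star<1$, and since $\mu_S(\lambda)=\tfrac1{2\omega}+\tfrac12\lambda\frac{\aaa}{\bbb}>\tfrac1{2\omega}$ for $\lambda>0$, the choice $\min(\mu_S(\eps/(\sigma_m h)),1)$ is admissible as well. Combining the two building blocks, the method is stable whenever $\Dt$ does not exceed the \emph{larger} of the Theorem \ref{thm:stab_nmu} bound and — \emph{only if} $\omega>\tfrac12$ — the Theorem \ref{thm:stab} bound; this is precisely what the indicator ${\bf 1}_{\{\omega>\frac12\}}$ encodes as the second argument of each $\max$ in \eqref{eq:thm:stab2:f} and \eqref{eq:thm:stab4:f}. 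In particular, unconditional stability occurs exactly when the Theorem \ref{thm:stab} bound equals $+\infty$, i.e.\ when $\omega>\tfrac12$ and $\eps/(\sigma_m h)\le\lambda_0(\tfrac1{2\omega})=(1-\tfrac1{2\omega})/(2\|v\|_\infty)$ for $k=0$ (by \eqref{eq:thm:stab1:o}), and $\eps/(\sigma_m h)\le\lambda_\star$ for $1\le k\le 9$ (by \eqref{eq:thm:stab3:o}); these are \eqref{eq:thm:stab1:f} and \eqref{eq:thm:stab3:f}. The closed forms of the conditional bounds are then pure bookkeeping: for $k=0$, substitute $C_{inv}(0)=1$ into $\alpha_1,\alpha_2,\alpha_3$ from \eqref{eq:alpha} and simplify $\tfrac{2h}{\alpha_2\alpha_3}(\sigma_m h+\alpha_3\eps)$ to $\tfrac{2\|v\|_\infty\eps h+\sigma_m h^2}{2\|v\|_\infty(\|v\|_\infty+\langle|v|\rangle)}$, while the finite branch of $\tauzero(\tfrac1{2\omega})$ is $\tfrac{2\eps^2 h}{2\|v\|_\infty\eps-(1-\tfrac1{2\omega})\sigma_m h}$ from \eqref{eq:thm:stab2:o}; for $1\le k\le 9$ one simply transcribes the bound of \eqref{eq:cfl} and the finite branch of the Theorem \ref{thm:stab} bound from \eqref{eq:thm:stab4:o}. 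No new estimate enters.

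There is no genuine analytic obstacle here; the only point demanding care is the bifurcation at $\omega=\tfrac12$. When $\omega\le\tfrac12$ Theorem \ref{thm:stab} is silent, so the Theorem \ref{thm:stab_nmu} bound is the only one available — which is exactly why the indicator ${\bf 1}_{\{\omega>\frac12\}}$ must multiply the second term and why unconditional stability is never claimed in that case — and, dually, when $\omega>\tfrac12$ one must make sure that the unconditional-stability thresholds asserted in \eqref{eq:thm:stab1:f}, \eqref{eq:thm:stab3:f} coincide with the thresholds at which the extended-real-valued functions $\tauzero(\tfrac1{2\omega})$ and $\min(\tauone,\tautwo)$ (evaluated at the optimal $\mu$) jump to $+\infty$, namely $\lambda_0(\tfrac1{2\omega})$ and $\lambda_\star$ as already identified within Theorem \ref{thm:stab}. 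Once these two checks are in place, the statement follows by taking the $\max$ of the two conditional time step bounds in each case $k=0$ and $1\le k\le 9$.
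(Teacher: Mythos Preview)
Your proposal is correct and matches the paper's approach exactly: the paper does not give a separate proof for Theorem~\ref{thm:stab_final} but states it as the direct combination of Theorem~\ref{thm:stab_nmu} (which gives $1$-stability, since $E_{h,1}^n=E_h^n$, for any $\omega$) and Theorem~\ref{thm:stab} (which, under $\omega>\tfrac12$, gives the optimized $\mu$-stability bound that can be $+\infty$), taking the $\max$ over the two available bounds via Definition~\ref{def:mu-stable}. Your bookkeeping of the constants, the admissibility of the optimal $\mu$, and the role of the indicator ${\bf 1}_{\{\omega>1/2\}}$ are all exactly what is needed to make the one-line combination explicit.
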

%%%%%%%%%%%
% Discussion on the stability results for k=0
%%%%%%%%%%%%%

\begin{rem} 
\label{rem:stab:w}  When $k=0$, the IMEX1-LDG method, denoted as IMEX1-LDG1 method,  will be of first order in both space and time. We here will examine more  explicitly the stability results for this first order method   when the model is the telegraph equation (referred to as T model) and the one-group transport equation in slab geometry (referred to as OG model). Note that $\lgl |v|\rgl =1$ for the former, and $\lgl |v|\rgl =\frac{1}{2}$ for the latter.  Particularly, we want to give the results for  three  weight functions, including $\omega\equiv 1$ and $\omega=\exp(-\frac{\eps}{\sigma_m h})$ (used in \cite{peng2018stability}), and a piecewise-defined $\omega$ takings value $1$ for ``relatively small'' $\eps$ and $0$ for large $\eps$ (used in \cite{boscarino2014high}).  Our analysis will provide some guidance on how to define such  piecewise constant $\omega$. All three examples of $\omega$ are monotonically non-increasing in $\eps/(\sigma_m h)$.
First of all, for the IMEX1-LDG1 method, the result \eqref{eq:thm:stab2:f} is indeed   
\begin{equation}
\Dt\leq \max\left(
\frac{2\eps h+\sigma_m h^2}{\beta},\;
 \frac{ 2\eps^2 h \cdot{\bf 1}_{\{\omega>\frac{1}{2}\}}}{2 \eps-(1-\frac{1}{2\omega})\sigma_m h}
\right),
\quad
\beta=\left\{
\begin{array}{ll}
4 & \textrm{(T model)}\\
3 & \textrm{(OG model)}
\end{array}
\right..
\label{eq:thm:stab5:f}
\end{equation}

\begin{itemize}
\item[i.)]  We first consider $\omega\equiv 1$. It is easy to verify that $ \frac{ 2\eps^2 h}{2 \eps-(1-\frac{1}{2\omega})\sigma_m h}\Big|_{\omega=1}\geq  \frac{2\eps h+\sigma_m h^2}{\beta}$ always holds.  Then the stability results for the IMEX1-LDG1 method in  \eqref{eq:thm:stab1:f}-\eqref{eq:thm:stab2:f} become: the method is unconditionally stable when $\eps/(\sigma_m h)\leq 1/4$, otherwise it is conditionally stable under the time step condition 
$\Dt\leq \frac{4\eps^2 h}{4\eps-\sigma_m h}.$
Note that this stability condition 
is the same for both T and OG models, and is used in  \cite{peng2018stability} for numerical experiments.

\item[ii.)] We next consider a piecewise constant $\omega$, taking value either $1$ or $0$. To have the largest possible unconditional  stability region, our analysis suggests 
$
\omega={\bf 1}_{\{\eps/(\sigma_m h)\leq 1/4\}},
$
and the respective stability results for the IMEX1-LDG1 method become: the method is unconditionally stable when $\eps/(\sigma_m h)\leq 1/4$, and it is conditionally stable when
\begin{equation}
\Dt\leq \frac{2\eps h+\sigma_m h^2}{\beta}.
\label{eq:thm:stab7:f}
\end{equation}
Note when $\omega=0$, our IMEX1-LDG1 method is just the DG1-IMEX1 method in \cite{AP1, jang2014analysis}, with \eqref{eq:thm:stab7:f} as the respective time step condition for stability. The results imply that, if we apply the IMEX1-LDG1 method with $\omega=1$ in the relatively diffusive regime, namely $\eps/(\sigma_m h)\leq 1/4$, and apply the DG1-IMEX1 method elsewhere, the stability condition  will be inherited from the method used in each regime. %This  intuitive yet not so obvious fact is confirmed  by our analysis. 
\item[iii.)]  The final case is for $\omega=\exp(-\eps/(\sigma_m h))$. Note that  $\omega>1/2$ is equivalent to $\eps/(\sigma_m h)<r_*$ with $r_*=\ln(2)\approx 0.69314718$, and  the second inequality in \eqref{eq:thm:stab1:f} is equivalent to $\eps/(\sigma_m h)\leq r_\dag$, where $r_{\dag}\approx  0.19589899$ is the root of $x=(2-e^x)/4$.  While the stability results in \eqref{eq:thm:stab1:f}-\eqref{eq:thm:stab2:f} are straightforward when $\eps/(\sigma_m h)\leq r_\dag$ and when $\eps/(\sigma_m h)\geq r_*$, the results when $\eps/(\sigma_m h)\in (r_\dag, r_*)$ would depend on the model.  
With some calculation, one can obtain the stability results for the 
IMEX1-LDG1 method with this weight function,
\begin{equation}
\textrm{T model}: \quad \Dt\leq \left\{
\begin{array}{ll}
\infty & \text{when} \; \eps/(\sigma_m h)\leq r_\dag\\
 \frac{ 2\eps^2 h }{2 \eps-\Big(1-\exp(\eps/(\sigma_m h))/2\Big)\sigma_m h} &  \text{when} \; \eps/(\sigma_m h)\in (r_\dag, r_*)\\
(2\eps h+\sigma_m h^2)/4 & \text{when} \; \eps/(\sigma_m h)\geq r_*
\end{array}
\right., 
\end{equation}
\begin{equation}
\textrm{OG model}: \quad \Dt\leq \left\{
\begin{array}{ll}
\infty & \text{when} \; \eps/(\sigma_m h)\leq r_\dag\\
 \frac{ 2\eps^2 h }{2 \eps-\Big(1-\exp(\eps/(\sigma_m h))/2\Big)\sigma_m h} &  \text{when} \; \eps/(\sigma_m h)\in (r_\dag, r_\circ)\\
(2\eps h+\sigma_m h^2)/3 & \text{when} \; \eps/(\sigma_m h)\geq r_\circ
\end{array}
\right. . 
\end{equation}
Here $r_\circ\approx 0.38161849$ is the root of $(2x+1)/3=2x^2/\Big(2x-1+\exp(x)/2\Big)$. 
\end{itemize}
\end{rem}

%%%%%%%%%%%%%%%%%%%%%%%%%%%%%%%%%%%%%%%%%%%%%%%%%%%%%%%%%%%%%%%%%%
%%
%% Absorption terms
%%
%%%%%%%%%%%%%%%%%%%%%%%%%%%%%%%%%%%%%%%%%%%%%%%%%%%%%%%%%%%%%%%%%%

\section{Asymptotic preserving (AP) property}
\label{sec:ap}

In this section, we will state the main theorem on the AP property of the IMEX1-LDG method when the initial data is well-prepared, namely, $g+v\df_x\rho/\sigma_s=O(\varepsilon)$ at $t=0$. The proof will be established in Section \ref{sec:ap:proofs} based on uniform stability property of the method. 
With $W=\rho, q, g, u$,  we write $W_\varepsilon|_{t=0}=W_\varepsilon^0$, $W|_{t=0}=W_0$, and denote the numerical solution at time $t^n$ as $W_\AP^n$ to emphasize the dependence on $h$, $\Dt$, $\varepsilon$.
Here
$q^0_\varepsilon=\df_x\rho_\eps^0$ and $q_0=\df_x\rho^0$
are   weak derivatives of $\rho^0_\varepsilon$ and $\rho_0$, respectively.
The following assumptions are made in this section for the initial data and weight function $\omega$. 

\medskip
\noindent
{\bf Assumption 1} (weak convergence and being {\em well-prepared})
\begin{align}
\rho_\varepsilon^0\rightharpoonup \rho_0,& \quad\text{in}\quad L^2(\Omega_x) \quad\text{as} \quad\varepsilon\rightarrow 0, \label{weak-cvg1}\\
\lgl \zeta g_\varepsilon^0\rgl\rightharpoonup \lgl \zeta g_0\rgl, &\quad\text{in}\quad L^2(\Omega_x) \quad\text{as} \quad\varepsilon\rightarrow 0, \quad \forall \zeta\in L^2(\Omega_v),
\label{weak-cvg2}\\
\lgl \zeta (g_\varepsilon^0+v\sigma_s^{-1}q_\varepsilon^0) \rgl\rightharpoonup 0, &\quad\text{in}\quad L^2(\Omega_x) \quad\text{as} \quad\varepsilon\rightarrow 0, \quad \forall \zeta\in L^2(\Omega_v). \label{weak-wp}
\end{align}
{\bf  Assumption 2} (boundedness of initial data)
\begin{align}
\sup_{\varepsilon}||\rho^0_\varepsilon||<\infty,\quad\sup_{\varepsilon}|||g_\varepsilon^0|||<\infty,
 \quad\text{and}
 \quad\sup_{\varepsilon}||q_\varepsilon^0||<\infty.
\end{align}
{\bf  Assumption 3} (boundedness for $\omega$)  For any $h$, there exists $\varepsilon_0(h)$, such that 
\beq
2/3 <\omega<2, \quad \forall \eps<\eps_0(h).
\eeq

The assumption for $\omega=\omega(\eps/(\sigma_mh))$ is reasonable due to its property \eqref{w:prop:0}. The next theorem is our main result in terms of the AP property of the IMEX1-LDG method, defined as \eqref{eq:FDG:1T} with \eqref{eq:bilinear:def}-\eqref{eq:init0}.

%%%%%%%%%%%%%%%%%%%%%%%%%%%%%%%
%%  statement of the theorem %%
%%%%%%%%%%%%%%%%%%%%%%%%%%%%%%%
\begin{thm}
\label{thm:AP}
Let the mesh size $h$ be fixed. For any time step size $\Dt$,
there exist unique
$\rho_\LIM^n, u_\LIM^n \in U_h^k$ and $g_\LIM^n\in G_h^k$ for $n\geq 0$, $q_\LIM^n \in U_h^k$ for $n\geq 1$, such that
\begin{subequations}
\label{eq:f:lim}
\begin{align}
&\lim_{\varepsilon\rightarrow 0}W_\AP^n=W_\LIM^n,\quad W=\rho, q, u
\label{eq:aa:-3}\\
&\lim_{\varepsilon\rightarrow 0}\lgl \zeta, g_\AP^n(x,\cdot)\rgl =\lgl \zeta, g_\LIM^n(x, \cdot)\rgl,\quad \forall \zeta\in L^2(\Omega_v),\;\;\forall x\in \Omega_x,
\label{eq:aa:-2}\\
&\lim_{\varepsilon\rightarrow 0}\lgl \zeta, (g_\AP^n, \psi) \rgl =\lgl \zeta, (g_\LIM^n, \psi)\rgl,\quad \forall \zeta\in L^2(\Omega_v), \;\;\forall \psi\in L^2(\Ox).
\label{eq:aa:-1}
\end{align}
\end{subequations}
Furthermore, they satisfy the following scheme
\begin{subequations}
\label{eq:lim:s}
\begin{align}
\label{eq:lim1}
(q_\LIM^{n+1},\testQ)+d_h(\rho_\LIM^{n+1},\testQ)&=0,
\quad\forall\testQ\in U_h^k,\\
\label{eq:lim4}
(\sigma_s u_\LIM^{n+1},\testU)&=(q_\LIM^{n+1},\testU)
\quad\forall\testU\in U_h^k,
\\
\label{eq:lim2}
(\frac{\rho_\LIM^{n+1}-\rho_\LIM^n}{\Dt},\testR)&=\lgl v^2 \rgl  l_h(u_\LIM^{n+1},\testR)-(\sigma_a \rho_\LIM^{n+1},\testR),\quad \forall\testR\in U_h^k,\\
\pi_h(\sigma_s g_\LIM^{n+1})&=- v q_\LIM^{n+1}, \quad  g_\LIM^n+vu^n_\LIM=0,
\label{eq:lim3}
\end{align}
\end{subequations}
for $n\geq 0$, with the initial data $\rho_{\LIM}^0=\pi_h\rho_0$.
This scheme is consistent and stable for the limiting equation \eqref{eq:AP1:3.1.1}, it involves a standard LDG method in space and backward Euler method in time. Therefore the IMEX1-LDG method is AP.
When the velocity space is discrete such as $\Omega_v=\{-1, 1\}$,
 \eqref{eq:aa:-2}-\eqref{eq:aa:-1} can be replaced by a stronger form
 \beq
\lim_{\varepsilon\rightarrow 0}g_\AP^n(\cdot, v)=g_\LIM^n(\cdot, v), \quad \forall v\in \Omega_v.
\label{eq:aa:-4}
\eeq
\end{thm}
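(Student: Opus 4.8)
The plan is to follow the standard route for establishing the AP property of such fully discrete schemes (as in \cite{jang2014analysis}): first derive $\varepsilon$-uniform bounds on the iterates from the stability analysis; then extract subsequences converging as $\varepsilon\to0$; pass to the limit in \eqref{eq:FDG:1T} to identify the limiting scheme \eqref{eq:lim:s}, in particular the local equilibrium $g_\LIM^n+vu_\LIM^n=0$; use unique solvability of \eqref{eq:lim:s} to upgrade subsequential convergence to convergence of the whole sequence; and finally invoke consistency and stability of \eqref{eq:lim:s} for \eqref{eq:AP1:3.1.1} to conclude.

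For the uniform bounds, I fix $h$ and restrict to $\varepsilon$ small enough (depending only on $h$) that $\omega>\tfrac12$ and $\tfrac{\varepsilon}{\sigma_m h}\le\lambda_\star$ for $k\ge1$ (respectively $\le\lamzero(\tfrac1{2\omega})$ for $k=0$), which is legitimate by \eqref{w:prop:0} and Assumption 3. Theorem \ref{thm:mu} (equivalently Theorem \ref{thm:stab}) then gives that the method is unconditionally $\mu$-stable with $\mu=\mu_\star<1$ for $k\ge1$ and $\mu=\tfrac1{2\omega}<1$ for $k=0$, so $E_{h,\mu}^{n}\le E_{h,\mu}^{1}$ for all $n\ge1$ and \emph{any} $\Dt$. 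Bounding $E_{h,\mu}^{1}$ uniformly in $\varepsilon$ from the $n=0$ step of \eqref{eq:FDG:1T} together with Assumption 2, and reading off the four nonnegative pieces in \eqref{eqn:energy-mu}, I obtain uniform-in-$\varepsilon$ bounds on $\|\rho_h^n\|$, on $\varepsilon|||g_h^{n-1}|||$, and --- crucially, \emph{because} $\mu<1$ --- on $|||g_h^{n-1}|||_s$ and hence on $|||g_h^{n-1}|||$ itself (with $h,\Dt$ fixed). The bounds on $\|q_h^n\|$ and $\|u_h^n\|_s$ (hence $\|u_h^n\|$, using $\sigma_s\ge\sigma_m>0$) then follow from \eqref{eq:FDG:1T:a}--\eqref{eq:FDG:1T:c}, whose solution operators are bounded for the fixed mesh.

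Since all iterates lie in finite-dimensional-in-$x$ spaces and $\{g_h^n\}$ is bounded in $L^2(\Omega_v;U_h^k)$, along a subsequence $\varepsilon_m\to0$ I get, for each $n$, strong convergence $\rho_{\APM}^n,q_{\APM}^n,u_{\APM}^n\to\rho_\LIM^n,q_\LIM^n,u_\LIM^n$ in $U_h^k$ and weak convergence $g_{\APM}^n\rightharpoonup g_\LIM^n$ in $L^2(\Omega_v;U_h^k)$, the latter already yielding \eqref{eq:aa:-2}--\eqref{eq:aa:-1}. Passing to the limit in \eqref{eq:FDG:1T:a}, \eqref{eq:FDG:1T:c} is immediate and gives \eqref{eq:lim1}, \eqref{eq:lim4}. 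For the micro equation I multiply \eqref{eq:FDG:1T:d} by $\varepsilon^2$; the terms $\varepsilon^2(\tfrac{g_h^{n+1}-g_h^n}{\Dt},\cdot)$ and $\varepsilon\,b_{h,v}(g_h^n,\cdot)$ vanish in the limit --- this is exactly where the uniform bound on $|||g_h^n|||$ rather than merely on $\varepsilon|||g_h^n|||$ is essential --- leaving $(\sigma_s g_\LIM^{n+1},\testG)=v\,d_h(\rho_\LIM^{n+1},\testG)$ for all $\testG\in G_h^k$. Rewriting the right-hand side via \eqref{eq:lim1} as $-v(q_\LIM^{n+1},\testG)$ gives $\pi_h(\sigma_s g_\LIM^{n+1})=-vq_\LIM^{n+1}$, the first half of \eqref{eq:lim3}; combining this with $q_\LIM^{n+1}=\pi_h(\sigma_s u_\LIM^{n+1})$ from \eqref{eq:lim4} yields $\pi_h\big(\sigma_s(g_\LIM^{n+1}+vu_\LIM^{n+1})\big)=0$, and since $g_\LIM^{n+1}+vu_\LIM^{n+1}$ lies in $U_h^k$ for each $v$, testing against itself and using $\sigma_s>0$ forces $g_\LIM^{n+1}+vu_\LIM^{n+1}=0$. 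Using $\omega\to1$, this local equilibrium, and $\langle v(g_h^n+\omega vu_h^n)\rangle=\langle vg_h^n\rangle+\omega\langle v^2\rangle u_h^n\to0$, the limit of \eqref{eq:FDG:1T:b} is \eqref{eq:lim2}. The $n=0$ local equilibrium, not produced by \eqref{eq:FDG:1T:d}, comes instead from well-preparedness: since $g_h^0=\pi_h g_\varepsilon^0$ and $u_h^0=\pi_h(\sigma_s^{-1}q_\varepsilon^0)$, testing \eqref{weak-wp} against the finitely many basis functions of $U_h^k$ gives $\langle\zeta(g_\LIM^0+vu_\LIM^0)\rangle=0$ for all $\zeta$, hence $g_\LIM^0+vu_\LIM^0=0$, while $\rho_\LIM^0=\pi_h\rho_0$ follows from \eqref{weak-cvg1}.

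Finally, the limiting scheme \eqref{eq:lim:s} with $\rho_\LIM^0=\pi_h\rho_0$ is uniquely solvable --- it is the standard LDG/backward-Euler discretization of \eqref{eq:AP1:3.1.1}, and its solvability is Lemma \ref{lem:sol} with $\gamma_1=\Dt\langle v^2\rangle$, $\gamma_2=\Dt$. Hence every subsequence of the original sequence admits a further subsequence converging to the \emph{same} limit, so the whole sequence converges, which is \eqref{eq:aa:-3} (and \eqref{eq:aa:-4} when $\Omega_v$ is discrete, since then $G_h^k$ is finite-dimensional and weak convergence is strong). Consistency and stability of \eqref{eq:lim:s} for \eqref{eq:AP1:3.1.1} being classical, the commuting diagram holds and the method is AP. I expect the main obstacle to be the uniform-in-$\varepsilon$ control of $g_h^n$ itself, not just of $\varepsilon g_h^n$ --- which is why the argument must be run through the $\mu$-stability with $\mu<1$, the plain energy $E_h^n$ of Theorem \ref{thm:stab_nmu} being insufficient --- together with the bookkeeping needed to push the weak-convergence and well-preparedness hypotheses on the continuous initial data through the $L^2$ projections onto the discrete initial data.
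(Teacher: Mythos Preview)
Your proposal is correct and follows essentially the same route as the paper: uniform-in-$\varepsilon$ bounds via $\mu$-stability with $\mu<1$, compactness and subsequential limits, passage to the limit in the scheme to obtain \eqref{eq:lim:s}, and uniqueness of the limiting scheme via Lemma~\ref{lem:sol} to upgrade to full convergence. The only cosmetic difference is the choice of $\mu$: the paper fixes $\mu=\tfrac34$ (using Assumption~3 to guarantee $\tfrac34\in(\tfrac1{2\omega},1]$), whereas you take $\mu=\mu_\star$ (or $\tfrac1{2\omega}$ for $k=0$); both give $1-\mu$ bounded away from zero as $\varepsilon\to0$, which is the point.
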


\begin{rem}\label{rem:uh}
Alternative to the modal form of the LDG discretization adopted in this work, one can instead consider its nodal form \cite{hesthaven2007nodal}. Most of our analysis in this work can be extended to the resulting nodal methods, with one main difference in how the local equilibrium being satisfied as $\eps\rightarrow 0$. More specifically, using the nodal form, the equations in \eqref{eq:lim:s} containing $\sigma_s$ will be replaced by their nodal counterpart, namely,
$$\sigma_s(x_*)g^{n}_\LIM(x_*, v)=-vq^{n}_\LIM(x_*),
\quad \sigma_s(x_*)u_\LIM^n(x_*)=q_\LIM^n(x_*),$$
where $x_*$ is any nodal point in the discretization. Besides, the absorption terms $\sigma_a \rho$ and $\sigma_a g$ can be treated explicitly in the methods, and interested readers can refer to  \cite{peng2020thesis} for more details on the impact to stability and rigorous AP property. 
\end{rem}

%%%%%%%%%%%%%%%%%%%%%%%%%%%%%%%%%%%%%%%%
% proof for Theorem \ref{thm:mu} 
%%%%%%%%%%%%%%%%%%%%%%%%%%%%%%%%%%%%%%%%
\section{Proof for stability: Theorem \ref{thm:stab_nmu} and Theorem \ref{thm:mu} }
\label{sec:stab:proofs}

In this section, we will  present the proof for Theorem \ref{thm:mu} first and then Theorem \ref{thm:stab_nmu}.

\begin{proof}[\bf{Proof of Theorem  \ref{thm:mu}.}] 
 Let $n\geq 1$. Take $\testR=\rho_h^{n+1}$ in \eqref{eq:FDG:1T:b} and use
Lemma \ref{lem:1} and Proposition \ref{prop:sol}, 
we get
%%%%%%%%%%%%%
\begin{align}
%=
& 
\big(\frac{\rho_h^{n+1}-\rho_h^n}{\Dt}, \rho_h^{n+1}\big)+l_h(\lgl v g_h^n\rgl, \rho_h^{n+1})-\omega \langle v^2\rangle l_h(u_h^{n+1}-u_h^n,\rho_h^{n+1})\notag\\
=& 
\big(\frac{\rho_h^{n+1}-\rho_h^n}{\Dt}, \rho_h^{n+1}\big)
+\lgl  v d_h(  \rho_h^{n+1},  g_h^n)\rgl+\omega \langle v^2\rangle (\sigma_s (u_h^{n+1}-u_h^n),u_h^{n+1}) \notag\\
=& 
\frac{1}{2\Dt}\left(||\rho_h^{n+1}||^2-||\rho_h^n||^2+||\rho_h^{n+1}-\rho_h^n||^2\right)
+\lgl  v d_h(  \rho_h^{n+1},  g_h^n)\rgl\notag\\
&\;\;\;\;\;+
\frac{\omega \lgl v^2\rgl}{2}(
||u_h^{n+1}||_s^2 -||u_h^{n}||_s^2+||u_h^{n+1}-u_h^n||_s^2 )=-(\sigma_a \rho_h^{n+1}, \rho_h^{n+1}).\label{eq:heheda:1}
\end{align}
Take $\testG=\eps^2 g_h^{n+1}$ in \eqref{eq:FDG:1T:d}, integrate over $\Omega_v$ in $v$,  and shift index $n$ to $n-1$, we get
\begin{align}
&\eps^2\lgl\big(\frac{g_h^{n}-g_h^{n-1} }{\Dt},g_h^n\big)\rgl+\eps \lgl b_{h,v}(g_h^{n-1},g_h^n)\rgl-\lgl v d_h(\rho_h^{n},g_h^n)\rgl\notag\\
=&\frac{\eps^2}{2\Dt}\left(|||g_h^n|||^2-|||g_h^{n-1}|||^2+|||g_h^n-g_h^{n-1}|||^2\right)+\eps \lgl b_{h,v}(g_h^{n-1},g_h^n)\rgl-\lgl v d_h(\rho_h^{n},g_h^n)\rgl \notag\\
=&-|||g_h^n|||_s^2 -\eps^2\lgl (\sigma_a g_h^n, g_h^n)\rgl.\label{eq:heheda:2}
\end{align}
Now we sum up \eqref{eq:heheda:1} and \eqref{eq:heheda:2},  with $E_h^n$ defined in \eqref{eq:energy_nmu}, and have
 \begin{align}
&\frac{1}{2\Dt}(E_h^{n+1}-E_h^{n} ) +\frac{1}{2\Dt}(||\rho_h^{n+1}-\rho_h^n||^2+\eps^2|||g_h^n-g_h^{n-1} |||^2)
+\frac{\omega  \lgl v^2\rgl}{2}||u_h^{n+1} -u_h^n||_s^2\notag\\
&+|||g_h^n|||_s^2+ \lgl vd_h(\rho_h^{n+1}-\rho_h^n,g_h^n)\rgl
-\eps\lgl\bh(g_h^n-g_h^{n-1},g_h^n)\rgl+\eps\lgl \bh(g_h^n,g_h^n)\rgl \leq 0. 
\label{eq:process2}
\end{align}

\smallskip
To estimate $\lgl vd_h(\rho_h^{n+1}-\rho_h^n,g_h^n)\rgl$ in \eqref{eq:process2},  
based on the scheme \eqref{eq:FDG:1T:a}-\eqref{eq:FDG:1T:c} and apply the Cauchy-Schwartz inequality, 
we get 
\begin{align}
|\lgl vd_h(\rho_h^{n+1}-\rho_h^n,g_h^n)\rgl|&=| d_h(\rho_h^{n+1}-\rho_h^n,  \lgl vg_h^n \rgl) | =|(q_h^{n+1}-q_h^n,\lgl vg_h^n\rgl) |
\notag\\
&=|(\sigma_s (u_h^{n+1}-u_h^n), \lgl v g_h^n\rgl)| \leq %\sqrt{\lgl v^2\rgl}||q_h^{n+1}-q_h^n||\cdot|||g_h^n|||.
\sqrt{\lgl v^2\rgl} |||g_h^n|||_s\; ||u_h^{n+1}-u_h^n||_s.
%\left(\sqrt {\lgl (g_h^n)^2\rgl},\sigma_s |u_h^{n+1}-u_h^n|\right).
\label{eq:process:dh}
\end{align}
The two terms in \eqref{eq:process2} involving the bilinear form $b_{h,v}$ can be handled  similarly as in \cite{jang2014analysis} (see its Lemma 3.2, particularly equations (3.22)-(3.24)). 
More specifically,  with $\lgl g_h^m\rgl=0$ in Proposition \ref{prop:sol}, utilizing the upwind treatment in the proposed scheme for $v\partial_x g$, in addition to a few applications of inverse inequalities \eqref{eq:inv} and Young's inequality, it can be shown that
\begin{equation}
\lgl \bh(g_h^n,g_h^n)\rgl =\left\lgl\sum_i\frac{|v|}{2}[g_h^n]_\iL^2 \right\rgl,\label{eq:upwind}
\end{equation}
\begin{align}
%&\lgl \bh(g_h^n,g_h^n)\rgl =\left\lgl\sum_i\frac{|v|}{2}[g_h^n]_\iL^2 \right\rgl,\label{eq:upwind}\\
&| \lgl \bh(g_h^{n}-g_h^{n-1} ,g_h^n) \rgl |\notag\\
&\leq (\frac{\theta}{\sigma_m}+\eta) |||g_h^n-g_h^{n-1}|||^2+\frac{\sigma_m}{4\theta}\left\lgl \sum_i \int_{I_i} (v\partial_x g_h^n)^2 dx\right\rgl+\frac{C_{inv} }{4\eta h}\sum_i \left\lgl (v[g_h^n]_\iL)^2\right \rgl\notag\\
&\leq(\frac{\theta}{\sigma_m}+\eta) |||g_h^n-g_h^{n-1}|||^2 +\frac{\widehat{C }_{inv}  ||v^2||_\infty}{4\theta h^2} |||g_h^n|||_s^2
%\left\lgl \left(\sigma_s g_h^n,g_h^n\right)\right\rgl
+\frac{C_{inv} ||v||_{\infty} }{2\eta h}\left\lgl \frac{|v|}{2}\sum_i [g_h^n]_\iL^2  \right\rgl.
\label{eq:process:lambda}
\end {align}
Here $\theta$ and $\eta$ are two  positive constants, which will be specified later.

One important step in this proof is to split $|||g_h^n|||_s^2$
% \lgl(\sigma_s g_h^n,g_h^n)\rgl$ 
in  \eqref{eq:process2} into two terms, each playing different roles,  according to some parameter $\mu\in[0,1]$ (additional conditions required for  $\mu$ will soon become clear),  with one term further rewritten based on the parallelogram identity,
\begin{align}
\label{eq:process:change_g}
|||g_h^n|||_s^2
=\mu|||g_h^n|||_s^2+ (1-\mu)\Big(\frac{1}{2}|||g_h^n|||_s^2-\frac{1}{2}|||g_h^{n-1}|||_s^2
+
\frac{1}{4}|||g_h^n-g_h^{n-1} |||_s^2 + \frac{1}{4}|||g_h^n+g_h^{n-1} |||_s^2\Big).
 \end{align}
We now combine \eqref{eq:process2}-\eqref{eq:process:change_g},  with the discrete energy $E_{h,\mu}^{n}$ defined in \eqref{eqn:energy-mu},
 and reach
\begin{align}
 \label{eq:process:final}
& \frac{1}{2\Dt} (E^{n+1}_{h,\mu}-E^n_{h,\mu})+\eps\left(1-\frac{C_{inv}||v||_\infty}{2\eta h} \right) \left\lgl \frac{|v|}{2}\sum_i[g_h^n]_\iL^2 \right\rgl \\
           &+\left(\frac{\eps^2}{2\Dt}+\frac{1-\mu}{4}\sigma_m-\eps(\frac{\theta}{\sigma_m}+\eta) \right)|||g_h^n-g_h^{n-1} |||^2+(1-\mu)||| \frac{g_h^n+g_h^{n-1} }{2}|||_s^2+\frac{1}{2\Dt}||\rho_h^{n+1}-\rho_h^n||^2 \notag \\
           &+\frac{\omega \lgl v^2\rgl}{2} ||u_h^{n+1}-u_h^n||_s^2-\sqrt{\lgl v^2\rgl} |||g_h^n|||_s\; ||u_h^{n+1}-u_h^n||_s 
          +\left(\mu-\eps\frac{\widehat{C}_{inv}||v^2||_\infty  }
           {4\theta h^2}\right) |||g_h^n|||_s^2 \leq 0.\notag
\end{align}
In order for the discrete energy to be non-increasing, namely,  $ E_{h,\mu}^{n+1}\leq E_{h,\mu}^{n}$, we require the  quadratic form in the final row of \eqref{eq:process:final} to be non-negative, and this can be ensured by a non-negative discriminant, leading to 
\beq
\mu-\eps\frac{\widehat{C}_{inv}||v^2||_\infty }{4\theta h^2}\geq \frac{1}{2\omega}.
\label{eq:ineq:1}
\eeq
Additionally, we also require
\begin{align}
1-\frac{C_{inv} ||v||_\infty}{2\eta h}&\geq 0
 \label{eq:ineq:2} ,\\
\frac{\eps^2}{2\Dt}+\frac{1-\mu}{4}\sigma_m-\eps(\frac{\theta}{\sigma_m}+\eta)&\geq 0
\label{eq:ineq:3}.
\end{align}

The inequality \eqref{eq:ineq:1} implies that  $\mu$ needs to be restricted as   $\mu>\frac{1}{2\omega}$. 
We now choose
$$\frac{\theta}{\sigma_m}=\eta=\frac{1}{2}\left(\frac{\eps}{2\Dt}+\frac{1-\mu}{4\eps}\sigma_m \right), $$
and with this,  \eqref{eq:ineq:3} is satisfied automatically, while \eqref{eq:ineq:2} becomes
\beq
\frac{\eps^2}{\Dt}\geq\frac{4 C_{inv}||v||_\infty\eps-(1-\mu)\sigma_m h }{2 h} ,
\label{eq:result:1}
\eeq
and \eqref{eq:ineq:1} is now 
%leads to
\beq
\frac{\eps^2}{\Dt}\geq
\frac{2\eps^2\widehat{C}_{inv}||v^2||_\infty-(1-\mu)(\mu-\frac{1}{2\omega})\sigma_m^2h^2}{2 (\mu-\frac{1}{2\omega} )\sigma_m h^2}.
\label{eq:result:2}
\eeq
When  $\frac{\eps}{\sigma_m h}\leq\frac{1-\mu}{4C_{inv}||v||_\infty }$, the right hand side of \eqref{eq:result:1} is non-positive, hence \eqref{eq:result:1} holds for any time step $\Dt$. Otherwise, the  time step needs to satisfy $\Dt\leq \tau_{\eps,h,2}(\mu)$ with $\tau_{\eps,h,2}(\mu)$ defined in \eqref{eqn:tau2}. 
Similarly,  when $\frac{\eps}{\sigma_m h}\leq\sqrt{\frac{(1-\mu)(\mu-\frac{1}{2\omega} ) } {2\widehat{C}_{inv}||v^2||_\infty} } $, the right hand side of \eqref{eq:result:2} is non-positive, hence \eqref{eq:result:2} holds for any time step $\Dt>0$. Otherwise,
the time step needs to satisfy $\Dt\leq \tau_{\eps,h,1}(\mu)$ with $\tau_{\eps,h,1}(\mu)$ defined in \eqref{eqn:tau1}. 
The discussions so far can be summarized into  the claims in Theorem \ref{thm:mu} when $k\geq 1$. 

When $k=0$, we have  $\partial_x g_h^{n}=0$,  and the estimate in \eqref {eq:process:lambda} can be replaced by
\beq
| \lgl \bh(g_h^{n}-g_h^{n-1} ,g_h^n) \rgl \leq\eta |||g_h^n-g_h^{n-1}|||^2+\frac{C_{inv} ||v||_{\infty} }{2\eta h}\left\lgl \frac{|v|}{2}\sum_i [g_h^n]_\iL^2  \right\rgl,
\label{eq:process:k0}
\eeq
and all analysis up to \eqref{eq:ineq:3}  holds without the terms containing $\theta$. Specifically, \eqref{eq:ineq:1}-\eqref{eq:ineq:3} become
\beq
\mu \geq \frac{1}{2\omega}, \quad
1-\frac{C_{inv} ||v||_\infty}{2\eta h}\geq 0,\quad
\frac{\eps^2}{2\Dt}+\frac{1-\mu}{4}\sigma_m -\eps\eta\geq 0
\label{eq:ineq:4-6}.
\eeq
Now take
$$
\eta= \frac{\eps}{2\Dt}+\frac{1-\mu}{4\eps}\sigma_m
$$
in \eqref{eq:ineq:4-6}, and follow a similar analysis as  above, one reaches the results for $k=0$.
\end{proof}

\begin{proof}[\bf{Proof of Theorem  \ref{thm:stab_nmu}.}]   The proof can be established by starting with  the equation \eqref{eq:process2}, and then following almost the identical  analysis  in \cite{jang2014analysis} (particularly, see equations (3.22), (3.26)-(3.28), (3.36)-(3.41) in \cite{jang2014analysis}), together with $|||g_h^n|||_s^2\geq \sigma_m |||g_h^n|||^2$ to deal with the general scattering coefficient $\sigma_s(x)$. The details are omitted.
\end{proof}

%\subsection{Optimization of the stability condition by choosing $\mu$}
\section{Proof for stability: Theorem \ref{thm:stab} }
\label{sec:stab:proofs:opt}

When $k=0$, the optimization is straightforward, and the detail is omitted. The remaining of this section will be devoted to the case when $k\geq 1$, for which the analysis is more technically involved.  
From here on, we assume $1\leq k\leq 9$. With this,  we have $\mK>1$ and $\widehat{C}_{inv}>0$. We also assume $\omega>1/2$, though not all preliminary results next depend on this assumption. One can refer to Table \ref{tab:notation} for a summary of notation.

\subsection{Preliminary lemmas}

We first state and prove some preparatory lemmas.  Lemma \ref{lem:muS:mono} and Lemma \ref{lem:tau12} can be directly verified and the proofs are skipped. 
%%%%%%%%%%%%%%%%%
% lemma
%%%%%%%%%%%%%%%%%

\begin{lem} 
\label{lem:muS:mono}
\begin{itemize}
\item[(i)]  With $\omega>1/2$, there always holds   $\mu_\star\in (\frac{1}{2\omega}, 1)$.
\item[(ii)]  With $\mu_S(\lambda)$ defined
 in \eqref{eq:thm:stab5:o:3}, let its inverse be  $\lambda_S(\mu): =2(\mu-\frac{1}{2\omega})\frac{\bbb}{\aaa}$. 
\begin{itemize}
\item Both $\mu_S(\lambda)$ and $\lambda_S(\mu)$ are monotonically increasing. And $\mu_S(\lambda)>\frac{1}{2\omega}, \forall \lambda>0$.
\item With $\widehat{\lambda}_\star=\lambda_S(1)$, we have $\mu_S(\widehat{\lambda}_\star) = 1$. In addition, $ \mu_S(\lambda)<1\Leftrightarrow \lambda <\widehat{\lambda}_\star$. 
\item  $\mu_S(\lambda_\star)= \mu_\star$ and $\lambda_S(\mu_\star)=\lambda_\star$.
\end{itemize}
\end{itemize}
\end{lem}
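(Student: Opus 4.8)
\textbf{Proof proposal for Lemma \ref{lem:muS:mono}.}

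The plan is to verify each bullet by direct computation, exploiting that $\mu_S$ is an affine function of $\lambda$ with positive slope, and that $\lambda_\star$, $\mu_\star$, $\widehat\lambda_\star$ are all built from the single quantity $\mK=8(\bbb)^2/(\aaa)$. First I would recall the closed forms: $\mu_S(\lambda)=\tfrac{1}{2\omega}+\tfrac12\lambda\tfrac{\aaa}{\bbb}$, so its inverse is $\lambda_S(\mu)=2(\mu-\tfrac{1}{2\omega})\tfrac{\bbb}{\aaa}$; since $\widehat C_{inv}, C_{inv}, \|v\|_\infty, \|v^2\|_\infty>0$ (using $1\le k\le 9$ for $\widehat C_{inv}>0$), the coefficient $\tfrac{\aaa}{\bbb}$ is positive, giving monotone increase of both $\mu_S$ and $\lambda_S$, and $\mu_S(\lambda)>\tfrac{1}{2\omega}$ for all $\lambda>0$ immediately. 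The identity $\mu_S(\widehat\lambda_\star)=1$ is just $\mu_S(\lambda_S(1))=1$ by definition of the inverse, and then $\mu_S(\lambda)<1\iff\lambda<\widehat\lambda_\star$ follows from monotonicity.

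For part (i), I would rewrite $\mu_\star=\tfrac{1+\tfrac{1}{2\omega}\mK}{1+\mK}$ as a convex combination: $\mu_\star=\tfrac{1}{1+\mK}\cdot 1+\tfrac{\mK}{1+\mK}\cdot\tfrac{1}{2\omega}$, a weighted average of $1$ and $\tfrac{1}{2\omega}$ with strictly positive weights (here $\mK>1>0$ and $1+\mK>0$). Since $\omega>1/2$ gives $\tfrac{1}{2\omega}<1$, the average lies strictly between $\tfrac{1}{2\omega}$ and $1$, i.e. $\mu_\star\in(\tfrac{1}{2\omega},1)$.

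For the last two bullets of part (ii), namely $\mu_S(\lambda_\star)=\mu_\star$ and $\lambda_S(\mu_\star)=\lambda_\star$ (which are equivalent given the inverse relationship, so it suffices to prove one), I would substitute $\lambda_\star=\tfrac{2(1-\tfrac{1}{2\omega})C_{inv}\|v\|_\infty}{\widehat C_{inv}\|v^2\|_\infty+8(C_{inv}\|v\|_\infty)^2}$ into $\mu_S$ and simplify. Writing $A=\aaa=\widehat C_{inv}\|v^2\|_\infty$ and $B=\bbb=C_{inv}\|v\|_\infty$, so $\mK=8B^2/A$, we get $\lambda_\star=\tfrac{2(1-\tfrac{1}{2\omega})B}{A+8B^2}$, hence $\tfrac12\lambda_\star\tfrac{A}{B}=\tfrac{(1-\tfrac{1}{2\omega})A}{A+8B^2}$, and $\mu_S(\lambda_\star)=\tfrac{1}{2\omega}+\tfrac{(1-\tfrac{1}{2\omega})A}{A+8B^2}$; putting over a common denominator this equals $\tfrac{\tfrac{1}{2\omega}(A+8B^2)+(1-\tfrac{1}{2\omega})A}{A+8B^2}=\tfrac{A+\tfrac{1}{2\omega}8B^2}{A+8B^2}=\tfrac{1+\tfrac{1}{2\omega}\mK}{1+\mK}=\mu_\star$, after dividing numerator and denominator by $A$. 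The same substitution confirms $\mu_\star\in(\tfrac{1}{2\omega},1)$ is consistent with $\lambda_\star>0$.

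None of these steps is a genuine obstacle; the only thing requiring care is bookkeeping the two equivalent parametrizations of $\lambda_\star$ and $\mu_\star$ (the ``$C_{inv}$'' form versus the ``$\mK$'' form) and making sure the algebra of clearing denominators is done consistently, together with the standing sign conditions $A,B>0$ and $\omega>1/2$ that make all the relevant quantities positive and the convex-combination argument valid. I would present the computation compactly with the abbreviations $A,B$ and then translate back.
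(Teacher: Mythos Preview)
Your proposal is correct and matches the paper's intent: the paper states explicitly that this lemma ``can be directly verified and the proofs are skipped,'' so your direct computation via the affine structure of $\mu_S$, the convex-combination rewriting of $\mu_\star$, and the $A,B$ substitution is exactly the kind of verification the authors had in mind. There is nothing to add; your bookkeeping is accurate and the sign conditions you flag ($A,B>0$, $\omega>1/2$) are precisely the ones needed.
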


%%%%%
% lemma
%%%%%
\begin{lem}\label{lem:mu_star}
Consider $\mu\in (\frac{1}{2\omega}, 1]$, then 
\begin{itemize}
\item[(i)] 
\begin{equation}
\lamone(\mu)  \leq \lamtwo(\mu)\Longleftrightarrow \mu\leq \mu_\star \left(\Longleftrightarrow \frac{1}{2\omega}<\mu\leq \mu_\star<1\right),
\end{equation}
and $\lamone(\mu_\star) = \lamtwo(\mu_\star)=\lambda_\star$. In addition, $\lamone(\mu)$ is monotonically increasing on
$(\frac{1}{2\omega}, \mu_\star]$, and   $\lamtwo(\mu)$ is monotonically decreasing.
\item[(ii)] 
\begin{equation}
\lambda_S(\mu) \leq \lamone(\mu)\Longleftrightarrow \mu\leq \mu_\star \left(\Longleftrightarrow \frac{1}{2\omega}<\mu\leq \mu_\star<1\right).
\end{equation}
\item[(iii)] 
\beq
\widehat{\lambda}_\star>\lamone(\mu), \quad \widehat{\lambda}_\star> \lamtwo(\mu), \quad \forall \mu\in (\frac{1}{2\omega}, 1].
\eeq
\end{itemize}
\end{lem}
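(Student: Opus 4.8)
\textbf{Proof proposal for Lemma \ref{lem:mu_star}.}

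The plan is to treat the three statements essentially as elementary calculus/algebra facts about the explicit functions $\lamone$, $\lamtwo$, $\lambda_S$ collected in Table \ref{tab:notation}, using only the definition $\mK = 8(C_{inv}\|v\|_\infty)^2/(\widehat C_{inv}\|v^2\|_\infty) > 1$ (Lemma \ref{lem:mK}) and $\omega > 1/2$. First I would record the shorthand $a := \widehat C_{inv}\|v^2\|_\infty$, $b := C_{inv}\|v\|_\infty$, so that $\lamone(\mu)^2 = (1-\mu)(\mu - \tfrac{1}{2\omega})/(2a)$, $\lamtwo(\mu) = (1-\mu)/(4b)$, $\lambda_S(\mu) = 2(\mu - \tfrac{1}{2\omega})b/a$, and $\mK = 8b^2/a$. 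Throughout, $\mu \in (\tfrac{1}{2\omega}, 1]$ makes $1-\mu \ge 0$ and $\mu - \tfrac{1}{2\omega} > 0$, so all quantities are well-defined and nonnegative, and $\lamone(\mu) \le \lamtwo(\mu) \iff \lamone(\mu)^2 \le \lamtwo(\mu)^2$ (both sides nonnegative).

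For (i): squaring and clearing denominators, $\lamone(\mu)^2 \le \lamtwo(\mu)^2$ is equivalent to $(1-\mu)(\mu - \tfrac{1}{2\omega})/(2a) \le (1-\mu)^2/(16 b^2)$. If $\mu = 1$ both sides vanish and the inequality holds (consistent with $\mu_\star < 1$ from Lemma \ref{lem:muS:mono}(i)); if $\mu < 1$ we may divide by $1-\mu > 0$ to get $8b^2(\mu - \tfrac{1}{2\omega}) \le a(1-\mu)$, i.e. $\mK(\mu - \tfrac{1}{2\omega}) \le 1 - \mu$, i.e. $\mu(1 + \mK) \le 1 + \tfrac{1}{2\omega}\mK$, i.e. $\mu \le \mu_\star$ with $\mu_\star$ as in \eqref{eq:thm:stab5:o:1}; the equality case gives $\lamone(\mu_\star) = \lamtwo(\mu_\star)$, and evaluating either expression at $\mu_\star$ and simplifying yields $\lambda_\star$ in \eqref{eq:thm:stab3:o} (a short algebraic substitution using $1 - \mu_\star = (1 - \tfrac{1}{2\omega})/(1+\mK)$). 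Monotonicity of $\lamtwo$ is immediate since $1-\mu$ decreases; for $\lamone$ on $(\tfrac{1}{2\omega}, \mu_\star]$ I would differentiate $\mu \mapsto (1-\mu)(\mu - \tfrac{1}{2\omega})$, a downward parabola with vertex at $\mu = \tfrac12(1 + \tfrac{1}{2\omega})$, and check that $\mu_\star \le \tfrac12(1 + \tfrac{1}{2\omega})$ — equivalently $\tfrac{1 + \tfrac{1}{2\omega}\mK}{1+\mK} \le \tfrac{1 + \tfrac{1}{2\omega}}{2}$, which after clearing denominators reduces to $\mK \ge 1$, true by Lemma \ref{lem:mK}; hence $\lamone$ is increasing up to $\mu_\star$. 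For (ii): $\lambda_S(\mu) \le \lamone(\mu)$ with both sides nonnegative is equivalent to $\lambda_S(\mu)^2 \le \lamone(\mu)^2$, i.e. $4b^2(\mu - \tfrac{1}{2\omega})^2/a^2 \le (1-\mu)(\mu - \tfrac{1}{2\omega})/(2a)$; since $\mu - \tfrac{1}{2\omega} > 0$ we cancel one factor and multiply by $2a^2$ to get $8b^2(\mu - \tfrac{1}{2\omega}) \le a(1-\mu)$, which is exactly the inequality from part (i), hence equivalent to $\mu \le \mu_\star$. For (iii): $\widehat\lambda_\star = \lambda_S(1) = 2(1 - \tfrac{1}{2\omega})b/a$, and since $\lambda_S$ is strictly increasing (Lemma \ref{lem:muS:mono}(ii)) while $\mu \le 1$, we have $\lambda_S(\mu) \le \widehat\lambda_\star$ with equality only at $\mu = 1$; combined with part (ii), $\lamone(\mu) \ge \lambda_S(\mu)$ for $\mu \le \mu_\star$ does not immediately give the bound, so instead I would argue directly: for $\mu \le \mu_\star$, by part (i) $\lamone(\mu) \le \lamtwo(\mu) \le \lamtwo(\tfrac{1}{2\omega}^+)$... — more cleanly, $\lamone(\mu) \le \lamone(\mu_\star) = \lambda_\star = \lambda_S(\mu_\star) < \lambda_S(1) = \widehat\lambda_\star$ using strict monotonicity of $\lambda_S$ and $\mu_\star < 1$; and for $\mu \in (\mu_\star, 1]$, $\lamone(\mu) \le \lamtwo(\mu)$ fails but there $\lamone$ is on the decreasing branch past its vertex only if $\mu_\star$ exceeds the vertex, which we showed it does not, so on $(\mu_\star,1]$ the function $\lamone$ may still increase then decrease — here I would just bound $\lamone(\mu)^2 = (1-\mu)(\mu-\tfrac{1}{2\omega})/(2a) \le (1)(1 - \tfrac{1}{2\omega})/(2a)$ crudely and compare with $\widehat\lambda_\star^2 = 4b^2(1-\tfrac{1}{2\omega})^2/a^2$, i.e. check $(1-\tfrac{1}{2\omega})/(2a) < 4b^2(1-\tfrac{1}{2\omega})^2/a^2$, i.e. $a < 8b^2(1-\tfrac{1}{2\omega})$, i.e. $\mK > 1/(1 - \tfrac{1}{2\omega})$ — hmm, this needs $\omega$ not too close to $1/2$, so the crude bound is too lossy. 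Instead, since $\mu \le 1$ gives $1 - \mu \le 1 - \tfrac{1}{2\omega}$ is false in general... the clean route is: $\lamone(\mu)$ on $(\tfrac{1}{2\omega},1]$ attains its maximum at the parabola vertex or at an endpoint; its value there is $\le \lamone(\mu_\star')$ for the vertex $\mu_\star' = \tfrac12(1+\tfrac{1}{2\omega})$, giving $\lamone(\mu)^2 \le (1 - \tfrac{1}{2\omega})^2/(8a)$, and then $\widehat\lambda_\star^2 > \lamone(\mu)^2 \iff 4b^2(1-\tfrac{1}{2\omega})^2/a^2 > (1-\tfrac{1}{2\omega})^2/(8a) \iff 32 b^2 > a \iff \mK > 1/4$, true by Lemma \ref{lem:mK}. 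Similarly $\widehat\lambda_\star > \lamtwo(\mu)$: $\lamtwo(\mu) \le \lamtwo(\tfrac{1}{2\omega}) = (1 - \tfrac{1}{2\omega})/(4b)$, and $\widehat\lambda_\star = 2(1-\tfrac{1}{2\omega})b/a > (1-\tfrac{1}{2\omega})/(4b) \iff 8b^2 > a \iff \mK > 1$, true.)

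The main obstacle I anticipate is part (iii), specifically getting the strict inequality $\widehat\lambda_\star > \lamone(\mu)$ uniformly over the whole interval $(\tfrac{1}{2\omega}, 1]$ with a bound that is not too lossy — the cleanest path is to use the global maximum of $\lamone$ over that interval (which occurs at the parabola's vertex $\mu = \tfrac12(1 + \tfrac{1}{2\omega})$, giving $\max \lamone^2 = (1-\tfrac{1}{2\omega})^2/(8a)$) and then reduce the desired inequality to $\mK > 1/4$ via Lemma \ref{lem:mK}; the $\lamtwo$ half is easier and reduces to $\mK > 1$. Everything else is routine sign-chasing and quadratic manipulation, with $\mK > 1$ (Lemma \ref{lem:mK}) invoked exactly where the monotonicity of $\lamone$ and the $\mu_\star < 1$ comparisons are needed.
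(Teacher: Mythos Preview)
Your proposal is correct and takes essentially the same approach as the paper. After the exploratory detours in part (iii), you land on exactly the paper's argument: bound $\lamtwo$ by its value at the left endpoint $\mu=\tfrac{1}{2\omega}$ (reducing to $\mK>1$) and bound $\lamone$ by its global maximum at the parabola vertex $\mu=\tfrac12(1+\tfrac{1}{2\omega})$ (reducing to $4\mK>1$); parts (i) and (ii) are the same squaring-and-cancelling manipulations the paper uses.
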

\begin{proof} For $\mu\in (\frac{1}{2\omega}, 1]$,  to prove (i), 
\begin{align*}
\lamone(\mu)  \leq \lamtwo(\mu)&\Longleftrightarrow\sqrt{\frac{(1-\mu)(\mu-\frac{1}{2\omega}) }{2\widehat{C}_{inv} ||v^2||_\infty} } \leq \frac{1-\mu}{4C_{inv}||v||_\infty}\\
&\Longleftrightarrow
\frac{\mu-\frac{1}{2\omega} }{\aaa} \leq \frac{1-\mu}{8(\bbb)^2}
\Longleftrightarrow  \mu  \leq  \mu_\star.
\end{align*}
The equality is achieved at $\mu=\mu_\star$, with the value being  $\lambda_\star$.  The monotonicity of $\lamtwo(\mu)$ is straightforward.
For $\lamone(\mu)$, note that with $\mK>1$, we have
 $\mu_\star<\frac{1}{2}\left(1+\frac{1}{2\omega}\right)$, with $\frac{1}{2}\left(1+\frac{1}{2\omega}\right)$ being  where  $\lamone(\mu)$ achieves its maximum. This implies that $\lamone(\mu)$, whose square is a downward-facing parabola,  is monotonically increasing on $(\frac{1}{2\omega}, \mu_\star]$.

To prove (ii),  we proceed as below.
\begin{align*}
\lambda_S(\mu) \leq \lamone(\mu) \Longleftrightarrow &2(\mu-\frac{1}{2\omega})\frac{\bbb}{\aaa}
\leq 
\sqrt{\frac{(1-\mu)(\mu-\frac{1}{2\omega} ) } {2\widehat{C}_{inv} ||v^2||_\infty} }
\notag\\
\Longleftrightarrow &  (\mu-\frac{1}{2\omega})\frac{8(\bbb)^2}{\aaa} \leq 1-\mu 
\Longleftrightarrow  \mu \leq \mu_\star.
\end{align*}

To prove (iii), related to  $\lamtwo(\mu)$, given its being  monotonically decreasing, we only need to show 
$\widehat{\lambda}_\star>\lamtwo(\frac{1}{2\omega})$, which is ensured by $\mK>1$ as below.
\beq
\widehat{\lambda}_\star>\lamtwo(\frac{1}{2\omega})\Longleftrightarrow
2(1-\frac{1}{2\omega})\frac{\bbb}{\aaa}>\frac{1-\frac{1}{2\omega}}{4\bbb}
\Longleftrightarrow \mK>1.
\eeq
Related to $\lamone(\mu)$, from the proof of (i) of this lemma, we only need to verify 
$\widehat{\lambda}_\star>\lamone(\mu)|_{\mu= \frac{1}{2}(1+\frac{1}{2\omega})}.$ This can be argued as follows.
\beq
\widehat{\lambda}_\star>\lamone(\mu)|_{\mu= \frac{1}{2}(1+\frac{1}{2\omega})}\Longleftrightarrow
2(1-\frac{1}{2\omega})\frac{\bbb}{\aaa}>\frac{1-\frac{1}{2\omega}}{2\sqrt{2\aaa }}
\Longleftrightarrow 4\mK>1.
\eeq
This holds due to that $\mK>1$. 
\end{proof}

%%%%%%%%%%%%%%%%%%%%%%%%%%%%%%%%%%%%%%%%%%%%%%%%%%%%%%%%%%%%%%%%%%%%%%%%%%%%%
\begin{rem}\label{rem:geometry}
Lemmas  \ref{lem:muS:mono}-\ref{lem:mu_star} tell the properties  and the relative locations of the curves $\lambda=\lambda_S(\mu)$, $\lambda=\lamone(\mu)$ and $\lambda=\lamtwo(\mu)$. Particularly, 
\begin{itemize}
\item
According to Lemmas  \ref{lem:muS:mono}-\ref{lem:mu_star},   the curves $\lambda=\lambda_S(\mu)$, $\lambda=\lamone(\mu)$ and $\lambda=\lamtwo(\mu)$ intersect at $(\mu_\star,\lambda_\star)$.
\item 
According to Lemma \ref{lem:mu_star}, to the left of  $\mu=\mu_\star$, the graph of $\lambda=\lamtwo(\mu)$ is above  that of $\lambda=\lamone(\mu)$, which is above the graph of $\lambda=\lambda_S(\mu)$; to the right of $\mu=\mu_\star$,  the ordering is reversed. 
\end{itemize}
\end{rem}

It is important to know the relative locations of various curves to optimize the time step condition. For general weight function $\omega$, it is nontrivial to visualize these curves, yet their  relative locations and some special points are captured in Figure \ref{fig:proof}, which is for the constant weight function $\omega\equiv 1$.  The figure can also facilitate the readers to follow and understand the analysis in this section, which is given algebraically  for general $\omega$ and has a geometric interpretation  for the special case of $\omega\equiv 1$.

\begin{figure}[!h]
  \centering
  {\includegraphics[width=0.5\textwidth]{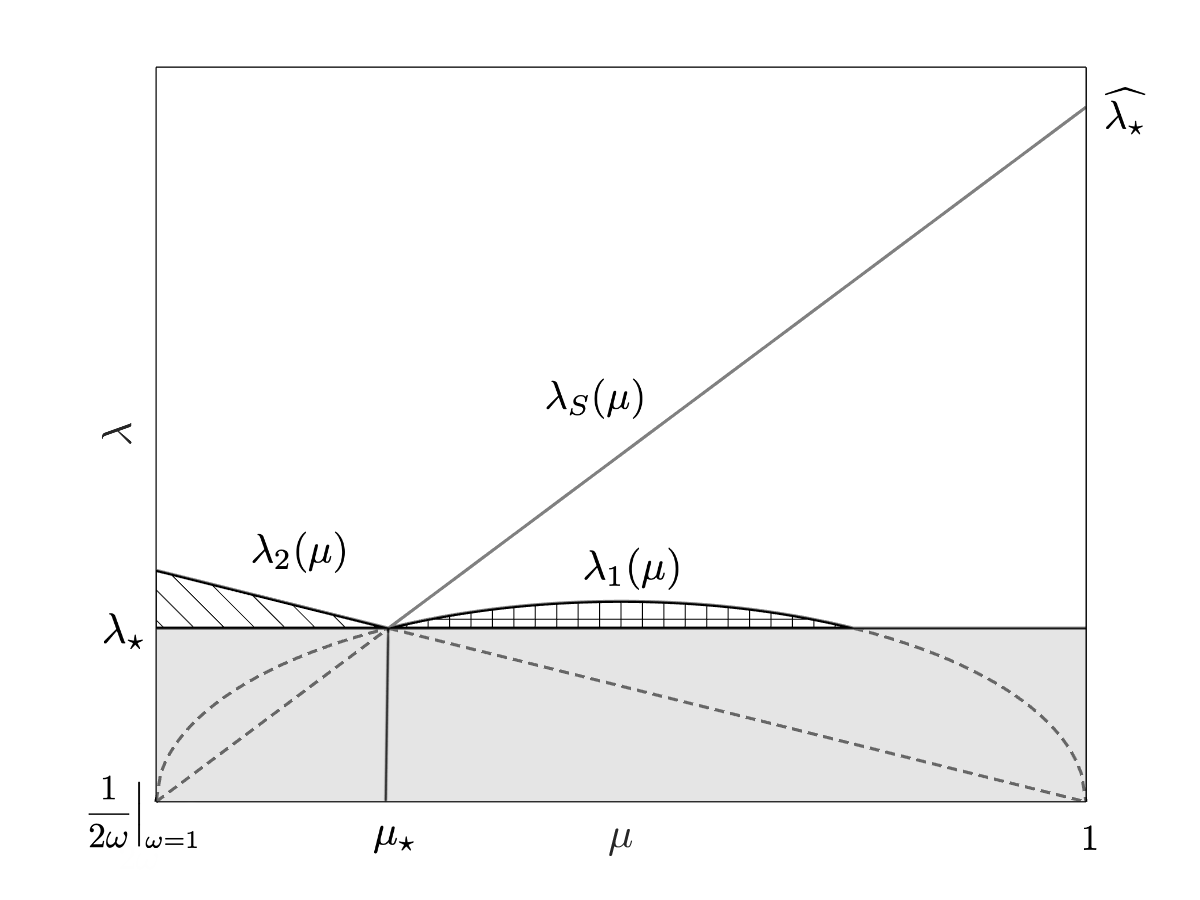}}
  \caption{ Plots with constant $\omega\equiv1$ to facilitate the understanding of  Lemmas \ref{lem:muS:mono}-\ref{lem:mu_star}. The scheme is: i) unconditionally stable when $\lambda=\eps/(\sigma_m h)$ and $\mu$ fall into the gray region, ii) $\mu$-stable under $\Dt\leq \tau_{\eps,h,1}(\mu)$ in the stripped region,  iii) $\mu$-stable under $\Dt\leq \tau_{\eps,h,2}(\mu)$ in the latticed region, and iv) $\mu$-stable under $\Dt\leq \min(\tau_{\eps,h,1}(\mu), \tau_{\eps,h,2}(\mu))$ in the  blank (white) region.}
  \label{fig:proof}
  \end{figure}

%%%%%%%%%%%%%%%%%%%%%%%%%%%%%%
% lemma: tau1(mu_s) and tau2(mu_s) are finite
%%%%%%%%%%%%%%%%%%%%%%%%%%%%%%
\begin{lem}\label{lem:tau12}
When $\frac{\eps}{\sigma_m h}> \max (\lamone(\mu), \lamtwo(\mu))$, both %$\tau_{\eps, h, 1}(\mu)$ and $\tau_{\eps, h, 2}(\mu)$ 
$\tauone(\mu)$ and $\tautwo(\mu)$ are finite, and they satisfy 
\begin{equation}
\tauone(\mu)=\tau_{\eps, h, 1}(\mu)\leq \tautwo(\mu)=\tau_{\eps, h, 2}(\mu)\Longleftrightarrow \mu\leq \mu_S(\frac{\eps}{\sigma_m h}) \Longleftrightarrow 
\lambda_S(\mu)\leq \frac{\eps}{\sigma_m h}.
\end{equation}
Moreover, $\tau_{\eps,h,1}(\mu_S(\frac{\eps}{\sigma_m h}) )= \tau_{\eps,h,2}(\mu_S(\frac{\eps}{\sigma_m h}) ).$
\end{lem}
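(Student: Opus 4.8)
\textbf{Proof proposal for Lemma \ref{lem:tau12}.}

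The plan is to work directly from the explicit formulas \eqref{eqn:tau1} and \eqref{eqn:tau2} for $\tau_{\eps,h,1}(\mu)$ and $\tau_{\eps,h,2}(\mu)$, and reduce every claimed equivalence to a polynomial inequality in $\lambda := \eps/(\sigma_m h)$ and $\mu$. First I would observe that under the hypothesis $\lambda > \max(\lamone(\mu),\lamtwo(\mu))$, the denominator $2\eps^2\widehat{C}_{inv}\|v^2\|_\infty - (1-\mu)(\mu-\tfrac1{2\omega})\sigma_m^2 h^2$ of $\tau_{\eps,h,1}(\mu)$ is positive (this is exactly $\lambda > \lamone(\mu)$, by the definition of $\lamone$ in \eqref{eqn:lam12}), and the denominator $4C_{inv}\|v\|_\infty\eps - (1-\mu)\sigma_m h$ of $\tau_{\eps,h,2}(\mu)$ is positive (this is exactly $\lambda > \lamtwo(\mu)$). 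Hence both $\tau_{\eps,h,1}(\mu)$ and $\tau_{\eps,h,2}(\mu)$ are finite and positive, which disposes of the first assertion and also legitimizes cross-multiplication in the comparison that follows.

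Next I would establish the central equivalence $\tau_{\eps,h,1}(\mu)\le\tau_{\eps,h,2}(\mu) \Leftrightarrow \lambda_S(\mu)\le\lambda$. Writing both $\tau$'s over their (positive) denominators and clearing, the inequality $\tau_{\eps,h,1}(\mu)\le\tau_{\eps,h,2}(\mu)$ becomes, after cancelling the common positive factor $2\eps^2 h$,
\begin{equation}
(\mu-\tfrac1{2\omega})\sigma_m \big(4C_{inv}\|v\|_\infty\eps-(1-\mu)\sigma_m h\big) \le \sigma_m h\big(2\eps^2\widehat{C}_{inv}\|v^2\|_\infty-(1-\mu)(\mu-\tfrac1{2\omega})\sigma_m^2 h^2\big)\big/\eps ,
\end{equation}
or rather, keeping things polynomial, multiply through by $\eps$ and watch the $(1-\mu)(\mu-\tfrac1{2\omega})$ terms cancel on the two sides; what remains is $4C_{inv}\|v\|_\infty(\mu-\tfrac1{2\omega})\sigma_m\eps^2 \le 2\widehat{C}_{inv}\|v^2\|_\infty\sigma_m h\eps^2$ — wait, more carefully one finds the surviving relation is linear in $\eps$, namely $2C_{inv}\|v\|_\infty(\mu-\tfrac1{2\omega})\eps \le \widehat{C}_{inv}\|v^2\|_\infty \sigma_m h$ after dividing by $2\eps\sigma_m$, i.e. $\lambda = \eps/(\sigma_m h)\ge 2(\mu-\tfrac1{2\omega})\frac{\widehat{C}_{inv}\|v^2\|_\infty}{\text{(appropriate factor)}}$. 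Matching the constant against the definition $\lambda_S(\mu)=2(\mu-\tfrac1{2\omega})\frac{\bbb}{\aaa}$ in Lemma \ref{lem:muS:mono} confirms this is precisely $\lambda\ge\lambda_S(\mu)$. The second equivalence $\lambda_S(\mu)\le\lambda \Leftrightarrow \mu\le\mu_S(\lambda)$ is then immediate from Lemma \ref{lem:muS:mono}(ii), since $\mu_S$ and $\lambda_S$ are mutually inverse and monotonically increasing.

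Finally, the identity $\tau_{\eps,h,1}(\mu_S(\lambda))=\tau_{\eps,h,2}(\mu_S(\lambda))$ follows because $\mu=\mu_S(\lambda)$ is exactly the boundary case $\lambda_S(\mu)=\lambda$ where the inequality above is an equality; substituting $\mu-\tfrac1{2\omega} = \tfrac12\lambda\frac{\aaa}{\bbb}$ (from the definition of $\mu_S$) into the two $\tau$-formulas and simplifying gives the common value displayed in \eqref{eq:thm:stab4:o}. The main obstacle here is purely bookkeeping: one must carry the constants $C_{inv},\widehat{C}_{inv},\|v\|_\infty,\|v^2\|_\infty,\sigma_m,\omega$ through the cross-multiplication without sign errors and verify that the $(1-\mu)(\mu-\tfrac1{2\omega})$ cross-terms genuinely cancel, leaving a relation \emph{linear} in $\lambda$; that cancellation is what makes the threshold come out to be the single curve $\lambda_S(\mu)$ rather than something more complicated. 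No inequality is actually subtle once the denominators are known to be positive, so I expect the proof to be short.
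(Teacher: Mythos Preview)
Your approach is correct and is precisely what the paper intends: the paper states that Lemma~\ref{lem:tau12} ``can be directly verified and the proofs are skipped,'' so a direct cross-multiplication from \eqref{eqn:tau1}--\eqref{eqn:tau2} (legitimate since both denominators are positive under the hypothesis) followed by cancellation of the $(1-\mu)(\mu-\tfrac{1}{2\omega})\sigma_m^2h^2$ terms is exactly the verification in question. Do clean up the arithmetic in your middle step---after cancellation the surviving inequality is $4C_{inv}\|v\|_\infty(\mu-\tfrac{1}{2\omega})\sigma_m h\eps \le 2\eps^2\widehat{C}_{inv}\|v^2\|_\infty$, which upon dividing by $2\eps\sigma_m h$ reads $\lambda \ge 2(\mu-\tfrac{1}{2\omega})\tfrac{C_{inv}\|v\|_\infty}{\widehat{C}_{inv}\|v^2\|_\infty}=\lambda_S(\mu)$ (note $C_{inv}\|v\|_\infty$ is in the numerator, not $\widehat{C}_{inv}\|v^2\|_\infty$).
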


%%%%%%%%%%%%%%%%%%%%%%%%%%%%%%%%%%%
% lemma: monotonicity of tau1 and tau2
%%%%%%%%%%%%%%%%%%%%%%%%%%%%%%%%%%%
\begin{lem}\label{lem:monotone}
When restricted to $\{\mu: \frac{\eps}{\sigma_m h}>\lamtwo(\mu)\}$,  $\tau_{\eps,h,2}(\mu)$ is positive and  monotonically decreasing. 
When restricted to $\{\mu\in (\frac{1}{2\omega}, \min(\mu_S(\frac{\eps}{\sigma_m h}), 1)]: \frac{\eps}{\sigma_m h}>\lamone(\mu) \}$,  $\tau_{\eps,h,1}(\mu)$ is positive and monotonically increasing. 
\end{lem}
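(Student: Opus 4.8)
\textbf{Proof proposal for Lemma \ref{lem:monotone}.}

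The plan is to treat the two claims separately, since each is a one-variable calculus statement about an explicit rational function of $\mu$ with positive denominator on the relevant set. First I would record the two explicit formulas from \eqref{eqn:tau1}--\eqref{eqn:tau2}, namely
$$
\tau_{\eps,h,2}(\mu)=\frac{2\eps^2 h}{4C_{inv}\|v\|_\infty\eps-(1-\mu)\sigma_m h},\qquad
\tau_{\eps,h,1}(\mu)=\frac{2\eps^2(\mu-\tfrac1{2\omega})h^2\sigma_m}{2\eps^2\widehat{C}_{inv}\|v^2\|_\infty-(1-\mu)(\mu-\tfrac1{2\omega})\sigma_m^2 h^2},
$$
and note that $\omega>1/2$ is assumed in this part of the paper, so $\tfrac1{2\omega}<1$ and the interval $(\tfrac1{2\omega},1]$ is nonempty.

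For $\tau_{\eps,h,2}$: on the set $\{\mu:\ \eps/(\sigma_m h)>\lamtwo(\mu)\}$, recalling $\lamtwo(\mu)=\tfrac{1-\mu}{4C_{inv}\|v\|_\infty}$, the condition $\eps/(\sigma_m h)>\lamtwo(\mu)$ rearranges exactly to $4C_{inv}\|v\|_\infty\eps-(1-\mu)\sigma_m h>0$, i.e.\ the denominator is strictly positive; hence $\tau_{\eps,h,2}(\mu)>0$ there. Since the numerator $2\eps^2 h$ is a positive constant in $\mu$ and the denominator is an \emph{increasing} affine function of $\mu$ (coefficient $+\sigma_m h>0$), the quotient is strictly decreasing. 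This is immediate and needs no differentiation.

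For $\tau_{\eps,h,1}$: on $\{\mu\in(\tfrac1{2\omega},\min(\mu_S(\tfrac{\eps}{\sigma_m h}),1)]:\ \eps/(\sigma_m h)>\lamone(\mu)\}$, first observe positivity: the numerator $2\eps^2(\mu-\tfrac1{2\omega})h^2\sigma_m>0$ since $\mu>\tfrac1{2\omega}$, and the condition $\eps/(\sigma_m h)>\lamone(\mu)=\sqrt{(1-\mu)(\mu-\tfrac1{2\omega})/(2\widehat{C}_{inv}\|v^2\|_\infty)}$ squares to exactly $2\eps^2\widehat{C}_{inv}\|v^2\|_\infty-(1-\mu)(\mu-\tfrac1{2\omega})\sigma_m^2 h^2>0$, so the denominator is positive and $\tau_{\eps,h,1}(\mu)>0$. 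For monotonicity, write $\tau_{\eps,h,1}(\mu)=C/D(\mu)$ where $C>0$ is constant and, after dividing numerator and denominator by $(\mu-\tfrac1{2\omega})>0$,
$$
\tau_{\eps,h,1}(\mu)=\frac{2\eps^2 h^2\sigma_m}{\dfrac{2\eps^2\widehat{C}_{inv}\|v^2\|_\infty}{\mu-\tfrac1{2\omega}}-(1-\mu)\sigma_m^2 h^2}.
$$
Both terms in this denominator are strictly decreasing in $\mu$ on $(\tfrac1{2\omega},1]$: the first because $\mu-\tfrac1{2\omega}$ increases so its reciprocal decreases, the second because $-(1-\mu)$ increases so its negative $-(1-\mu)\sigma_m^2h^2$... wait—I would instead simply note $-(1-\mu)\sigma_m^2h^2$ is increasing in $\mu$, so I must be careful: rewrite the denominator as $g(\mu)-\sigma_m^2h^2+\mu\sigma_m^2 h^2$ with $g(\mu)=2\eps^2\widehat{C}_{inv}\|v^2\|_\infty/(\mu-\tfrac1{2\omega})$ decreasing and $\mu\sigma_m^2h^2$ increasing, so the sign of the derivative of the denominator is not obviously fixed on all of $(\tfrac1{2\omega},1]$; this is precisely why the upper cutoff $\mu\le\min(\mu_S(\tfrac{\eps}{\sigma_m h}),1)$ is imposed. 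I would therefore compute $D'(\mu)$ for $D(\mu)=2\eps^2\widehat{C}_{inv}\|v^2\|_\infty-(1-\mu)(\mu-\tfrac1{2\omega})\sigma_m^2 h^2$, getting $D'(\mu)=-\sigma_m^2 h^2\big((1-\mu)-(\mu-\tfrac1{2\omega})\big)=-\sigma_m^2h^2\big(1+\tfrac1{2\omega}-2\mu\big)$, which is $\le 0$ exactly when $\mu\le\tfrac12(1+\tfrac1{2\omega})$. Since $\mu_\star<\tfrac12(1+\tfrac1{2\omega})$ by $\mK>1$ (as shown in the proof of Lemma \ref{lem:mu_star}(i)) and $\mu_S(\tfrac{\eps}{\sigma_m h})\le 1$ forces $\tfrac{\eps}{\sigma_m h}\le\widehat\lambda_\star$... hmm, I actually need $\mu\le\tfrac12(1+\tfrac1{2\omega})$ on the whole truncated set. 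I would argue: $\tau_{\eps,h,1}(\mu)=C/D(\mu)$ with $C>0$, $D>0$; then $\tau_{\eps,h,1}$ increasing $\iff$ $D$ decreasing $\iff$ $\mu\le\tfrac12(1+\tfrac1{2\omega})$, and I claim the truncation $\mu\le\min(\mu_S(\tfrac{\eps}{\sigma_m h}),1)$ combined with the constraint $\eps/(\sigma_m h)>\lamone(\mu)$ keeps us in that range. The cleanest route: on the region under consideration one has $\lambda:=\eps/(\sigma_m h)>\lamone(\mu)$ and $\mu\le\mu_S(\lambda)$; by Lemma \ref{lem:mu_star}(ii) and Lemma \ref{lem:muS:mono}, $\mu\le\mu_S(\lambda)\iff\lambda_S(\mu)\le\lambda$, and combined with $\lamone(\mu)<\lambda$ this does not by itself bound $\mu$, so I would instead verify directly that the constraint set is contained in $\{\mu\le\tfrac12(1+\tfrac1{2\omega})\}$, or failing that, argue monotonicity of $\tau_{\eps,h,1}$ by the divided form above after splitting at $\mu_\star$.

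\textbf{Main obstacle.} The genuine difficulty is the second claim: establishing that $\tau_{\eps,h,1}(\mu)$ is monotonically \emph{increasing} on the truncated set, because the denominator $D(\mu)=2\eps^2\widehat{C}_{inv}\|v^2\|_\infty-(1-\mu)(\mu-\tfrac1{2\omega})\sigma_m^2 h^2$ is a (downward-opening, then upward as $\mu$ grows) quadratic whose derivative $D'(\mu)=-\sigma_m^2h^2(1+\tfrac1{2\omega}-2\mu)$ changes sign at $\mu=\tfrac12(1+\tfrac1{2\omega})$. The whole point of carrying the cutoff $\mu\le\min(\mu_S(\tfrac{\eps}{\sigma_m h}),1)$ rather than $\mu\le 1$ is to stay on the branch where $D$ is decreasing, i.e.\ where $\tau_{\eps,h,1}$ is increasing; pinning down that the truncated constraint set lies inside $\{\mu\le\tfrac12(1+\tfrac1{2\omega})\}$ is the crux, and it is here that $\mK>1$ (equivalently $\mu_\star<\tfrac12(1+\tfrac1{2\omega})$, already proved in Lemma \ref{lem:mu_star}(i)) together with the geometry recorded in Remark \ref{rem:geometry} is used. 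Once that inclusion is in hand, $D'\le 0$ on the set, hence $D$ is nonincreasing, hence $\tau_{\eps,h,1}=C/D$ with $C>0$, $D>0$ is nondecreasing, and strict monotonicity follows since $D'<0$ on the interior. Positivity of both $\tau$'s, by contrast, is an immediate reinterpretation of the defining inequalities $\eps/(\sigma_m h)>\lamone(\mu)$ and $\eps/(\sigma_m h)>\lamtwo(\mu)$ as positivity of the respective denominators, as noted above.
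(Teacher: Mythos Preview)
Your treatment of $\tau_{\eps,h,2}$ is correct and coincides with the paper's: positivity of the denominator is exactly the condition $\eps/(\sigma_m h)>\lamtwo(\mu)$, and the denominator is affine increasing in $\mu$.

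For $\tau_{\eps,h,1}$, however, there is a genuine gap. You conflate two different denominators. After dividing numerator and denominator by $(\mu-\tfrac1{2\omega})$ you correctly write $\tau_{\eps,h,1}=C/\tilde D$ with $C=2\eps^2h^2\sigma_m$ constant and
\[
\tilde D(\mu)=\frac{2\eps^2\widehat C_{inv}\|v^2\|_\infty}{\mu-\tfrac1{2\omega}}-(1-\mu)\sigma_m^2h^2,
\]
but you then differentiate the \emph{undivided} denominator $D(\mu)=2\eps^2\widehat C_{inv}\|v^2\|_\infty-(1-\mu)(\mu-\tfrac1{2\omega})\sigma_m^2h^2$ and obtain the criterion $D'\le 0\iff\mu\le\tfrac12(1+\tfrac1{2\omega})$. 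This is the wrong object: $D'\le 0$ is \emph{not} equivalent to $\tilde D$ decreasing, nor to $\tau_{\eps,h,1}$ increasing (since the original numerator also depends on $\mu$). Worse, the inclusion you set out to prove---that the constraint set lies in $\{\mu\le\tfrac12(1+\tfrac1{2\omega})\}$---is false: for $\lambda=\eps/(\sigma_m h)>\widehat\lambda_\star$ one has $\mu_S(\lambda)>1$, so the constraint set is all of $(\tfrac1{2\omega},1]$ (as $\lamone(\mu)<\widehat\lambda_\star$ always, by Lemma~\ref{lem:mu_star}(iii)), while $\tfrac12(1+\tfrac1{2\omega})<1$.

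The paper computes $\tau_{\eps,h,1}'(\mu)$ directly by the quotient rule and finds its sign equals that of
\[
q(\mu)=2\eps^2\widehat C_{inv}\|v^2\|_\infty-(\mu-\tfrac1{2\omega})^2\sigma_m^2h^2,
\]
with roots $\tilde\mu_{1,2}=\tfrac1{2\omega}\mp\tfrac{\eps}{\sigma_m h}\sqrt{2\widehat C_{inv}\|v^2\|_\infty}$. The key step, using $\mK>1$, is the comparison $\tilde\mu_2(\lambda)>\mu_S(\lambda)$ for all $\lambda>0$; since also $\tilde\mu_1<\tfrac1{2\omega}$, this gives $(\tfrac1{2\omega},\min(\mu_S(\lambda),1)]\subset(\tilde\mu_1,\tilde\mu_2)$ and hence $q>0$ there. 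Equivalently, had you differentiated $\tilde D$ rather than $D$, you would find $\tilde D'(\mu)<0\iff\mu<\tilde\mu_2$, and this same comparison $\tilde\mu_2>\mu_S(\lambda)$ closes the argument.
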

%%%%
%proof
%%%
\begin{proof}
The definitions of $\lambda_{j}(\mu)$ ensures  $\tau_{\eps,h,j}(\mu)$  is positive with $j=1, 2$ for the considered $\mu$. The monotonicity of $\tau_{\eps,h,2}(\mu)$ directly comes from its being linear, and what remained will be devoted to showing the monotonicity of  $\tau_{\eps,h,1}(\mu)$. 

Based on the definition of $\tau_{\eps,h,1}(\mu)$ in \eqref{eqn:tau1},  we know that when $\frac{\eps}{\sigma_m h}>\lamone(\mu)$, we have $2\eps^2 \widehat{C}_{inv}||v^2||_\infty-(1-\mu)(\mu-\frac{1}{2\omega} )\sigma_m^2h^2>0$, and 
\begin{align*}
\tau'_{\eps,h,1}(\mu)=\frac{2\eps^2h^2\sigma_m\left(2\eps^2 \widehat{C}_{inv}|| v^2||_\infty-(\mu-\frac{1}{2\omega})^2\sigma_m^2h^2\right)
}{(2\eps^2 \widehat{C}_{inv}||v^2||_\infty-(1-\mu)(\mu-\frac{1}{2\omega} )\sigma_m^2h^2)^2}.
\end{align*}
As a result, the sign of $\tau'_{\eps,h,1}(\mu)$,  same as that of
$
q(\mu):=
2\eps^2 \widehat{C}_{inv}|| v^2||_\infty-(\mu-\frac{1}{2\omega})^2\sigma_m^2h^2,$
will inform  about  the monotonicity of $\tau_{\eps,h,1}(\mu)$.

Consider the two roots of $q(\mu)$, which are  $\tilde{\mu}_{1,2}=\tilde{\mu}_{1,2}(\frac{\eps}{\sigma_m h})=\frac{1}{2\omega}\mp \dfrac{\eps}{\sigma_m h}\sqrt{2\widehat{C}_{inv}||v^2||_\infty }.$ And $q(\mu)>0$ when  $\mu \in(\tilde{\mu}_1,\tilde{\mu}_2)$.  Note that $\tilde{\mu}_1<\frac{1}{2\omega}$. One can further show that $\tilde{\mu}_2(\lambda)> \mu_S(\lambda), \; \forall \lambda>0$ as below.
\begin{align*}
\mu_S(\lambda)<\tilde{\mu}_2(\lambda)&\Longleftrightarrow\frac{1}{2\omega}+\frac{1}{2}\lambda\frac{\widehat{C}_{inv}||v^2||_\infty }{C_{inv}||v||_\infty}< \frac{1}{2\omega}+\lambda\sqrt{2\aaa}
\notag\\
&\Longleftrightarrow \frac{\widehat{C}_{inv}||v^2||_\infty }{2C_{inv}||v||_\infty}<\sqrt{2\widehat{C}_{inv}||v^2||_\infty }\Longleftrightarrow \mK> 1.
\end{align*}
Hence $\left(\frac{1}{2\omega}, \min(\mu_S(\frac{\eps}{\sigma_m h}), 1)\right]\subset (\tilde{\mu}_1,\tilde{\mu}_2)$. And the monotonicity of $\tau_{\eps,h,1}(\mu)$ will follow.

\end{proof}

\begin{lem}\label{lem:lambda_star}
Assume $\lambda>0$. 
\begin{itemize}
\item[(i)] $\lambda>\lambda_\star \Longleftrightarrow \lambda >\lamtwo(\mu_S(\lambda))$.%= \frac{1-\mu_S(\lambda)}{4\bbb}$;
\item[(ii)] When $\lambda\leq \widehat{\lambda}_\star$,  then $\lambda>\lambda_\star \Longleftrightarrow  \lambda>\lamone(\mu_S(\lambda))$.
\item[(iii)] When $\lambda_\star<\frac{\eps}{\sigma_m h}\leq  \widehat{\lambda}_\star$,   we have $\frac{\eps}{\sigma_m h}> \max (\lamone(\mu), \lamtwo(\mu))|_{\mu=\mu_S(\frac{\eps}{\sigma_m h})}$.
\end{itemize}
\end{lem}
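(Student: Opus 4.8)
\emph{Proof plan.} The plan is to reduce all three parts to one elementary algebraic inequality. Write $\lambda>0$ for the generic argument (it is $\eps/(\sigma_m h)$ in part (iii)). I will use from Table~\ref{tab:notation} and Lemma~\ref{lem:muS:mono} the explicit forms $\mu_S(\lambda)-\tfrac{1}{2\omega}=\tfrac12\lambda\tfrac{\aaa}{\bbb}$ and $\lamtwo(\mu)=\tfrac{1-\mu}{4\bbb}$, together with the facts that $\mu_S$ is increasing with $\mu_S(\lambda_\star)=\mu_\star$ and $\mu_S(\widehat{\lambda}_\star)=1$, and that $\mu_S(\lambda)>\tfrac{1}{2\omega}$ for every $\lambda>0$.

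For part (i), I would substitute $\mu=\mu_S(\lambda)$ into $\lamtwo$ to obtain
\[
\lamtwo\bigl(\mu_S(\lambda)\bigr)=\frac{1-\frac{1}{2\omega}}{4\bbb}-\frac{\aaa}{8\bbb^2}\,\lambda ,
\]
so that, collecting the $\lambda$-terms, $\lambda>\lamtwo(\mu_S(\lambda))$ is equivalent to $\lambda\bigl(1+\tfrac{\aaa}{8\bbb^2}\bigr)>\tfrac{1-\frac{1}{2\omega}}{4\bbb}$, i.e.\ to $\lambda>\tfrac{2\bbb(1-\frac{1}{2\omega})}{\aaa+8\bbb^2}=\lambda_\star$. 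Since each step is an equivalence, part (i) follows, and the boundary case $\lambda=\lambda_\star$ is consistent (both sides false, as $\mu_S(\lambda_\star)=\mu_\star$ and $\lamtwo(\mu_\star)=\lambda_\star$ by Lemma~\ref{lem:mu_star}(i)). A monotonicity-only proof is also available ($\mu_S$ increasing, $\lamtwo$ decreasing, $\mu_S(\lambda_\star)=\mu_\star$, $\lamtwo(\mu_\star)=\lambda_\star$), but I prefer the direct computation since it avoids a case split on the sign of $1-\mu_S(\lambda)$.

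For part (ii), the key step is the identity
\[
\lamone\bigl(\mu_S(\lambda)\bigr)^2=\frac{\bigl(1-\mu_S(\lambda)\bigr)\bigl(\mu_S(\lambda)-\frac{1}{2\omega}\bigr)}{2\aaa}=\lambda\cdot\frac{1-\mu_S(\lambda)}{4\bbb}=\lambda\,\lamtwo\bigl(\mu_S(\lambda)\bigr),
\]
which is valid precisely when $\mu_S(\lambda)\le1$, i.e.\ precisely under the hypothesis $\lambda\le\widehat{\lambda}_\star$ (Lemma~\ref{lem:muS:mono}(ii)); the same condition keeps the radicand nonnegative, so $\lamone(\mu_S(\lambda))$ is a well-defined nonnegative real. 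Then $\lamone(\mu_S(\lambda))$ is the geometric mean of $\lambda$ and $\lamtwo(\mu_S(\lambda))$, and since $\lambda>0$ I would conclude $\lambda>\lamone(\mu_S(\lambda))\Longleftrightarrow\lambda^2>\lambda\,\lamtwo(\mu_S(\lambda))\Longleftrightarrow\lambda>\lamtwo(\mu_S(\lambda))$, which by part (i) is $\lambda>\lambda_\star$. Alternatively one may invoke Lemma~\ref{lem:mu_star}(ii) at $\mu=\mu_S(\lambda)$, which lies in $(\tfrac{1}{2\omega},1]$ under the hypothesis, using $\lambda_S(\mu_S(\lambda))=\lambda$ and $\mu_S(\lambda)\le\mu_\star\Longleftrightarrow\lambda\le\lambda_\star$.

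Finally, part (iii) follows at once: taking $\lambda=\eps/(\sigma_m h)\in(\lambda_\star,\widehat{\lambda}_\star]$ and $\mu=\mu_S(\lambda)$, part (i) gives $\lambda>\lamtwo(\mu)$ and part (ii), applicable since $\lambda\le\widehat{\lambda}_\star$, gives $\lambda>\lamone(\mu)$, hence $\lambda>\max(\lamone(\mu),\lamtwo(\mu))$. I do not expect a real obstacle anywhere; the only delicate point is bookkeeping --- making sure $\mu_S(\lambda)$ lies in the admissible range $(\tfrac{1}{2\omega},1]$ before quoting the earlier lemmas (which is exactly why part (ii) carries the restriction $\lambda\le\widehat{\lambda}_\star$), and checking that the boundary $\lambda=\lambda_\star$ gives the correct strict form of each equivalence.
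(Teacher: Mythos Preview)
Your proposal is correct and follows essentially the same route as the paper's proof: for (i) you substitute $\mu_S(\lambda)$ into $\lamtwo$ and reduce to a linear inequality in $\lambda$ equivalent to $\lambda>\lambda_\star$, exactly as the paper does; for (iii) both you and the paper simply combine (i) and (ii). For (ii) your argument is the same computation as the paper's, just presented more cleanly: the paper squares the inequality $\lambda>\lamone(\mu_S(\lambda))$, divides by $\lambda$, and observes that the result is the inequality from (i); your geometric-mean identity $\lamone(\mu_S(\lambda))^2=\lambda\cdot\lamtwo(\mu_S(\lambda))$ is precisely what makes that step work, so the two presentations are equivalent.
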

\begin{proof}
To prove (i), we proceed from the definitions of  $\lamtwo(\mu)$ and $\mu_S(\lambda)$, and get
\begin{align}
\lambda >\lamtwo(\mu_S(\lambda)) \Longleftrightarrow & \lambda >\frac{1-\frac{1}{2\omega}-\frac{1}{2}\lambda \frac{\aaa}{\bbb} }{4\bbb} \\
\Longleftrightarrow &
\left(1+ \frac{\aaa}{8(\bbb)^2} \right)\lambda > \frac{ 1-\frac{1}{2\omega} }{4\bbb}
\Longleftrightarrow\lambda>  \lambda_\star.
\end{align}
To prove (ii),  we first notice  $\mu_S(\lambda)>\frac{1}{2\omega}$ holds  when $\lambda>0$. With $\lambda\leq \widehat{\lambda}_\star$, equivalently $\mu_S(\lambda)\leq 1$,  we then have
\begin{align}
\lambda>\lamone(\mu_S(\lambda)) \Longleftrightarrow & \lambda > \sqrt{\frac{(1-\frac{1}{2\omega}-\frac{1}{2}\lambda \frac{\aaa}{\bbb})\frac{1}{2}\lambda \frac{\aaa}{\bbb} }{2\aaa} } \notag\\
\Longleftrightarrow&
\lambda > \left(1-\frac{1}{2\omega} - \frac{1}{2} \lambda \frac{\aaa}{\bbb} \right) \frac{1}{4\bbb}
\Longleftrightarrow \lambda> \lambda_\star.
\end{align}
(iii) is a direct result of (i) and (ii) of this lemma.
\end{proof}

%%%%%%%%%%%%%%%%%%%%%%%%%%%%%%%%%%%%%%%%%%%%%%%%%%%%%%%

\subsection{Proof of Theorem \ref{thm:stab}: unconditionally stable region, $k\geq 1$}\label{sec:un_k1}

Based on Theorem \ref{thm:mu} and the definition of (unconditional) stability, the IMEX1-LDG method is unconditionally stable if and only if $\dtstabk(\eps, h)=\infty$, which is equivalent to 
\begin{align}
\label{eq:uncon-pk}
\frac{\eps}{\sigma_m h}\leq  \max_{\mu\in(\frac{1}{2\omega},1]}\big(\min\big(\lamone(\mu),\lamtwo(\mu) \big)\big).
%=\max_{\mu\in(\frac{1}{2\omega},1]}\left\{ \min\left(\frac{1-\mu}{4C_{inv}||v||_\infty }, \sqrt{\frac{(1-\mu)(\mu-\frac{1}{2\omega} ) } {2\widehat{C}_{inv} ||v^2||_\infty} } \right)\right\}.
\end{align}
Using Lemma \ref{lem:muS:mono}-(i) and  Lemma \ref{lem:mu_star}-(i), one has
\beq
 \min\big(\lamone(\mu),\lamtwo(\mu)\big)
=\left\{
\begin{array}{ll}
\lamone(\mu), & \textrm{if}\; \mu\leq \mu_\star, \\
\lamtwo(\mu), &\textrm{if}\; \mu\geq \mu_\star
\end{array}
\right.
\eeq
where $\mu_\star\in (\frac{1}{2\omega},  1)$, and  the inequality \eqref{eq:uncon-pk} will be simplified as
  \begin{align}
\frac{\eps}{\sigma_m h}\leq 
\max\left(
 \max_{\mu\in(\frac{1}{2\omega},\mu_\star]} \lamone(\mu), 
\max_{\mu\in[\mu_\star,1]}\lamtwo(\mu)
\right)
=\max\left(\lamone(\mu_\star), \lamtwo(\mu_\star)\right)=\lambda_\star.
\label{eq:uncon-pk:2}
\end{align}
This gives the result in Theorem \ref{thm:stab} regarding the unconditional stability  when  $k\geq 1$.

%%%%%%%%%%%%%%%%%%%%%%%%%%%%%%%%%%%%%%%%%%%%%%%%%%%%%%%%%%%%%%%
\subsection{Proof of Theorem \ref{thm:stab}:  conditionally stable region, $1\leq k\leq 9$, $ \frac{\eps}{\sigma_m h}>\lambda_\star$}
 %%%%%%%%%%%%%%%%%%%%% %%%%%%%%%%%%%%%%%%%%%

In this subsection, we focus on $\eps$ and $h$ that satisfy $ \frac{\eps}{\sigma_m h}>\lambda_\star$. For such $\eps, h$, we have $\Dt_{\textrm{stab}}(\eps, h)<\infty$, and the IMEX1-LDG method is conditionally stable. Based on the $\mu$-stability result in Theorem \ref{thm:mu}, we want to optimize the time step condition by properly choosing $\mu$ from the admissible set, hence to get $\dtstabk(\eps, h)$ and establish the remaining result in Theorem \ref{thm:stab}.  %Throughout this subsection  section,  $ \frac{\eps}{\sigma_m h}>\lambda_\star$ is assumed.

\subsubsection{When $ \frac{\eps}{\sigma_m h}>\widehat{\lambda}_\star$}
We start with the simplest case, that is when  $ \frac{\eps}{\sigma_m h}>\widehat{\lambda}_\star$. According to Lemma \ref{lem:mu_star}-(iii),  for such $\eps, h$, one has $\frac{\eps}{\sigma_m h}>\max(\lamone(\mu),\lamtwo(\mu)), \forall \mu\in(\frac{1}{2\omega}, 1]$, hence $\tau_{\eps, h, j}(\mu)<\infty$, $j=1, 2$, and $$\Dt_{\textrm{stab}}(\eps, h)=\max_{ \mu\in(\frac{1}{2\omega}, 1]}\min (\tau_{\eps, h, 1}(\mu), \tau_{\eps, h, 2}(\mu)).$$ 

Using the property of $\mu_S(\lambda)$ in Lemma \ref{lem:muS:mono},  we get
\beq
\frac{\eps}{\sigma_m h}>\widehat{\lambda}_\star\Leftrightarrow \mu_S(\frac{\eps}{\sigma_m h})>\mu_S(\widehat{\lambda}_\star)=1\Rightarrow \mu_S(\frac{\eps}{\sigma_m h}) \geq \mu, \quad \forall \mu\in (\frac{1}{2\omega}, 1].
\label{eq:Dt0}
\eeq
Now following the comparison property in Lemma \ref{lem:tau12} and the monotonicity of $\tau_{\eps, h, 1}(\mu)$ in Lemma \ref{lem:monotone}, we have, when $\frac{\eps}{\sigma_m h}>\widehat{\lambda}_\star$,
$$\Dt_{\textrm{stab}}(\eps, h)=\max_{ \mu\in(\frac{1}{2\omega}, 1]\cap(\frac{1}{2\omega},\mu_S(\frac{\eps}{\sigma_m h})] }\tau_{\eps, h, 1}(\mu)=
\tau_{\eps, h, 1}\left( \min(\mu_S(\frac{\eps}{\sigma_m h}),   1) \right).$$

 \subsubsection{When $ {\lambda}_\star<\frac{\eps}{\sigma_m h}\leq \widehat{\lambda}_\star$} %
 %%%%%%%%%%%%%%%%%%%%% %%%%%%%%%%%%%%%%%%%%%
From here on, we assume $ \frac{\eps}{\sigma_m h}\in (\lambda_\star, \widehat{\lambda}_\star]$. 
The relation in \eqref{eq:Dt0} implies  
\beq
\mu_S(\frac{\eps}{\sigma_m h})\leq 1.
\label{eq:Dt4}
\eeq  We decompose  $(\frac{1}{2\omega}, 1]$ into three disjoint sets $S_j(\eps, h)$, $j=1, 2, 3$, defined as
\begin{align*}
%\label{eqn:A1}
S_1(\eps,h) &= \left\{\mu\in  (\frac{1}{2\omega}, 1]: \frac{\eps}{\sigma_m h}>\max(\lamone(\mu),\lamtwo(\mu))\right\},\\
S_2(\eps,h) &= \left\{\mu\in  (\frac{1}{2\omega}, 1]: \lamone(\mu)<\frac{\eps}{\sigma_m h}\leq \lamtwo(\mu)\right\},\\
S_3(\eps,h) &= \left\{\mu\in  (\frac{1}{2\omega}, 1]: \lamtwo(\mu)<\frac{\eps}{\sigma_m h}\leq \lamone(\mu))\right\}.
\end{align*}
One can refer to Figure \ref{fig:proof} to visualize the decomposition for a constant weight function $\omega\equiv 1$.
And correspondingly, 
$$\Dt_{\textrm{stab}}(\eps, h)=\max_{\mu\in (\frac{1}{2\omega}, 1]}\min(\tauone(\mu), \tautwo(\mu))=\max_{j=1, 2, 3} \Dt_{\textrm{stab}}^{(j)}(\eps, h),$$
where  
$\Dt_{\textrm{stab}}^{(j)}(\eps, h):=\max_{\mu\in S_j(\eps, h)}\min(\tauone(\mu), \tautwo(\mu)).$ 
Next we will calculate  $\Dt_{\textrm{stab}}^{(1)}(\eps, h)$, and then show
$\Dt_{\textrm{stab}}^{(1)}(\eps, h)\geq \Dt_{\textrm{stab}}^{(j)}(\eps, h), j=2, 3$, therefore
\beq \Dt_{\textrm{stab}}(\eps, h)= \Dt_{\textrm{stab}}^{(1)}(\eps, h).\eeq

\bigskip
 {\bf Step 1:  To compute $\Dt_{\textrm{stab}}^{(1)}(\eps, h)$.}  When $\mu\in S_1(\eps, h)$, we have $\tauone(\mu)=\tau_{\eps,h,1}(\mu)<\infty$, $\tautwo(\mu)=\tau_{\eps,h,2}(\mu)<\infty$.
 Based on the comparison result in  Lemma \ref{lem:tau12},  and the property of $\mu_S(\lambda)$ in Lemma \ref{lem:muS:mono}, there holds
 
\begin{align}
\label{eq: mtau_12}
\min\left(\tauone(\mu),\tautwo(\mu) \right)
&=\begin{cases}
\tau_{\eps,h,1}(\mu), \quad & \mu \in (\frac{1}{2\omega},\mu_S(\frac{\eps}{\sigma_m h}) ],\\
\tau_{\eps,h,2}(\mu), \quad & \mu\in  (\mu_S(\frac{\eps}{\sigma_m h}),1].
\end{cases}
\end{align}
 
With  $ \lambda_\star<\frac{\eps}{\sigma_m h}\leq \widehat{\lambda}_\star$, based on 
Lemma \ref{lem:lambda_star}-(iii), we will get $\mu_S(\frac{\eps}{\sigma_m h})\in S_1(\eps, h)$.    
By further using the monotonicity of $\tau_{\eps, h, j}(\mu), j=1, 2$ in Lemma \ref{lem:monotone}, and the fact $\tau_{\eps,h,1}(\mu_S(\frac{\eps}{\sigma_m h}) ) = \tau_{\eps,h,2}(\mu_S(\frac{\eps}{\sigma_m h}) )$ in Lemma \ref{lem:tau12},  when  $ \frac{\eps}{\sigma_m h}\in (\lambda_\star, \widehat{\lambda}_\star]$,
\begin{align}
\label{eq:Dt1}
\Dt_{\textrm{stab}}^{(1)}(\eps, h) &= \max_{\mu \in S_1(\eps,h)}\Big( \min\left(\tauone(\mu),\tautwo(\mu)\right)\Big)\notag\\
& =
\tau_{\eps,h,1}( \mu_S(\frac{\eps}{\sigma_m h}) ) 
= \tau_{\eps,h,1} \left( \min(\mu_S(\frac{\eps}{\sigma_m h}), 1) \right).
\end{align}

%%%%%%%%%%%%%%%%%%%%%%%%%%%%%%%%%%%%%%%%%%%%%%%%%%%%%%%%%%%%%%%%%%%
\bigskip
 {\bf Step 2:  To show  $\Dt_{\textrm{stab}}^{(2)}(\eps, h)\leq \Dt_{\textrm{stab}}^{(1)}(\eps, h)$.}  When $\mu\in S_2(\eps, h)$, we have $\tauone(\mu)=\tau_{\eps,h,1}(\mu)<\infty$, $\tautwo(\mu)=\infty$, hence $\min(\tauone(\mu),\tautwo(\mu) ) = \tau_{\eps,h,1}(\mu)$.

For any $\mu\in S_2(\eps, h)$, based on Lemma \ref{lem:mu_star}, we have $\mu \leq \mu_\star$. Moreover, using the fact of 
$\mu_S(\lambda_\star) = \mu_\star$ and the monotonicity of $\mu_S(\lambda)$ in Lemma \ref{lem:muS:mono}, as well as the assumption $\frac{\eps}{\sigma_m h}>\lambda_\star$, we have for $\mu\in S_2(\eps, h)$,
$$
\mu \leq \mu_\star=\mu_S(\lambda_\star)< \mu_S(\frac{\eps}{\sigma_m h}).
$$
Finally, we can once again use  the monotonicity of  $\tau_{\eps,h,1}(\mu)$ in Lemma \ref{lem:monotone}, and conclude 
\begin{align}
\label{eq:Dt2}
\Dt_{\textrm{stab}}^{(2)}(\eps, h) &= \max_{\mu \in S_2(\eps,h)}\Big( \min\left(\tauone(\mu),\tautwo(\mu)\right)\Big)= \max_{\mu \in S_2(\eps,h)}\tau_{\eps, h, 1}(\mu)\notag\\
&\leq\tau_{\eps,h,1}\left(\mu_S(\frac{\eps}{\sigma_m h})\right) = \Dt_{\textrm{stab} }^{(1)}(\eps, h).
\end{align}

%%%%%%%%%%%%%%%%%%%%%%%%%%%%%%%%%%%%%%%%%%%%%%%%%%%%%%%%%%%%%%%%%%%
\bigskip
 {\bf Step 3:  To show  $\Dt_{\textrm{stab}}^{(3)}(\eps, h)\leq \Dt_{\textrm{stab}}^{(1)}(\eps, h)$.}   When $\mu\in S_3(\eps, h)$, we have $\tauone(\mu)=\infty$, $\tautwo(\mu)=\tau_{\eps,h,2}(\mu)<\infty$, hence $\min(\tauone(\mu),\tautwo(\mu) ) = \tau_{\eps,h,2}(\mu)$.

Given any  $\mu \in S_3(\eps, h)$, we know $\lamtwo(\mu)<\frac{\eps}{\sigma_m h}\leq \lamone(\mu)$. This, combined with Lemma \ref{lem:mu_star}, implies $\mu>\mu_\star$, and additionally
\begin{align}
\frac{\eps}{\sigma_m h} \leq \lamone(\mu)< \lambda_S(\mu) \Leftrightarrow \mu> \mu_S\left(\frac{\eps}{\sigma_m h}\right). 
\end{align}
The equivalency is based on the monotonicity of $\mu_S(\lambda)$ in Lemma \ref{lem:muS:mono}. Finally one can use the 
 monotonicity of $\tau_{\eps,h,2}(\mu)$ in Lemma \ref{lem:monotone}, and conclude
\begin{align}
\Dt_{\textrm{stab}}^{(3)}(\eps, h) &=
\max_{\mu \in S_3(\eps,h)}\Big( \min\left(\tauone(\mu),\tautwo(\mu)\right)\Big)= \max_{\mu \in S_3(\eps,h)}\tau_{\eps, h, 2}(\mu)\notag\\
& \leq \tau_{\eps,h,2}(\mu_S(\frac{\eps}{\sigma_m h}) )=\tau_{\eps,h,1}\left(\mu_S(\frac{\eps}{\sigma_m h}) \right)
=\Dt_\textrm{stab}^{(1)}(\eps, h).\notag
\end{align}

%%%%%%%%%%%%%%%%%%%%%%%%%%%%%%%%%%%%%%%%%%%%%%%%%%%%%%%%%%%%%%%%%%%

\section{Proof for AP property: Theorem \ref{thm:AP}}
\label{sec:ap:proofs}

We will first build some preparatory results in Lemma \ref{lem:AP}, before proving the main result on the AP property in Theorem \ref{thm:AP}. The three assumptions in Section \ref{sec:ap} still hold.  Let $\{\Psi_j\}_{j=1}^{N_k}$ be an orthonormal basis of $U_h^k$ with respect to the standard $L^2$ inner product of $L^2(\Omega_x)$. Recall the initialization is via the $L^2$ projection onto $U_h^k$, namely, $\rho^0_\AP=\pi_h \rho_\varepsilon^0$, $g^0_\AP=\pi_h g_\varepsilon^0$,  $u_\AP^0=\pi_h(\sigma_s^{-1}q^0_\eps)$. We also define $W_{\LIM}^0=\pi_h W_0$ for $W=\rho, g$, and $u_\LIM^0=\pi_h(\sigma_s^{-1}q_0)$.

%%%%%%%%%%%%%%%%
% Lem
%%%%%%%%%%%%%%%%
\begin{lem}\label{lem:AP} The following results hold. \\
(i) $q_\varepsilon^0\rightharpoonup q_0$  in $L^2(\Omega_x)$ as $\varepsilon\rightarrow 0$. \\
(ii) $\lim_{\varepsilon\rightarrow 0}\rho_\AP^0=\rho^0_{\LIM}$, 
$\lim_{\varepsilon\rightarrow 0}u_\AP^0=u_{\LIM}^0$ and
\begin{align}
&\lim_{\varepsilon\rightarrow 0}\lgl \zeta, g_\AP^0(x,\cdot)\rgl =\lgl  \zeta, g_\LIM^0(x, \cdot)\rgl,\quad \forall  \zeta\in L^2(\Omega_v),\;\;\forall x\in \Omega_x,\\
\label{eq:g_initial_limit2}
&\lim_{\varepsilon\rightarrow 0}\lgl  \zeta, (g_\AP^0, \psi) \rgl =\lgl  \zeta, (g_\LIM^0, \psi)\rgl,\quad \forall  \zeta\in L^2(\Omega_v), \;\;\forall \psi\in L^2(\Ox).
\end{align}
(iii) $\sup_{\varepsilon}||W_\AP^0||<\infty$, where $W=\rho, g, u$.\\
(iv) $\sup_{\{0<\varepsilon<\eps_0(h)\}}||W_\AP^1||=C_W(k,\Dt, h,\Ov)<\infty$, where $W=\rho,  u$.
\end{lem}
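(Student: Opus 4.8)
Parts (i)--(iii) are soft consequences of weak compactness together with two elementary properties of the $L^2(\Omega_x)$-orthogonal projection $\pi_h$ onto the finite-dimensional space $U_h^k$: it is a contraction, and (since weak and strong convergence coincide in finite dimensions) it sends weakly $L^2$-convergent sequences to strongly convergent ones. For (i), Assumption 2 makes $\{q_\varepsilon^0\}$ bounded in $L^2(\Omega_x)$, so every sequence $\varepsilon_m\to 0$ has a subsequence along which $q_{\varepsilon_m}^0\rightharpoonup\widetilde q$ for some $\widetilde q\in L^2(\Omega_x)$; for each $\Omega_x$-periodic $\phi\in C^\infty$, integration by parts (no boundary term, by periodicity) gives $(q_\varepsilon^0,\phi)=-(\rho_\varepsilon^0,\partial_x\phi)$, and passing to the limit using $\rho_\varepsilon^0\rightharpoonup\rho_0$ identifies $\widetilde q=\partial_x\rho_0=q_0$; as the subsequential limit is always $q_0$, the whole family converges weakly to $q_0$.

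For (iii), the contraction property of $\pi_h$ (acting in $x$) gives $||\rho_\AP^0||\le||\rho_\varepsilon^0||$, $|||g_\AP^0|||\le|||g_\varepsilon^0|||$ and $||u_\AP^0||\le||\sigma_s^{-1}q_\varepsilon^0||\le\sigma_m^{-1}||q_\varepsilon^0||$, and Assumption 2 finishes it. For (ii), $\rho_\varepsilon^0\rightharpoonup\rho_0$ yields $\rho_\AP^0=\pi_h\rho_\varepsilon^0\to\pi_h\rho_0=\rho_\LIM^0$ in $U_h^k$; by (i), $q_\varepsilon^0\rightharpoonup q_0$, hence $\sigma_s^{-1}q_\varepsilon^0\rightharpoonup\sigma_s^{-1}q_0$ (pair against $\sigma_s^{-1}\psi\in L^2(\Omega_x)$), so $u_\AP^0\to u_\LIM^0$ likewise. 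For the $g$-statements, a Fubini computation from the definition of $\pi_h$ gives $\lgl\zeta g_\AP^0\rgl=\pi_h\lgl\zeta g_\varepsilon^0\rgl$ and $(g_\AP^0(\cdot,v),\psi)=(g_\varepsilon^0(\cdot,v),\pi_h\psi)$; then \eqref{weak-cvg2} gives $\pi_h\lgl\zeta g_\varepsilon^0\rgl\to\pi_h\lgl\zeta g_0\rgl=\lgl\zeta g_\LIM^0\rgl$ strongly in $U_h^k$, which on a space of piecewise polynomials implies convergence at every $x$ (indeed uniform on each $I_i$), yielding the pointwise statement, while the inner-product form follows from $(\lgl\zeta g_\AP^0\rgl,\psi)=(\lgl\zeta g_\varepsilon^0\rgl,\pi_h\psi)\to(\lgl\zeta g_0\rgl,\pi_h\psi)=(\lgl\zeta g_\LIM^0\rgl,\psi)$.

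For (iv), I work only with \eqref{eq:FDG:1T:a}--\eqref{eq:FDG:1T:b} at $n=0$, which determine $\rho_\AP^1,q_\AP^1,u_\AP^1$ without invoking the stiff equation \eqref{eq:FDG:1T:d}; this is precisely why $g$ is excluded from the statement. Testing \eqref{eq:FDG:1T:b} with $\testR=\rho_\AP^1$ and using Lemma \ref{lem:1} together with \eqref{eq:FDG:1T:a}--\eqref{eq:FDG:1T:c}, the two $l_h(\cdot,\rho_\AP^1)$ terms become $l_h(u_\AP^1,\rho_\AP^1)=-||u_\AP^1||_s^2$ and $l_h(\lgl v(g_\AP^0+\omega vu_\AP^0)\rgl,\rho_\AP^1)=-(\sigma_s u_\AP^1,\lgl v(g_\AP^0+\omega vu_\AP^0)\rgl)$, which leaves
\[
\frac{1}{2\Dt}||\rho_\AP^1||^2+\omega\lgl v^2\rgl\,||u_\AP^1||_s^2+(\sigma_a\rho_\AP^1,\rho_\AP^1)\le\frac{1}{2\Dt}||\rho_\AP^0||^2+(\sigma_s u_\AP^1,\lgl v(g_\AP^0+\omega vu_\AP^0)\rgl).
\]
A Young inequality absorbs $||u_\AP^1||_s$ into $\omega\lgl v^2\rgl||u_\AP^1||_s^2$, and $||\lgl v(g_\AP^0+\omega vu_\AP^0)\rgl||_s\le||v||_\infty\,|||g_\AP^0|||_s+\omega\lgl v^2\rgl||u_\AP^0||_s$ (using that $u_\AP^0$ is independent of $v$), so $||\rho_\AP^1||$ and $||u_\AP^1||_s$, hence $||u_\AP^1||$ via $\sigma_s\ge\sigma_m$, are bounded by a quantity built from $||\rho_\AP^0||,|||g_\AP^0|||,||u_\AP^0||,\Dt,\omega,1/\omega$ and velocity-space constants. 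For $\varepsilon<\varepsilon_0(h)$, Assumption 3 keeps $\omega$ and $1/\omega$ bounded and part (iii) bounds the initial data uniformly in $\varepsilon$, giving the claimed $C_W<\infty$, which depends at most on $k,\Dt,h$ and $\Omega_v$.

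\textbf{Main obstacle.} The only delicate point is (iv): the $l_h(\cdot,\rho_\AP^1)$ terms must be routed through Lemma \ref{lem:1} and the auxiliary equations so that everything reduces to genuine $L^2$-type quantities, thereby avoiding both inverse inequalities (which would bring in negative powers of $h$) and, more importantly, any $\varepsilon^{-1}$ factor, which would appear the moment one touched the microscopic equation \eqref{eq:FDG:1T:d}. Keeping $1/\omega$ under control via Assumption 3 is the remaining ingredient needed to close the estimate.
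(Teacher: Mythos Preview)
Your proof is correct. Parts (i)--(iii) are essentially identical to the paper's argument (the paper writes out the orthonormal-basis expansion where you invoke the abstract fact that $\pi_h$ takes weak $L^2$ limits to strong $U_h^k$ limits, but this is the same content). For (iv) you follow the paper's testing strategy---take $\testR=\rho_\AP^1$ and convert $l_h(u_\AP^1,\rho_\AP^1)$ to $-\|u_\AP^1\|_s^2$---but you handle the explicit term more cleanly: you observe that $\langle v(g_\AP^0+\omega v u_\AP^0)\rangle\in U_h^k$ and therefore, by the same Lemma~\ref{lem:1}/auxiliary-equation trick, $l_h(\langle v(g_\AP^0+\omega v u_\AP^0)\rangle,\rho_\AP^1)=-(\sigma_s u_\AP^1,\langle v(g_\AP^0+\omega v u_\AP^0)\rangle)$, which is then absorbed by Young's inequality into $\omega\langle v^2\rangle\|u_\AP^1\|_s^2$. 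The paper instead bounds this $l_h$ term directly by $C(k,\Dt,h,\Omega_v)\,(\cdots)\,\|\rho_\AP^1\|$ via Cauchy--Schwarz and an inverse inequality. Your route is a genuine, if modest, improvement: it sidesteps the inverse inequality entirely, so your constant $C_W$ in fact does not need to depend on $h$ or $k$ (only on $\Dt$, $\sigma_m$, $\sigma_M$, and $\Omega_v$), whereas the paper's does.
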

\begin{proof}[\bf{Proof}]
\par

 (i) Start with any $\phi\in C^\infty_0(\Omega_x)$, then
 \beq
 (q_0, \phi)= - (\rho_0, \phi_x)=-\lim_{\varepsilon\rightarrow0}(\rho_\varepsilon^0, \phi_x)=\lim_{\varepsilon\rightarrow0} (q_\varepsilon^0, \phi).
\eeq
 This result can be extended to any $\phi\in L^2(\Omega_x)$, hence $q_\varepsilon^0\rightharpoonup q_0$  in $L^2(\Omega_x)$ as $\eps\rightarrow 0$, due to the uniform boundedness of $||q_\varepsilon^0||$ in $\varepsilon$ in Assumption 2 and $C^\infty_0(\Omega_x)$ being dense in  $L^2(\Omega_x)$.

\smallskip
(ii) With $W_\varepsilon^0$ weakly convergent to $W_0$ in $L^2(\Omega_x)$, for $W=\rho, q$, we have
$$\lim_{\varepsilon\rightarrow0}\rho_\AP^0=\lim_{\varepsilon\rightarrow0}\pi_h\rho_\varepsilon^0=\lim_{\varepsilon\rightarrow0}\sum_{j=1}^{N_k}(\rho_\varepsilon^0,\Psi_j)\Psi_j=\sum_{j=1}^{N_k}\lim_{\varepsilon\rightarrow0}(\rho_\varepsilon^0,\Psi_j)\Psi_j=\sum_{j=1}^{N_k}(\rho_0,\Psi_j)\Psi_j=\pi_h\rho_0=\rho^0_\LIM,$$
$$\lim_{\varepsilon\rightarrow0}u_\AP^0=\lim_{\varepsilon\rightarrow0}\pi_h(\sigma_s^{-1}q_\varepsilon^0)=\lim_{\varepsilon\rightarrow0}\sum_{j=1}^{N_k}(\sigma_s^{-1}q_\varepsilon^0,\Psi_j)\Psi_j=\sum_{j=1}^{N_k}(\sigma_s^{-1}q_0,\Psi_j)\Psi_j=\pi_h(\sigma_s^{-1}q_0)=u^0_\LIM.$$

Now we consider any $ \zeta\in L^2(\Omega_v)$. With $\lgl  \zeta g_\varepsilon^0\rgl$ weakly convergent to $\lgl  \zeta g_0\rgl$ in $L^2(\Omega_x)$, we have for any $x\in\Omega_x$,
\begin{align}
\lim_{\varepsilon\rightarrow0}\lgl  \zeta, g_\AP^0(x,\cdot)\rgl
&=\lim_{\varepsilon\rightarrow0}\lgl  \zeta, \sum_{j=1}^{N_k} (g_\varepsilon^0,\Psi_j)\Psi_j(x)\rgl=\sum_{j=1}^{N_k}\lim_{\varepsilon\rightarrow0}(\lgl  \zeta g_\varepsilon^0\rgl,\Psi_j)\Psi_j(x) \notag\\
&=\sum_{j=1}^{N_k}(\lgl  \zeta g_0\rgl,\Psi_j)\Psi_j(x)
=\lgl  \zeta, g^0_\LIM(x,\cdot)\rgl.
\end{align}
And \eqref{eq:g_initial_limit2} can be proved similarly.

\smallskip
(iii) Note that 
$$ |||g_\AP^0|||^2=\lgl ||g_\AP^0||^2\rgl=\lgl\sum_{j=1}^{N_k}(g_\varepsilon^0,\Psi_j)^2\rgl\leq |||g_\varepsilon^0|||^2\sum_{j=1}^{N_k}||\Psi_j||^2=N_k|||g_\varepsilon^0|||^2,$$
$$ ||u_\AP^0||=||\pi_h(\sigma_s^{-1} q_\eps^0)||\leq
||\sigma_s^{-1} q_\eps^0||\leq 
\sigma_m^{-1}|| q_\eps^0||.$$
With Assumption 2, we have $\sup_{\varepsilon}|||W_\AP^0|||<\infty, W=g, u $. Similar proof goes to $\rho$.

\smallskip
(iv) 
Based on \eqref{eq:FDG:1T}, one has
\begin{align}
(\rho_\AP^1,\testR)&=\Dt \omega \lgl v^2 \rgl l_h(u^1_\AP,\testR)+(\rho^0_\AP,\testR)
\notag\\
&-\Dt l_h(\lgl v (g^0_\AP+\omega v u_\AP^0)\rgl,\testR)-(\sigma_a \rho_\AP^1,\testR)
,\; \forall \testR\in U_h^k.
\label{eq:b:1}
\end{align}
Take $\testR=\rho_\AP^1$, use $l_h(u^1_\AP,\rho_\AP^1)=-(\sigma_s u^1_\AP,u^1_\AP)$ based on \eqref{eq:prop:1} and Assumption 3 for $\omega$, we get when $\varepsilon<\varepsilon_0(h)$,
 \begin{align}
&|| \rho_\AP^1||^2+(\sigma_a\rho_\AP^1,\rho_\AP^1)+\frac{2\sigma_m\Dt }{3}\lgl v^2\rgl ||u_\AP^1||^2\notag\\
\leq&
(\rho^0_\AP,\rho^1_\AP)-\Dt l_h(\lgl v (g^0_\AP+\omega vu_\AP^0)\rgl,\rho^1_\AP).%\notag\\
%&=(\rho^0_\AP,\rho^1_\AP)-\Dt l_h(\lgl v g^0_\AP\rgl,\rho^1_\AP)-\lgl v^2\rgl Dt().
\label{eq:b:2}
\end{align}
Following some standard steps to  apply Cauchy-Schwarz inequality, Young inequality, inverse inequality (see, e.g. Lemma 3.9 in \cite{jang2014analysis}), based on Assumption 3, we can find a constant $C(k, \Dt, h, \Omega_v)$ such that
\begin{align}
&|(\rho^0_\AP,\rho^1_\AP)-\Dt l_h(\lgl v (g^0_\AP+\omega vu_\AP^0,\rho^1_\AP)\rgl|\notag\\
\leq &C(k, \Dt, h, \Omega_v)\left(||\rho_\AP^0||+|||g_\AP^0|||+||u_\AP^0 ||\right)||\rho_\AP^1||.
\label{eq:b:3}
\end{align}
Combining \eqref{eq:b:2}-\eqref{eq:b:3}, with $\sigma_a(x)\geq 0$, we obtain
\begin{align*}
&\sup_{0<\varepsilon<\eps_0(h)}  || \rho_\AP^1||\leq C(k, \Dt, h, \Omega_v)\sup_\varepsilon(||\rho_\AP^0||+|||g_\AP^0|||+||u_\AP^0 ||) <\infty,\\
&\sup_{0<\varepsilon<\eps_0(h)} || u_\AP^1||\leq \sqrt{\frac{3}{2\sigma_m\Dt\lgl v^2\rgl}}C(k, \Dt, h, \Omega_v)\sup_\varepsilon(||\rho_\AP^0||+|||g_\AP^0|||+||u_\AP^0 ||) <\infty.
\end{align*}
\end{proof}

%\bigskip
We are ready to prove  Theorem \ref{thm:AP} on the AP property of the IMEX1-LDG method. 
%%%%%%%%%%%%%%%%%%%%%%%%%%%
%%  proof of the theorem %%
%%%%%%%%%%%%%%%%%%%%%%%%%%%
\begin{proof}[\bf{Proof of Theorem \ref{thm:AP}.}] Let the mesh size $h$ be fixed.

{\sf Step 1:}   we first show that 
$\sup_{0<\varepsilon<\varepsilon_0(h)}||U_\AP^n||<\infty$
for any $\Dt$, $n\geq 1$, where  $W=\rho, g, q, u$. 
First note that when $\eps<\eps_0(h)$, from Assumption 3, we have $2>\omega>\frac{2}{3}$ and $\mu=\frac{3}{4}\in (\frac{1}{2\omega}, 1]$. Based on the $\mu$-stability result in Theorem \ref{thm:mu}, we have
\begin{align}
&||\rho_\AP^{n+1}||^2+\varepsilon^2|||g_\AP^{n}|||^2+\Dt\sigma_m\left( \frac{1}{4}|||g_\AP^{n}|||^2+\frac{2}{3 }\lgl v^2 \rgl ||u_\AP^{n+1}||^2\right)
\notag\\
\leq& E_{h,\mu=\frac{3}{4}}^{n+1}\leq  E_{h,\mu=\frac{3}{4}}^{n}\leq\cdots
\leq E_{h,\mu=\frac{3}{4}}^1
\notag\\
\leq& ||\rho_\AP^1||^2+\varepsilon^2|||g_\AP^0|||^2+\Dt\sigma_M \left( \frac{1}{4}|||g_\AP^{0}|||^2+2\lgl v^2 \rgl ||u_\AP^1||^2\right).
\label{eq:prove-AP} \end{align}
Moreover from \eqref{eq:FDG:1T:c}, we have $||q_\AP^n||^2=(\sigma_s u_\AP^n,q_\AP^n)$, hence  $||q_\AP^n||\leq \sigma_M ||u_\AP^n||$. In combination of Lemma \ref{lem:AP},  the finiteness of $\sup_{0<\varepsilon<\varepsilon_0(h)}||W_\AP^n||$, $\forall n\geq 1$ follows for  $W=\rho, g, q, u$.

\smallskip
{\sf Step 2:}
With Lemma \ref{lem:AP}, we only need to establish \eqref{eq:f:lim}  for any  $n\geq 1$. This is equivalent to show that for any given sequence $\{\varepsilon_m\}_{m=1}^{\infty}$, satisfying  $\lim_{m\rightarrow \infty} \varepsilon_m=0$ (we no longer emphasize that $\varepsilon$ considered here is bounded above by $\varepsilon_0(h)$),  we have
\begin{subequations}
\label{eq:aa:1}
\begin{align}
\label{eq:aa:1-1}
& \lim_{m\rightarrow \infty}W_{\varepsilon_m, \Dt, h}^n  =W_{\Dt, h}^n, \quad W=\rho, q, u,\\
& \lim_{m\rightarrow \infty}\lgl \zeta, g_{\varepsilon_m, \Dt, h}^n(x,\cdot)\rgl =\lgl \zeta, g_\LIM^n(x, \cdot)\rgl,\quad \forall \zeta\in L^2(\Omega_v),\;\;\forall x\in \Omega_x,
\label{eq:aa:1-2}\\
& \lim_{m\rightarrow \infty}\lgl \zeta, (g_{\varepsilon_m, \Dt, h}^n, \psi) \rgl =\lgl \zeta, (g_\LIM^n, \psi)\rgl,\quad \forall \zeta\in L^2(\Omega_v), \;\;\forall \psi\in L^2(\Ox),
\label{eq:aa:1-3}
\end{align}
\end{subequations}
 for some $W_\LIM^n\in U_h^k$, with $W=\rho, q, u$,  and $g_\LIM^n\in G_h^k$, $\forall n\geq 1$.
   Let $W$ be any of $\rho, q, u$. Given that $U_h^k$ is finite dimensional,  the finiteness of $\sup_m ||W_{\varepsilon_m, \Dt, h}^n||$ from  {\sf Step 1} implies that there is a subsequence $\{W_{\varepsilon_{m_r}, \Dt, h}^n\}_{r=1}^{\infty}$ converging in $U_h^k$ under {\em any} norm as $r\rightarrow \infty$.  Let the limit be 
   \beq
   \label{eq:aa:2}
    W_{\Dt, h}^n=\lim_{r\rightarrow \infty}W_{\varepsilon_{m_r}, \Dt, h}^n, \quad W=\rho, q, u.
    \eeq

We now turn to $\{g_{\varepsilon_m, \Dt, h}^n\}_{m=1}^\infty$. 
Note that each $g^n_{\varepsilon_m, \Dt, h}$ can be written as $g^n_{\varepsilon_m, \Dt, h}(x,v)=\sum_{j=1}^{N_k}\alpha_{\varepsilon_m}^{(j)}(v)\Psi_j(x)$, with
$||| g^n_{\varepsilon_m}|||=\left(\sum_{j=1}^{N_k} ||\alpha_{\varepsilon_m}^{(j)}||^2_{L^2(\Omega_v)}\right)^{1/2}$. This, in addition to the finiteness of
$\sup_m |||g_{\varepsilon_m, \Dt, h}^n|||$ in {\sf Step 1},  indicates that $\sup_r ||\alpha_{\varepsilon_{m_r}}^{(j)}||^2_{L^2(\Omega_v)}$  is bounded for any $j=1,\cdots, N_k$. As a Hilbert space, $L^2(\Omega_v)$ is weakly sequentially compact, that is, $\{\alpha_{\varepsilon_{m_r}}^{(j)}\}_{r=1}^\infty$ has a subsequence which is weakly convergent in $L^2(\Omega_v)$. Without loss of generality, this subsequence is still denoted as $\{\alpha_{\varepsilon_{m_r}}^{(j)}\}_{r=1}^\infty$, and the weak limit when $r\rightarrow \infty$ is denoted as $\alpha_0^{(j)}\in L^2(\Omega_v)$, $\forall j$. We now define
$g_{\Dt, h}^n(x,v)=\sum_{j=1}^{N_k}\alpha_0^{(j)}(v)\Psi_j(x)$.
It is clear that
{
$g_{\Dt, h}^n\in G_h^k$.
}
For any $\zeta\in L^2(\Omega_v)$, and any $x\in\Omega_x$,
\begin{equation}
\label{eq:aa:3}
\lim_{r\rightarrow \infty}\lgl \zeta, g_{\varepsilon_{m_r}, \Dt, h}^n(x,\cdot)\rgl=\sum_{j=1}^{N_k} \left(\lim_{r\rightarrow\infty}\langle \zeta, \alpha_{\varepsilon_{m_r}}^{(j)}\rangle \right)\Psi_j(x)=\sum_{j=1}^{N_k} \langle \zeta, \alpha_0^{(j)}\rangle \Psi_j(x)=\lgl \zeta, g_{\Dt, h}^n(x,
\cdot)\rgl.
\end{equation}
Furthermore, we have $\forall \zeta\in L^2(\Omega_v),\;\forall \psi\in L^2(\Ox),$
\begin{equation}
\label{eq:aa:4}
\lim_{r\rightarrow \infty}\lgl \zeta, (g_{\varepsilon_{m_r}, \Dt, h}^n, \psi)\rgl
=\sum_{j=1}^{N_k} \left(\lim_{r\rightarrow\infty}\langle \zeta, \alpha_{\varepsilon_{m_r}}^{(j)}\rangle \right)(\Psi_j, \psi)
=\lgl \zeta, (g_{\Dt, h}^n, \psi)\rgl=(\lgl \zeta g_{\Dt, h}^n\rgl, \psi).
\end{equation}

Use \eqref{eq:aa:2}-\eqref{eq:aa:4} for $n\geq 1$ as well as the similar result in Lemma \ref{lem:AP} for $n=0$, with   $\zeta$ taken when needed as
$v$,  $v {\bf 1}_{\{v>0\}}$,  $v{\bf 1}_{\{v<0\}}$, $v\zeta(v)$, $v\zeta(v){\bf 1}_{\{v>0\}}$,  $v\zeta(v){\bf 1}_{\{v<0\}}$, also use the property \eqref{w:prop:0} for $\omega$, 
 we have for any $n\geq 0$,
\begin{subequations}
\label{eq:aa:5}
\begin{align}
&
 \lim_{r\rightarrow \infty} l_h(\lgl v (g_{\varepsilon_{m_r}, \Dt, h}^n+ \omega|_{\eps=\eps_{m_r}}v u_{\varepsilon_{m_r},\Dt, h}^n\rgl ,\testR)=
 l_h(\lgl v (g_\LIM^n+v u_{\LIM}^n)\rgl ,\testR) ,\quad \forall \testR\in U_h^k, \\
&
\lim_{r\rightarrow \infty} \lgl \zeta, b_{h,v}(g_{\varepsilon_{m_r}, \Dt, h}^n,\testG)\rgl=\lgl \zeta, b_{h,v}(g_\LIM^n,\testG)\rgl,\quad \forall \zeta\in L^2(\Omega_v), \; \forall \testG\in U_h^k.
\end{align}
\end{subequations}

Now with \eqref{eq:aa:2}-\eqref{eq:aa:5} and  Lemma \ref{lem:AP} for the initial data, the numerical scheme  \eqref{eq:FDG:1T} as  $r\rightarrow \infty$  becomes,
$\forall \testQ,\;\testU,\;\testR\,\;\testG \in U_h^k$
\begin{subequations}
\label{eq:aa:6}
\begin{align}
&(q_\LIM^{n+1},\testQ)+d_h(\rho_\LIM^{n+1},\testQ)=0,
\label{eq:aa:6.1}\\
&(\sigma_s u_\LIM^{n+1},\testU)=(q_\LIM^{n+1},\testU),
\label{eq:aa:6.4}\\
&(\frac{\rho_\LIM^{n+1}-\rho_\LIM^n}{\Dt},\testR)+l_h(\lgl v(g_\LIM^n+v u_\LIM^n)\rgl,\testR)=\lgl v^2\rgl l_h(u_\LIM^{n+1},\testR)-(\sigma_a \rho_\LIM^{n+1},\testR),
\label{eq:aa:6.2}\\
&(\lgl \zeta\sigma_s g_\LIM^{n+1}\rgl ,\testG)=\lgl \zeta v\rgl d_h(\rho_\LIM^{n+1},\testG),\quad\forall \zeta\in L^2(\Omega_v),
\label{eq:aa:6.3}
\end{align}
\end{subequations}
for $n\geq 0$.
Furthermore, \eqref{eq:aa:6.1} and \eqref{eq:aa:6.3} lead to
\beq 
\lgl  (\pi_h(\sigma_s g_\LIM^n)+vq_\LIM^n, \zeta\psi) \rgl=0
\quad \forall \zeta\in L^2(\Omega_v),
 \;\psi \in U_h^k, \;\; n\geq 1.
\label{eq:aa:7}
\eeq
With $g_\LIM^{n}\in G_h^k$ hence $\pi_h(\sigma_s g_\LIM^n)+vq_\LIM^n\in L^2(\Ov)\times U_h^k$, \eqref{eq:aa:7} equivalently becomes
\beq
\pi_h(\sigma_s g_\LIM^{n})=- v q_\LIM^{n},\quad n\geq 1.
\label{eq:aa:8}
\eeq
Moreover, from \eqref{eq:aa:6.4} and \eqref{eq:aa:8}, one can get $g_\LIM^n+vu^n_\LIM=0, n\geq 1$, as shown  below.
\begin{align*}
0\leq \sigma_m ||| g_\LIM^n+vu^n_\LIM |||^2
&\leq \left\lgl \left(\sigma_s (g_\LIM^n+vu^n_\LIM), g_\LIM^n+vu^n_\LIM\right)\right\rgl\\
&=\left\lgl\left( -vq_\LIM^n+vq_\LIM^n,g_\LIM^n+vu^n_\LIM\right)\right\rgl=0.
\end{align*}

Compare \eqref{eq:aa:6} and \eqref{eq:aa:8} with what we want in \eqref{eq:lim:s}, one also needs to have 
$g_\LIM^0+vu^0_\LIM=0$. This can be argued based on the initial data being  well-prepared in Assumption 1. To see this, $\forall \zeta\in L^2(\Ov), \forall \psi\in U_h^k$, we proceed as follows,
\begin{align}
0&=\lim_{\varepsilon\rightarrow 0}\Big(\lgl \zeta ( g_\varepsilon^{0}+v \sigma_s^{-1}q_\varepsilon^{0})\rgl,  \psi\Big)=\lim_{\varepsilon\rightarrow 0}\Big((\lgl \zeta g_\varepsilon^{0}\rgl, \psi)+\lgl v\zeta\rgl (q_\varepsilon^{0}, \sigma_s^{-1}\psi)\Big)\notag \\
&=(\lgl \zeta g_0\rgl, \psi)+\lgl v\zeta\rgl (q_0, \sigma_s^{-1}\psi)
=(\lgl  \zeta  g_\LIM^{0}\rgl, \psi)+ \lgl \zeta v\rgl ( u_\LIM^{0}, \psi),
\label{eq:aa:7.a}
\end{align}
and this gives $\lgl \zeta (g_\LIM^{0}+vu_\LIM^{0}, \psi)\rgl=0$. Note that $g_\LIM^{0}+vu_\LIM^{0}\in L^2(\Ov)\times U_h^k$, therefore \eqref{eq:aa:7.a} is indeed $g_\LIM^{0}+vu_\LIM^{0}=0$, and we can conclude the limiting scheme in   \eqref{eq:lim:s}.

It is easy to see the limiting scheme \eqref{eq:lim:s} is a consistent discretization for \eqref{eq:AP1:3.1.1}. Its stability can be obtained similarly as Lemma \ref{lem:sol}, with 
\begin{align}
&||\rho^{n+1}_\LIM||^2+\Dt\lgl v^2\rgl ||u^{n+1}_\LIM||_s^2+(\sigma_a \rho^{n+1}_\LIM, \rho^{n+1}_\LIM)=(\rho^{n}_\LIM, \rho^{n+1}_\LIM)\notag\\
\Rightarrow &\frac{1}{2}||\rho^{n+1}_\LIM||^2+\Dt\lgl v^2\rgl \sigma_m ||u^{n+1}_\LIM||^2\leq \frac{1}{2}||\rho^{n}_\LIM||^2\leq \cdots\leq \frac{1}{2}||\rho^{0}_\LIM||^2\leq \frac{1}{2}||\rho_0||^2. 
\end{align}

Finally, with a standard contradiction argument and the uniqueness of the solution to the system
\eqref{eq:lim:s} (see Lemma \ref{lem:sol}), we conclude the limiting functions $\rho^n_\LIM,  q^n_\LIM, g^n_\LIM, u_\LIM^n$ are unique, and  \eqref{eq:aa:1} holds for the entire sequence. In the case that the velocity space $\Omega_v$ is discrete, the analysis related to the convergence of $g_\AP^n(\cdot, v)$ for each $v$ is just as simple as that for $\rho_\AP^n$ and $q_\AP^n$, and the convergence is in a strong sense as in \eqref{eq:aa:-4}.

\end{proof}

\renewcommand\refname{References}
\bibliographystyle{plain}
\bibliography{bib_peng}

\begin{thebibliography}{10}

\bibitem{boscarino2014high}
Sebastiano Boscarino, Philippe~G LeFloch, and Giovanni Russo.
\newblock High-order asymptotic-preserving methods for fully nonlinear
  relaxation problems.
\newblock {\em SIAM Journal on Scientific Computing}, 36(2):A377--A395, 2014.

\bibitem{boscarino2013imex}
Sebastiano Boscarino, Lorenzo Pareschi, and Giovanni Russo.
\newblock Implicit-explicit {R}unge-{K}utta schemes for hyperbolic systems and
  kinetic equations in the diffusion limit.
\newblock {\em SIAM Journal on Scientific Computing}, 35(1):A22--A51, 2013.

\bibitem{caflisch1997uniformly}
Russel~E Caflisch, Shi Jin, and Giovanni Russo.
\newblock Uniformly accurate schemes for hyperbolic systems with relaxation.
\newblock {\em SIAM Journal on Numerical Analysis}, 34(1):246--281, 1997.

\bibitem{cockburn1998local}
Bernardo Cockburn and Chi-Wang Shu.
\newblock The local discontinuous {G}alerkin method for time-dependent
  convection-diffusion systems.
\newblock {\em SIAM Journal on Numerical Analysis}, 35(6):2440--2463, 1998.

\bibitem{degond2011asymptotic}
Pierre Degond.
\newblock Asymptotic-preserving schemes for fluid models of plasmas.
\newblock {\em arXiv preprint arXiv:1104.1869}, 2011.

\bibitem{dimarco2014implicit}
Giacomo Dimarco, Lorenzo Pareschi, and Vittorio Rispoli.
\newblock Implicit-explicit {R}unge-{K}utta schemes for the
  {B}oltzmann-{P}oisson system for semiconductors.
\newblock {\em Communications in Computational Physics}, 15(5):1291--1319,
  2014.

\bibitem{filbet2013analysis}
Francis Filbet and Am{\'e}lie Rambaud.
\newblock Analysis of an asymptotic preserving scheme for relaxation systems.
\newblock {\em ESAIM: Mathematical Modelling and Numerical Analysis},
  47(2):609--633, 2013.

\bibitem{golse1999convergence}
Fran{\c{c}}ois Golse, Shi Jin, and C~David Levermore.
\newblock The convergence of numerical transfer schemes in diffusive regimes i:
  Discrete-ordinate method.
\newblock {\em SIAM journal on numerical analysis}, 36(5):1333--1369, 1999.

\bibitem{hesthaven2007nodal}
Jan~S Hesthaven and Tim Warburton.
\newblock {\em Nodal discontinuous Galerkin methods: algorithms, analysis, and
  applications}.
\newblock Springer Science \& Business Media, 2007.

\bibitem{hu2019uniform}
Jingwei Hu and Ruiwen Shu.
\newblock On the uniform accuracy of implicit-explicit backward differentiation
  formulas (imex-bdf) for stiff hyperbolic relaxation systems and kinetic
  equations.
\newblock {\em arXiv preprint arXiv:1912.00559}, 2019.

\bibitem{jang2014analysis}
Juhi Jang, Fengyan Li, Jing-Mei Qiu, and Tao Xiong.
\newblock Analysis of asymptotic preserving {DG}-{IMEX} schemes for linear
  kinetic transport equations in a diffusive scaling.
\newblock {\em SIAM Journal on Numerical Analysis}, 52(4):2048--2072, 2014.

\bibitem{AP1}
Juhi Jang, Fengyan Li, Jing-Mei Qiu, and Tao Xiong.
\newblock High order asymptotic preserving {DG}-{IMEX} schemes for
  discrete-velocity kinetic equations in a diffusive scaling.
\newblock {\em Journal of Computational Physics}, 281:199--224, 2015.

\bibitem{jin2010asymptotic}
Shi Jin.
\newblock Asymptotic preserving ({AP}) schemes for multiscale kinetic and
  hyperbolic equations: a review.
\newblock {\em Lecture Notes for Summer School on Methods and Models of Kinetic
  Theory (M\&MKT), Porto Ercole (Grosseto, Italy)}, 2010.

\bibitem{jin1998diffusive}
Shi Jin, Lorenzo Pareschi, and Giuseppe Toscani.
\newblock Diffusive relaxation schemes for multiscale discrete-velocity kinetic
  equations.
\newblock {\em SIAM Journal on Numerical Analysis}, 35(6):2405--2439, 1998.

\bibitem{klar1998asymptotic}
Axel Klar.
\newblock An asymptotic-induced scheme for nonstationary transport equations in
  the diffusive limit.
\newblock {\em SIAM journal on numerical analysis}, 35(3):1073--1094, 1998.

\bibitem{klar2002uniform}
Axel Klar and Andreas Unterreiter.
\newblock Uniform stability of a finite difference scheme for transport
  equations in diffusive regimes.
\newblock {\em SIAM Journal on Numerical Analysis}, 40(3):891--913, 2002.

\bibitem{Lemou_No_BC}
Mohammed Lemou and Luc Mieussens.
\newblock A new asymptotic preserving scheme based on micro-macro formulation
  for linear kinetic equations in the diffusion limit.
\newblock {\em SIAM Journal on Scientific Computing}, 31(1):334--368, 2008.

\bibitem{liu2010analysis}
Jian-Guo Liu and Luc Mieussens.
\newblock Analysis of an asymptotic preserving scheme for linear kinetic
  equations in the diffusion limit.
\newblock {\em SIAM Journal on Numerical Analysis}, 48(4):1474--1491, 2010.

\bibitem{liu2004boltzmann}
Tai-Ping Liu and Shih-Hsien Yu.
\newblock Boltzmann equation: micro-macro decompositions and positivity of
  shock profiles.
\newblock {\em Communications in mathematical physics}, 246(1):133--179, 2004.

\bibitem{naldi1998numerical}
Giovanni Naldi and Lorenzo Pareschi.
\newblock Numerical schemes for kinetic equations in diffusive regimes.
\newblock {\em Applied mathematics letters}, 11(2):29--35, 1998.

\bibitem{peng2020thesis}
Zhichao Peng.
\newblock {\em Structure-preserving discontinuous Galerkin methods for
  multi-scale kinetic transport equations and nonlinear optics models}.
\newblock PhD thesis, Rensselaer Polytechnic Institute, 2020.

\bibitem{peng2018stability}
Zhichao Peng, Yingda Cheng, Jing-Mei Qiu, and Fengyan Li.
\newblock Stability-enhanced {AP} {IMEX}-{LDG} schemes for linear kinetic
  transport equations under a diffusive scaling.
\newblock 2018.

\bibitem{pomraning1973equations}
Gerald~C. Pomraning.
\newblock {The equations of radiation hydrodynamics}.
\newblock {\em International Series of Monographs in Natural Philosophy,
  Oxford: Pergamon Press}, 1973.

\bibitem{Matt_Li}
Matthew~A Reyna and Fengyan Li.
\newblock Operator bounds and time step conditions for the {DG} and central
  {DG} methods.
\newblock {\em Journal of Scientific Computing}, 62(2):532--554, 2015.

\end{thebibliography}
\end{document}